\numberwithin{equation}{section}
\useunder{\uline}{\ul}{}
          \renewcommand{\ifcommandkey}[1]{%
            \csname @\expandafter \expandafter \expandafter 
            \expandafter \expandafter \expandafter  \expandafter
            \kcmd@nbk \commandkey {#1}//{first}{second}//oftwo\endcsname } } 
\newkeycommand\funcU[delta=\delta,m=m,q=q,H=H][1]{
  U_{\commandkey{q},\commandkey{m}} \!\left( #1 ; \commandkey{delta},\commandkey{H} \right) }
\newtheorem{theorem}{Theorem}[section]
\newtheorem{lemma}[theorem]{Lemma}
\newtheorem{prop}[theorem]{Proposition}
\newtheorem{cor}[theorem]{Corollary}
\theoremstyle{definition}
\newtheorem{definition}[theorem]{Definition}
\newtheorem{example}[theorem]{Example}
\newtheorem{conjecture}[theorem]{Conjecture}
\theoremstyle{remark}
\numberwithin{equation}{section}
\renewcommand{\mod}[1]{{\ifmmode\text{\rm\ (mod~$#1$)}\else\discretionary{}{}{\hbox{ }}\rm(mod~$#1$)\fi}}
 \renewcommand{\epsilon}{\varepsilon}
\newcommand{\CC}{{\mathbb C}}
\newcommand{\RR}{\mathbb R}
\newcommand{\Zchi}{{\mathcal Z(\chi)}}
\newcommand{\Zero}{{\mathcal Z}}
\DeclareMathOperator{\Arg}{Arg}
\renewcommand{\phi}{\varphi}
\begin{document}

\title[Bounding $N(T,\chi)$]{Counting Zeros of Dirichlet
  $L$-Functions \\ \today\ \currenttime}
% \author[Bennett, Martin, O'Bryant and Rechnitzer]{Mike Bennett, Greg
% Martin, Kevin O'Bryant, and Andrew Rechnitzer}
\author{Michael A. Bennett} \address{Department of Mathematics \\
  University of British Columbia \\ Room 121, 1984 Mathematics Road \\
  Vancouver, BC, Canada V6T 1Z2} \email{bennett@math.ubc.ca}
\author{Greg Martin} \address{Department of Mathematics \\ University
  of British Columbia \\ Room 121, 1984 Mathematics Road \\ Vancouver,
  BC, Canada V6T 1Z2} \email{gerg@math.ubc.ca} \author{Kevin O'Bryant}
\address{Department of Mathematics \\ City University of New York,
  College of Staten Island and The Graduate Center \\ 2800 Victory
  Boulevard \\ Staten Island, NY, USA 10314}
\email{kevin.obryant@csi.cuny.edu} \author{Andrew Rechnitzer}
\address{Department of Mathematics \\ University of British Columbia
  \\ Room 121, 1984 Mathematics Road \\ Vancouver, BC, Canada V6T 1Z2}
\email{andrewr@math.ubc.ca} \subjclass[2010]{Primary 11N13, 11N37,
  11M20, 11M26; secondary 11Y35, 11Y40}
% -----------------------------------------------------------------------------
\begin{abstract}
  We give explicit upper and lower bounds for $N(T,\chi)$, the number of zeros of a
  Dirichlet $L$-function with character $\chi$ and height at most $T$. Suppose that $\chi$ has conductor $q>1$, and that $T\geq 5/7$.
   If $\ell=\log\frac{q(T+2)}{2\pi}> 1.567$,  then
    \begin{equation*}
      \left| N(T,\chi) - \left( \frac{T}{\pi} \log\frac{qT}{2\pi e} -\frac{\chi(-1)}{4}\right) \right| 
      \le 0.22737 \ell + 2 \log(1+\ell) - 0.5.
    \end{equation*}
We give slightly stronger results for small $q$ and $T$. Along the way, we prove a new bound on $|L(s,\chi)|$ for $\sigma<-1/2$.
\end{abstract}
% -----------------------------------------------------------------------------
\maketitle
\tableofcontents

\thispagestyle{empty}

\section{Statement  of  Results}\label{sec-introduction}
For any Dirichlet character $\chi$, the Dirichlet $L$-function is defined  by
\begin{equation} \label{L function def} 
  L(s,\chi) \coloneqq  \sum_{n=1}^{\infty} \frac{\chi (n)}{n^s}
\end{equation}
when $\Re s>1$, and by analytic continuation for other complex
numbers~$s$. We adopt the usual convention of letting $\rho = \beta+i\gamma$ denote a zero of $L(s,\chi)$, so that
$\beta=\Re\rho$ and $\gamma=\Im\rho$ by definition. We let
\begin{equation} \label{Zchi def} 
   \Zchi \coloneqq  \{ \rho \in\CC \colon 0<\beta < 1, \, L(\rho,\chi)=0 \}
\end{equation}
be the set of zeros of $L(s,\chi)$ inside the critical strip (technically a multiset, since multiple zeros, if any, are included
according to their multiplicity). Notice in particular that the set $\Zchi$ does not include any zeros on the imaginary axis, even when
$\chi$ is an imprimitive character; consequently, if $\chi$ is induced by another character $\chi^*$, then $\Zchi=\mathcal{Z}(\chi^*)$. If $\bar\chi$ is the conjugate character to $\chi$, then $\Zchi = \overline{\mathcal{Z}(\bar\chi)}$.

We write $N(T,\chi)$ for the standard counting function for zeros of $L(s,\chi)$ with $0 < \beta < 1$ and $|\gamma| \leq T$. In other words,
$$
N(T,\chi) \coloneqq  \#\{ \rho\in\Zchi \colon |\gamma| \le T\},
$$
counted with multiplicity if there are any multiple zeros. The primary aim of this work is to provide explicit upper and lower bounds on $N(T,\chi)$
in terms of $\chi(-1)$, the conductor $q$ and the height $T$.

\begin{theorem}\label{thm:Main}
  Let $\chi$ be a character with conductor $q > 1$ and let $T\geq 5/7$. Set $\ell\coloneqq \log\frac{q(T+2)}{2\pi}$. If $\ell  \leq 1.567$, then $N(T,\chi)=0$. If $\ell > 1.567$,  then
  \begin{equation*}
    \left| N(T,\chi) - \left( \frac{T}{\pi} \log\frac{qT}{2\pi e} -\frac{\chi(-1)}{4}\right) \right| 
    \le 0.22737 \ell + 2 \log(1+\ell) - 0.5.
  \end{equation*}
\end{theorem}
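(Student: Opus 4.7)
The plan is to use the argument principle applied to the completed $L$-function, following the classical Backlund strategy adapted to Dirichlet $L$-functions. Without loss of generality $\chi$ is primitive, since $\Zchi=\mathcal Z(\chi^*)$. Form the completed function $\xi(s,\chi)\coloneqq(q/\pi)^{(s+a)/2}\Gamma\bigl((s+a)/2\bigr)L(s,\chi)$, where $a\in\{0,1\}$ is chosen so that $\chi(-1)=(-1)^a$; then $\xi$ is entire of order one, its zeros in $0<\Re s<1$ are precisely $\Zchi$, and it satisfies a functional equation $\xi(s,\chi)=W(\chi)\xi(1-s,\bar\chi)$ with $|W(\chi)|=1$. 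The first step is to apply the argument principle to a rectangle $R$ with vertices $\sigma_0\pm iT$ and $1-\sigma_0\pm iT$ for some $\sigma_0>1$ (the precise choice of $\sigma_0$ will influence the explicit constants), so that $2\pi N(T,\chi)=\Delta_{\partial R}\arg\xi(s,\chi)$; a tiny perturbation of $T$ handles any boundary zero.

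Next, the functional equation together with the Schwarz-reflection symmetry $\xi(\bar s,\bar\chi)=\overline{\xi(s,\chi)}$ lets one re-express the change of argument on the left half of $\partial R$ in terms of $\arg\xi(s,\bar\chi)$ on the right half; combined with $N(T,\chi)=N(T,\bar\chi)$, this yields an identity expressing $N(T,\chi)$ purely through $\arg\xi$ along the right vertical segment $\Re s=\sigma_0$ and the two horizontal pieces $\Im s=\pm T$, $\tfrac12\le\Re s\le\sigma_0$. The contribution from $(q/\pi)^{(s+a)/2}$ is explicit and linear in $T$; the contribution from $\Gamma\bigl((s+a)/2\bigr)$ on the horizontal pieces, evaluated via Stirling's formula with an explicit remainder, supplies precisely the main term $\tfrac{T}{\pi}\log\tfrac{qT}{2\pi e}-\tfrac{\chi(-1)}{4}$, up to an error of size $O(1/T)$.

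The main obstacle is controlling $\Delta\arg L(s,\chi)$ on the vertical segment $\Re s=\sigma_0$, $|\Im s|\le T$. On that line $|L|$ is bounded away from $0$ and $\infty$, but $\arg L$ can oscillate. The classical Backlund device bounds $|\Delta\arg L|$ by $\pi$ times the number of real zeros, in the segment $[-T,T]$, of the function $g(y)\coloneqq\Re L(\sigma_0+iy,\chi)=\tfrac12\bigl(L(\sigma_0+iy,\chi)+\overline{L(\sigma_0-iy,\bar\chi)}\bigr)$. Treated as a holomorphic function of a complex variable via the right-hand expression, Jensen's formula on a disk of radius slightly exceeding $T$, centered at $\sigma_0$, bounds that zero count in terms of $\log\max_{|z-\sigma_0|=R}|g(z)|-\log|g(\sigma_0)|$. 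Since the disk necessarily reaches into the half-plane $\Re s<-\tfrac12$, an explicit upper bound for $|L(s,\chi)|$ there is required; supplying such a bound is the new ingredient flagged in the abstract, and optimizing the disk radius (and the value of $\sigma_0$) against that bound produces the specific constants $0.22737$ and the $2\log(1+\ell)-0.5$ piece of the final error, with the $\log(1+\ell)$ reflecting the logarithmic growth of $|L|$ to the left of the critical line.

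Finally, for the regime $\ell\le 1.567$ the inequality forces $q(T+2)\le 2\pi e^{1.567}\approx 30.2$, so only a short list of pairs $(q,T)$ needs checking; combining numerical computation of the lowest-lying zeros of Dirichlet $L$-functions of small conductor with the hypothesis $T\ge 5/7$ forces $N(T,\chi)=0$ in each case. The real difficulty is concentrated in the third paragraph—obtaining an honest explicit bound for $|L(s,\chi)|$ when $\Re s<-\tfrac12$ and then extracting the sharpest admissible constants from the Jensen/Backlund estimate—while the remaining ingredients, though technical, are standard bookkeeping.
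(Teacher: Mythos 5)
Your first two paragraphs are essentially correct and match the paper's Section~\ref{sec-main}: reduce to primitive $\chi$, apply the argument principle to $\Lambda(s,\chi)$ over the rectangle with vertices $\sigma_1\pm iT$, $1-\sigma_1\pm iT$, use the functional equation to fold the left half onto the right, and extract the main term $\frac{T}{\pi}\log\frac{qT}{2\pi e}-\frac{\chi(-1)}{4}$ from Stirling's formula with explicit remainder. The final paragraph about small $\ell$ also matches: the paper does handle $\ell\leq 1.567$ (and indeed all $\ell\leq 6$) by direct rigorous computation of zeros for small conductors.

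The third paragraph — which you correctly flag as the crux — has the Jensen/Backlund machinery aimed at the wrong segment, and this is a fatal error. You apply it to the \emph{vertical} piece $\Re s=\sigma_0$, $|\Im s|\leq T$, but on that segment $L$ is given by a convergent Euler product, $\log L$ is single-valued and bounded in modulus by $\log\zeta(\sigma_0)$, and consequently $\bigl|\Delta\arg L\bigr|\leq 2\log\zeta(\sigma_0)$ trivially (this is exactly Proposition~\ref{C2 prop} in the paper). No Jensen argument is needed there. The genuine difficulty is the two \emph{horizontal} segments $\Im s=\pm T$, $\tfrac12\leq\Re s\leq\sigma_1$, where $\Re s$ crosses the critical strip and $\arg L$ can wind arbitrarily many times. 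That is where Backlund's device is applied: one considers $f(\sigma)=\Re L(\sigma+iT,\chi)$ as a function of the \emph{real} variable $\sigma$, extends it to the holomorphic function $f(s)=\tfrac12\bigl(L(s+iT,\chi)+L(s-iT,\bar\chi)\bigr)$, and applies Jensen on a disk centered at a real $c$ slightly to the right of $1$ with \emph{fixed, $T$-independent radius} $r=O(1)$ (in the paper $r\in(1.06,1.65)$), so that the disk just reaches past $\Re s=\tfrac12$ and into $\Re s\in(-3/2,-1/2)$; this is precisely why the new bound on $|L(s,\chi)|$ for $\sigma<-1/2$ is needed, and only for $\sigma$ moderately negative.

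Your version, with the disk centered at $\sigma_0$ and radius slightly exceeding $T$, cannot work: the boundary circle then has points with $\Re s\approx\sigma_0-T$, where by the convexity/functional-equation bounds $\log|L(s,\chi)|$ is of size $\bigl(\tfrac12-\Re s\bigr)\log(q|s|)\asymp T\log(qT)$. The Jensen integral is therefore $\Theta(T\log qT)$ — the same order as the main term of $N(T,\chi)$ — so the resulting ``error'' would swamp what it is supposed to estimate. The paper avoids this precisely by keeping the Jensen disk small and $T$-independent.

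Two further ingredients of the paper's proof are absent from your sketch and are needed to reach the stated constants. First, the auxiliary function is $f_m(s)=\tfrac12\bigl(L(s+iT,\chi)^m+L(s-iT,\bar\chi)^m\bigr)$ with $m\to\infty$: the $m$th power multiplies the number of real zeros by roughly $m$ and, in the limit, the method captures the \emph{total variation} of $\arg L$ rather than merely its sign-change count, which tightens the constants. Second, and more importantly, the actual ``Backlund's trick'' in the paper (Proposition~\ref{prop:Backlund}) pairs each zero of $f_m$ in $[\tfrac12,\tfrac12+\delta]$ with a mirror zero in $[\tfrac12-\delta,\tfrac12)$ via the functional equation, gaining nearly a factor of two in $\log\bigl(r/(c-\tfrac12)\bigr)$; correctly tracking when this pairing is admissible (the constraints linking $c$, $r$, $\sigma_1$, $\delta$) is exactly where Trudgian's earlier proof went wrong and is the technical heart of the paper. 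As written, your sketch does not address either point, and the main argument as stated would not deliver an error term of size $O(\log qT)$ at all.
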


There have been two earlier papers dedicated to finding explicit bounds for the quantity $N(T,\chi)$, by McCurley~\cite{McCurley} in 1984 and by Trudgian~\cite{Trudgian} in
2015. Both authors gave bounds of the shape
    \begin{equation} \label{C1 C2 def}
    \left| N(T,\chi) - \frac{T}{\pi} \log\frac{qT}{2\pi e}  \right| 
    \le C_1\log qT +C_2
  \end{equation}
for positive constants $C_1$ and~$C_2$.
In McCurley~\cite{McCurley}, which assumes $T\ge1$, these constants $C_1=C_1(\eta)$ and $C_2=C_2(\eta)$ are functions of a parameter $\eta\in(0,1/2]$; for all such values of~$\eta$, one finds that necessarily $C_1(\eta)>{1}/{\pi\log2} > 0.45$.
Trudgian pushed McCurley's techniques further, giving~\cite[Theorem~1]{Trudgian} a table of ten pairs of values $(C_1,C_2)$ under the assumption $T\geq 1$ and ten further pairs under the assumption $T\geq 10$. All
of his pairs have $C_1\geq 0.247$, and in his proof it is asserted that $C_1$ could be made as small as $(\pi\log4)^{-1}\doteq 0.229612$.

Regrettably, Trudgian's paper contains an error that
renders his proof incomplete. In short, the various parameters  introduced in his proofs need to satisfy certain inequalities, and he incorrectly argued that one of
the inequalities was redundant. The same difficulty unfortunately recurs in~\cite{Trudgian0} (where bounds are derived for zeros of the Riemann zeta-function) and in~\cite[Theorem~2]{Trudgian} (devoted to analogous results for Dedekind zeta-functions). On a certain level, the main purpose of the paper at hand is to repair these problems for Dirichlet $L$-functions, motivated by the fact that the authors appealed to~\cite[Theorem~1]{Trudgian} in the course of proving the main results of~\cite{EBPAP}.

Our bound in Theorem~\ref{thm:Main} has a slightly more complicated shape (and uses the offset of $\chi(-1)$) to make the bound as small as possible; however, for any $C_1>0.22737$, it is a simple calculus exercise to calculate a constant~$C_2$ such that Theorem~\ref{thm:Main} implies the bound~\eqref{C1 C2 def}. We can therefore deduce the following corollary of Theorem~\ref{thm:Main} in a straightforward way:
\begin{cor}\label{cor:simple}
  Let $\chi$ be a character with conductor $q>1$. If $T\ge 5/7$, then
    \begin{equation*}
    \left| N(T,\chi) - \frac{T}{\pi} \log\frac{qT}{2\pi e}  \right| 
    \le \min\{0.247\log qT +6.894, 0.298 \log qT+4.358 \}.
  \end{equation*}
\end{cor}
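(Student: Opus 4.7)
The plan is to deduce the corollary from Theorem~\ref{thm:Main} by a routine reparameterization, splitting on whether the theorem's hypothesis $\ell > 1.567$ holds, where $\ell = \log\frac{q(T+2)}{2\pi}$.

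If $\ell \le 1.567$, then $N(T,\chi)=0$ by Theorem~\ref{thm:Main}, and the bound $q(T+2) \le 2\pi e^{1.567}$ together with $q\ge 2$ and $T\ge 5/7$ confines $(q,T)$ to a compact region. A direct maximization shows that $\bigl|\frac{T}{\pi}\log\frac{qT}{2\pi e}\bigr|$ is bounded on this region by a small absolute constant (the maximum occurs near $q=2$, $T \approx 13$, and is less than $2$), which fits comfortably beneath both candidate right-hand sides of the corollary.

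Assume now $\ell > 1.567$. First I would absorb the $\chi(-1)/4$ offset by the triangle inequality, at cost~$1/4$, reducing the task to showing that
\[
0.22737\,\ell + 2\log(1+\ell) - 0.25 \le C_1\log qT + C_2
\]
for each of $(C_1,C_2) \in \{(0.247,6.894),(0.298,4.358)\}$. For this, two elementary inequalities suffice. Since $T \mapsto \log((T+2)/T)$ decreases and $T \ge 5/7$,
\[
\ell \;\le\; \log qT + A, \qquad A := \log\tfrac{19}{5} - \log 2\pi \;<\; 0.
\]
Second, for any $c>0$, single-variable calculus gives $2\log(1+x) \le cx + 2\log(2/c) + c - 2$ for $x \ge 0$; applied at $x=\ell$, this trades the troublesome $\log(1+\ell)$ term for a linear one.

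Combining these inequalities yields
\[
0.22737\,\ell + 2\log(1+\ell) - 0.25 \le (0.22737+c)\log qT + (0.22737+c)A + 2\log(2/c) + c - 2.25.
\]
Choosing $c = 0.01963$ gives $C_1 = 0.247$ and, after numerical evaluation of $A$ and $2\log(2/c)$, the constant $C_2 \le 6.894$; choosing $c = 0.07063$ gives $C_1 = 0.298$ and $C_2 \le 4.358$. The argument is a calculus exercise of exactly the kind anticipated in the paragraph preceding the corollary's statement; the only mildly tedious step is the numerical verification of the two constants, and the only real care needed is in the small-$\ell$ case, which is nevertheless reduced to a bounded optimization.
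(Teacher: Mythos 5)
Your argument is correct, and it is precisely the ``simple calculus exercise'' the paper alludes to without spelling out. The numerics check out: in the $\ell\le 1.567$ case the region is compact with $\bigl|\tfrac{T}{\pi}\log\tfrac{qT}{2\pi e}\bigr| < 2$ (the extremes being about $1.76$ near $(q,T)=(2,13.06)$ and $-1$ at $(q,T)=(2,\pi)$), both dominated by the right-hand sides since $\log qT>0$; and in the $\ell>1.567$ case, with $A=\log\tfrac{19}{5}-\log 2\pi\approx-0.5029$, the choice $c=0.01963$ yields $C_2\ge 6.8931$ and $c=0.07063$ yields $C_2\ge 4.3578$, both within the claimed $6.894$ and $4.358$.
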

Corollary~\ref{cor:simple} improves upon all twenty of Trudgian's claimed pairs as well as upon McCurley's parametric bound.
Figure~\ref{fig:McCurley and Trudgian} shows the $(C_1,C_2)$ pairs implied by McCurley, and the twenty pairs claimed by Trudgian, as well as the $(C_1,C_2)$ pairs implied for $T\geq 1$ by Theorem~\ref{thm:Main}; the two marked points are the two $(C_1,C_2)$ pairs from Corollary~\ref{cor:simple}.

\begin{figure}
  \includegraphics[width=4.5in]{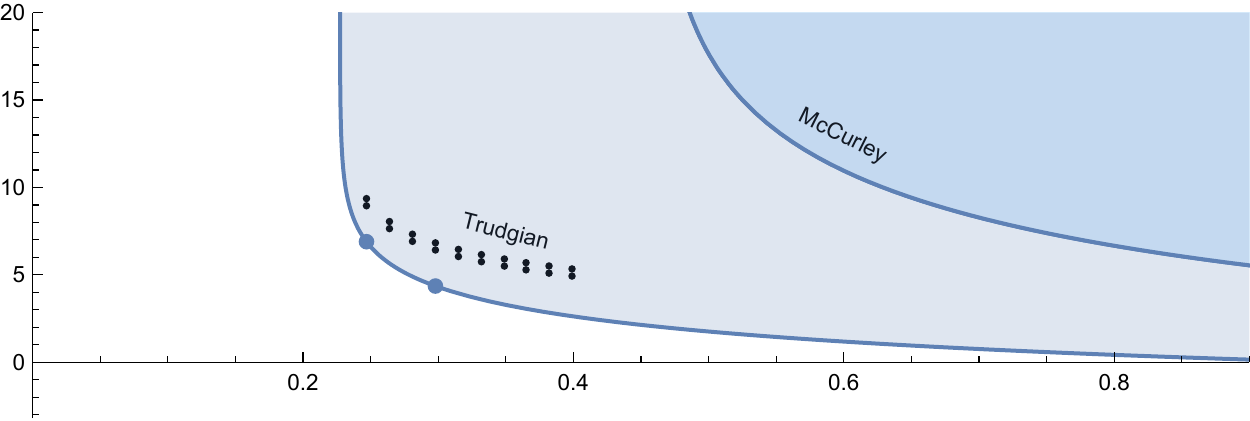}
  \caption{The valid $(C_1,C_2)$ pairs proved by McCurley (upper curve), claimed by Trudgian (twenty points), and implied by Theorem~\ref{thm:Main} (lower curve); the two points on the lower curve represent Corollary~\ref{cor:simple}.}
  \label{fig:McCurley and Trudgian}
\end{figure}

As noted earlier, the current work is focused on fixing the aforementioned  error in \cite{Trudgian}, while at the same time introducing a number of further
improvements. Most notably, in Theorem~\ref{thm:Lboundleft} we extend a bound of
Rademacher~\cite{Rademacher} on $|L(s,\chi)|$ from
$-\frac12 \leq \sigma \leq \frac32$ to all
real~$\sigma$, allowing us to set our parameters more liberally. Also, we
make a choice for~$\eta$ in terms of~$q$ and~$T$ that is nearly optimal,
allowing us to deduce a rather simpler bound. Thirdly, we computed all
806,544 zeros of
primitive $L$-functions, corresponding to 80,818 characters, with $\ell \leq 6$ and $1 < q <  935$,  to
sufficient precision to verify the bounds in
Theorem~\ref{thm:Main} in this range, allowing us to assume greater lower bounds on~$T$ in our proofs. Finally, we are also interested in lower
bounds on $N(T,\chi)$ when $T$ is small, as in Conjecture~\ref{conj:smallT} below, and so we state the
inequality in Theorem~\ref{thm:Main} in a form that is more useful towards that end.

For any fixed $q$, we should note that Theorem \ref{thm:Main} is not particularly of practical interest. The conductor $q$ will either be so large that ``explicit'' is
not helpful, or small enough that one can compute the low height zeros
to great precision. For large $T$, the main term
$\frac{T}{\pi}\log\frac{qT}{2\pi e}$ so greatly exceeds the error
term (even in McCurley's form), that any improvement is truly
minor. Also, the requirement that $T\geq 5/7$ makes our bound
unhelpful for those studying zeros of extremely low height.

Where this result is useful is when $T$ is small, but a large range of values of~$q$ are to be worked with, and the need for an explicit bound arises
not from the large number of zeros that come with large $T$ for one
character but from the large number of characters under consideration. For
example, in~\cite{EBPAP}, the authors needed to treat all moduli up to
$q\leq 10^5$, a total of 1,847,865,075 primitive
characters. McCurley's bound implies that there are at most
32,456,205,589 corresponding zeros of height at most $1$ and conductor at most
$10^5$, while Trudgian's claims (one of which we used in~\cite{EBPAP})
would cut this down to 21,880,443,454. Theorem~\ref{thm:Main}
reduces this number to just 16,461,465,486.  Some computations for low height zeros and the proof of Theorem~\ref{thm:Main}, tailored specifically for $T=1$ and shown in Table~\ref{table:q_a(T,k)} below, lower the number 
still further to just 14,431,705,483.

It is disappointing that, for \emph{fixed} $T$, the main term and the error term in
Theorem~\ref{thm:Main} are of comparable size. We are thus motivated
to conjecture, as we are unable to prove, that the error term should
be an actual  error term, that is, genuinely smaller than the main term. We state this conjecture in a more qualitiative form:

\begin{conjecture}\label{conj:smallT}
  For every real $T>0$ and every integer $M\ge1$, there is an integer~$q_0$ such that
  every character~$\chi$ with conductor at least~$q_0$ satisfies
  $N(T,\chi) \geq M.$
\end{conjecture}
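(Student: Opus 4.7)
The plan is to combine Theorem~\ref{thm:Main} with the monotonicity $N(T_1,\chi) \leq N(T_2,\chi)$ for $T_1 \leq T_2$, so that a proof of the conjecture at a single $T$ immediately gives it for every larger $T$. Fix $M \geq 1$. Applying the lower bound implicit in Theorem~\ref{thm:Main} at fixed $T$ as $q \to \infty$ yields
$$N(T,\chi) \geq \frac{T}{\pi}\log\frac{qT}{2\pi e} - \frac{\chi(-1)}{4} - 0.22737\,\ell - 2\log(1+\ell) + \tfrac12,$$
with $\ell = \log\frac{q(T+2)}{2\pi}$. The leading $q$-behavior is $\bigl(\tfrac{T}{\pi} - 0.22737\bigr)\log q + O_T(\log\log q)$, which tends to $+\infty$ precisely when $T > 0.22737\,\pi \approx 0.7144$. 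For any such $T$ one picks $q_0$ so that the right-hand side exceeds $M$, and by monotonicity the conjecture follows for every $T$ above approximately $5/7$.

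The remaining range $0 < T < 0.22737\,\pi$ is genuinely not addressed by Theorem~\ref{thm:Main}, since the error term absorbs the main term. Two routes look natural. First, one could try to sharpen the constant $0.22737$; note however that for a specific small $T$ the constant must be driven strictly below $T/\pi$, so the required improvement shrinks to $0$ as $T \to 0$. The constant stems from bounding $|L(\sigma + it,\chi)|$ off the critical strip and invoking convexity, so tailoring the contour in the Riemann--von Mangoldt argument to lie closer to the critical line at small $T$ (where the pointwise bounds on $|L(1/2 + iT,\chi)|$ are sharper) could plausibly help. Second, one could bound the height of the lowest zero of $L(s,\chi)$ from above: an unconditional estimate of the shape $|\gamma_{\min}| \ll_M 1/\log q$, iterated and combined with the upper bound from Theorem~\ref{thm:Main} to rule out clustering, would produce $M$ zeros in $[-T,T]$ whenever $T \gg_M 1/\log q$.

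The principal obstacle is exactly this small-$T$ regime. When $T$ is of order $1/\log q$ the expected count $\tfrac{T}{\pi}\log\tfrac{qT}{2\pi e}$ is itself only $O(1)$, so any argument precluding $N(T,\chi) = 0$ for some character of enormous conductor must exploit information about low-lying zeros well beyond any Riemann--von Mangoldt-type estimate. Excluding, uniformly in $\chi$, an anomalously wide zero-free neighborhood of the real axis appears close in strength to unconditional GRH-type input for individual characters --- presumably the reason the authors leave Conjecture~\ref{conj:smallT} open.
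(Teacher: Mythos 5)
This statement is labeled a conjecture in the paper and the authors explicitly state they are ``unable to prove'' it, so there is no proof for your proposal to be compared against. What you have written is not a proof either; it is an accurate account of why the conjecture remains open, and it tracks the paper's own remarks almost exactly. The paper notes that Theorem~\ref{thm:Main} implies the conjecture for $T\geq 5/7+10^{-5}$ and that it follows from GRH via Selberg's bound $O\bigl(\frac{\log q(T+1)}{\log\log q(T+3)}\bigr)$ on the error term; your analysis of the coefficient $\frac{T}{\pi}-0.22737$ is the same argument, and your threshold $T>0.22737\pi$ is, if anything, stated a bit more carefully than the paper's $5/7+10^{-5}$ (which numerically sits a hair below $0.22737\pi\approx 0.714304$, so as written the paper's claimed threshold is marginally optimistic, though the discrepancy is of order $10^{-5}$ and immaterial to the point). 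Your diagnosis of the small-$T$ obstruction --- that once $T$ is of order $1/\log q$ the main term is $O(1)$ and a Riemann--von Mangoldt count cannot distinguish $N(T,\chi)=0$ from $N(T,\chi)\geq 1$, so one would need uniform control of low-lying zeros close to GRH-strength --- is exactly the reason the authors leave it as a conjecture. The one item the paper mentions that you omit is the conditional route through Selberg's theorem, but that does not furnish an unconditional proof, so its omission creates no gap. In short: you have correctly recognized a conjecture as a conjecture, reconstructed the partial result that Theorem~\ref{thm:Main} delivers, and identified the genuine obstruction; no further criticism applies because the paper supplies no proof to match.
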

Assuming the generalized Riemann hypothesis for Dirichlet $L$-functions,
Selberg~\cite{Selberg} proved
that the error term in the counting function for $N(T,\chi)$ is
$O\big(\frac{\log q(T+1)}{\log\log q(T+3)}\big)$ uniformly in~$q$
and~$T$; in particular, Conjecture~\ref{conj:smallT} follows from
GRH. McCurley's bound implies that this conjecture holds for
$T> \frac{1}{\log2} \doteq 1.443$, and Theorem~\ref{thm:Main} implies this
conjecture for $T\ge 5/7+
10^{-5}$. By way of example,
we know of characters with conductor $840$ for which $N(1,\chi)=0$;
Theorem~\ref{thm:Main} implies that $N(1,\chi)\geq 1$ when $q\geq 1.3 \times 10^{47}$.
The largest conductor of a character~$\chi$ in our dataset with $N(2,\chi)=0$ is~$241$;
Theorem~\ref{thm:Main} implies that $N(2,\chi)\geq 1$ when $q\geq 1.2\times 10^7$.

Motivated by Selberg's bound and somewhat substantial computation of  zeros, we make a rather speculative conjecture.
\begin{conjecture}
  Let $\chi$ be a character with conductor $q>1$. Recall that $\ell = \log\frac{q(T+2)}{2\pi}$. If $T\ge 5/7$, then
  \begin{equation*}
    \left| N(T,\chi) - \left( \frac{T}{\pi} \log\frac{qT}{2\pi e} -\frac{\chi(-1)}{4}\right) \right| 
    \le \frac{\ell}{\log(2+\ell)}.
  \end{equation*}
\end{conjecture}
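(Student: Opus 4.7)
The plan is to build on the contour-integration framework underlying Theorem~\ref{thm:Main}, but to replace the essentially trivial estimate for the argument of $L(s,\chi)$ along the critical line by the much sharper control supplied by Selberg's classical method. Concretely, applying the argument principle to a rectangle with vertices $-\frac12 \pm iT$ and $\sigma_0 \pm iT$ (for some $\sigma_0>1$), using the functional equation on the left side and the Dirichlet series on the right, yields
\begin{equation*}
N(T,\chi) - \left(\frac{T}{\pi}\log\frac{qT}{2\pi e} - \frac{\chi(-1)}{4}\right) = S(T,\chi) + O\!\left(\frac{1}{T}\right),
\end{equation*}
where $S(T,\chi) = \frac{1}{\pi}\arg L(\frac{1}{2}+iT,\chi)$ with the standard continuation convention; this is exactly the decomposition made in the proof of Theorem~\ref{thm:Main}. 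That theorem arises from a Backlund-type estimate of the shape $|S(T,\chi)| \ll \ell$, and the conjecture asks, in essence, to save a full factor of $\log\log$ in this bound with an explicit leading constant of $1$.

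To attempt the saving, I would adapt Selberg's treatment of $\zeta(s)$ to the $\chi$-aspect. The starting point is an identity expressing $\pi S(T,\chi)$ as
\begin{equation*}
-\Im \sum_{n \leq X} \frac{\Lambda_X(n)\,\chi(n)}{n^{1/2+iT}\,\log n} \;+\; R(T,\chi;X),
\end{equation*}
where $\Lambda_X$ is a smoothed von Mangoldt function and the remainder $R(T,\chi;X)$ is governed by the number of zeros of $L(s,\chi)$ in a short horizontal neighborhood of $\frac{1}{2}+iT$. Choosing $X=(qT)^{c/\log\log qT}$ for a suitable constant $c$, a second-moment analysis over $n$ bounds the smoothed prime sum by $O(\ell/\log(2+\ell))$, while $R$ is handled by applying Jensen's formula on circles of radius of order $1/\log\log qT$ centred near $\frac{1}{2}+iT$, using the explicit bound on $|L(s,\chi)|$ for $\sigma<-\frac12$ supplied by Theorem~\ref{thm:Lboundleft} to control the maxima on those circles.

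The principal obstacle, and the reason the inequality is stated only as a conjecture, is the demand that the implicit constant be exactly $1$. Even on the assumption of the generalized Riemann hypothesis, the currently known explicit forms of Selberg's estimate yield a constant several times larger, and the unconditional version loses further in the zero-density step. Closing the gap would appear to require either a substantially tighter mean-value theorem for $|L(\frac{1}{2}+iT,\chi)|^{2k}$ uniform in $q$, or a genuinely new route from $L'/L$ to $\arg L$ that avoids the loss introduced by Jensen's formula. A secondary difficulty is uniformity down to $T=5/7$ and across all $q>1$: one would need to combine the asymptotic analysis for large $\ell$ with a finite numerical verification in a compact region of $(q,T)$-space, in the spirit of the computation of zeros used to relax the hypothesis on $T$ in Theorem~\ref{thm:Main}.
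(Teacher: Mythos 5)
The statement you were asked about is a \emph{conjecture} in the paper, not a theorem; the authors give no proof, only motivation (Selberg's GRH-conditional bound $S(T,\chi)=O\bigl(\log q(T+1)/\log\log q(T+3)\bigr)$ together with extensive rigorous computation of low-height zeros). Your write-up is therefore correctly calibrated: you do not claim a proof, and what you offer is a plausible program rather than an argument, which is exactly the state of the art. The decomposition you describe --- contour integration around the rectangle, functional equation on the left edge, reduction to $S(T,\chi)=\frac1\pi\arg L(\frac12+iT,\chi)$ up to $O(1/T)$ --- matches equations \eqref{eq:von Mangold}--\eqref{eq:main-terms} of the paper, and your identification of Selberg's smoothed-$\Lambda$ method as the natural route to a $\log\log$ saving is the same reasoning the authors use when they cite \cite{Selberg} to justify believing the conjecture.

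Where you should be more careful is in the suggested machinery. First, Selberg's unconditional argument for $\zeta$ already gives only $S(t)=O(\log t)$, not $O(\log t/\log\log t)$; the $\log\log$ saving in \cite{Selberg} is conditional on GRH, so your step ``a second-moment analysis over $n$ bounds the smoothed prime sum by $O(\ell/\log(2+\ell))$'' cannot be carried out unconditionally by the route you sketch --- the remainder $R(T,\chi;X)$ involves a zero-density input that is precisely what GRH would trivialize. Second, Theorem~\ref{thm:Lboundleft} of the paper bounds $|L(s,\chi)|$ far to the left of the critical strip; for the Jensen circles of radius $\asymp 1/\log\log qT$ centered near $\frac12+iT$ that you propose, the relevant maximum-modulus input is Rademacher's convexity bound inside the strip, not the left-half-plane bound, so citing Theorem~\ref{thm:Lboundleft} there is misdirected. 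Third, even granting GRH, the explicit constant in Selberg's bound is nowhere near $1$, and the conjectured inequality also has no allowance for a lower-order additive term, so the ``secondary difficulty'' you flag (uniformity down to $T=5/7$ and small $q$, requiring finite verification) is in fact on the same footing as the main difficulty: no one knows how to obtain leading constant $1$ in \emph{any} regime. Your instinct that the statement is presently out of reach is sound; just be precise that the $\log\log$ saving itself is already conditional, not merely the constant.
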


The outline of this paper is as follows. In Section \ref{sec-main}, following the approach of McCurley, we derive our first estimates for $N(T,\chi)$, from which our main results will follow. Section \ref{sec-gamma} is devoted to sharp inequalities for the Gamma function. In Section \ref{sec-bounds}, we begin the task of bounding the argument of $L(s,\chi)$, by constructing a function whose zeros measure changes in the argument. 
In Section \ref{sec-jensen}, we complete this process through application of Backlund's trick and Jensen's formula.
Finally, in
Section \ref{sec-assembling}, we complete the proof of Theorem \ref{thm:Main}.

The technical details of our computations can be found in data files accessible at:
\begin{center}
\texttt{\href{http://www.nt.math.ubc.ca/BeMaObRe2/}{\url{http://www.nt.math.ubc.ca/BeMaObRe2/}}}
\end{center}
%-------------------------------------------------------
\section{The Main Term} \label{sec-main}
%-------------------------------------------------------

Assuming that $\chi$ is a primitive character with conductor $q>1$,
the completed $L$-function, an entire function, is defined as
\[\Lambda(s,\chi) \coloneqq  \left(\frac q \pi\right)^{s/2}
  \Gamma\left(\frac{s+a_\chi}{2}\right)L(s,\chi);\]  we note that
the zeros of $\Lambda(s,\chi)$ are precisely those of $L(s,\chi)$. The
{\it functional equation} is
\begin{equation}\label{Functional Equation} \Lambda(s,\chi) =
  \epsilon(\chi) \Lambda(1-s,\bar\chi),\end{equation}
where $\epsilon(\chi)$ is independent of $s$ and has absolute value 1.

Fix $\sigma_1>1$. By integrating $\frac{\Lambda'}{\Lambda}(s,\chi)$
around the rectangle with corners at $\sigma_1\pm iT$ and
$1-\sigma_1 \pm iT$ (where $T$ is not the height of a zero of
$L(s,\chi)$), and appealing to equation~\eqref{Functional Equation} on the left half
of the contour, we arrive at the identity
\begin{equation}\label{eq:von Mangold}
  N(T,\chi) = \frac{T}{\pi} \log\frac{q}{\pi} + \frac{2}{\pi} \Im \ln\Gamma(\tfrac 14 + \tfrac {a_\chi}2+ i\, \tfrac T2) + \frac 1\pi \arg L(s,\chi)\Big|_{s=1/2-iT}^{1/2+i T},
\end{equation}
where
\[a_\chi \coloneqq  \begin{cases} 0, & \text{if } \chi(-1)=1, \\ 1, & \text{if } \chi(-1) =    -1,\end{cases}\]
is the sign of the character.
Define
\begin{equation} \label{GAT}
g(a,T) \coloneqq  \frac2\pi \Im\ln\Gamma(\tfrac14+\tfrac a2+ i \tfrac  T2)-\frac{T}{\pi}\log\frac{T}{2e}-\frac{2a-1}{4},
\end{equation} 
so that 
\begin{equation}\label{eq:main-terms}
  \frac{T}{\pi} \log\frac{q}{\pi} + \frac{2}{\pi} \Im \ln\Gamma(\tfrac 14 + \tfrac {a_\chi}2+ i\, \tfrac T2) 
  =\frac{T}{\pi} \log\frac{qT}{2\pi  e} -\frac{\chi(-1)}{4} + g(a_\chi,T).
\end{equation}
We have that
\begin{align*}
  \arg L(s,\chi)\Big|_{s=1/2-iT}^{1/2+i T} &=
                                             \arg L(s,\chi)\Big|_{s=1/2-iT}^{\sigma_1-i T}
                                             + \arg L(s,\chi)\Big|_{s=\sigma_1-iT}^{\sigma_1+i T}  +\arg L(s,\chi)\Big|_{s=\sigma_1+iT}^{1/2+i T} \\
                                           &=\arg L(\sigma-iT,\chi)\Big|_{\sigma=1/2}^{\sigma_1}
                                             +\arg L(\sigma_1+it,\chi)\Big|_{t=-T}^{T} +\arg L(\sigma+iT,\chi)\Big|_{\sigma=\sigma_1}^{1/2} .
\end{align*}
In particular,
\begin{equation} \label{eq:3 pieces}
 \left| \arg L(s,\chi)\Big|_{s=1/2-iT}^{1/2+i T} \right| \leq \left|
    \arg L(\sigma-iT,\chi)\Big|_{\sigma=1/2}^{\sigma_1} \right|
  +\left| \arg L(\sigma_1+it,\chi)\Big|_{t=-T}^{T} \right|\\
  +\left| \arg L(\sigma+iT,\chi)\Big|_{\sigma=\sigma_1}^{1/2} \right|.
  \end{equation}
These three terms sometimes all have the same
sign in practice, suggesting that there is no possibility of finding cancellation
in general. Since 
$$
\overline{L(\sigma-iT,\chi)}=L(\sigma+iT,\bar\chi),
$$
we have
\[ \left| \arg L(\sigma-iT,\chi)\Big|_{\sigma=1/2}^{\sigma_1} \right|
  \leq \max_{\tau \in \{\chi,\bar\chi\}}\left| \arg L(\sigma+iT,\tau)\Big|_{\sigma=\sigma_1}^{1/2}
  \right| .\] If $\chi$ is a real character then we have equality in this statement, so
again there is no recoverable loss in general.

Trivial bounds on $|L(s,\tau)|$ come from comparing the Euler products of $L(s,\tau)$ and $\zeta(s)$, leading immediately to the following.
\begin{lemma}\label{lem:EPB}
  If $s=\sigma +it$ and $\sigma>1$, then
  \(\displaystyle \frac{\zeta(2\sigma)}{\zeta(\sigma)} \leq  |L(s,\tau)| \leq \zeta(\sigma).  \)
\end{lemma}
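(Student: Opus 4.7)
The plan is to use the Euler product for $L(s,\tau)$, valid for $\Re s > 1$, namely
\[
L(s,\tau) = \prod_p \left(1 - \frac{\tau(p)}{p^s}\right)^{-1},
\]
and then bound each local factor by the triangle inequality, exploiting the fact that $|\tau(p)|\le 1$ for every prime~$p$.

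For the upper bound, I would note that for each prime $p$,
\[
\bigl|1 - \tau(p)p^{-s}\bigr| \ge 1 - |\tau(p)|p^{-\sigma} \ge 1 - p^{-\sigma} > 0,
\]
so taking the product of reciprocals gives
\[
|L(s,\tau)| \le \prod_p \bigl(1 - p^{-\sigma}\bigr)^{-1} = \zeta(\sigma).
\]
For the lower bound I would apply the reverse triangle inequality in the same way, obtaining
\[
\bigl|1 - \tau(p)p^{-s}\bigr| \le 1 + p^{-\sigma},
\]
and hence
\[
|L(s,\tau)| \ge \prod_p \bigl(1 + p^{-\sigma}\bigr)^{-1}.
\]
The final step is the identity $(1+x)^{-1} = (1-x)/(1-x^2)$ applied with $x = p^{-\sigma}$, which telescopes the Euler products to
\[
\prod_p \bigl(1 + p^{-\sigma}\bigr)^{-1} = \prod_p \frac{1 - p^{-\sigma}}{1 - p^{-2\sigma}} = \frac{\zeta(2\sigma)}{\zeta(\sigma)}.
\]

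There is no real obstacle here: convergence of the Euler product, and termwise application of the triangle inequalities, are all justified by the assumption $\sigma > 1$. The only subtlety worth mentioning explicitly is that the bound $|\tau(p)|\le 1$ holds for any Dirichlet character (including when $\tau(p) = 0$ for primes dividing the conductor, in which case the Euler factor equals~$1$ and trivially lies between the two bounds).
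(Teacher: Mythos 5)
Your proof is correct and follows exactly the route the paper has in mind: the paper simply states the bounds as an immediate consequence of ``comparing the Euler products of $L(s,\tau)$ and $\zeta(s)$,'' which is precisely the termwise argument you carry out. One tiny terminological slip: the inequality $|1 - \tau(p)p^{-s}| \le 1 + p^{-\sigma}$ used for the lower bound on $|L|$ is the ordinary triangle inequality, not the reverse triangle inequality (which is what you correctly invoked for the upper bound); the mathematics is unaffected.
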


\begin{prop}\label{C2 prop}
  For $\sigma_1>1$,
  \(\displaystyle \left| \arg L(\sigma_1+it,\tau)\Big|_{t=-T}^{T}  \right| \le 2 \log\zeta(\sigma_1).\)
\end{prop}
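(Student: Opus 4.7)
The plan is to exploit the Euler product (equivalently, the Dirichlet series for $\log L$) on the half-plane $\sigma > 1$, where $L(s,\tau)$ has no zeros and an analytic logarithm can be defined unambiguously.

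First I would fix the branch. Since $\sigma_1 > 1$, the series
\[
\log L(s,\tau) = -\sum_p \log\bigl(1 - \chi(p)p^{-s}\bigr)
= \sum_{n=2}^\infty \frac{\Lambda(n)\chi(n)}{n^s \log n}
\]
converges absolutely and defines an analytic branch of $\log L$ on $\{\Re s > 1\}$, normalized so that $\log L(s,\tau) \to 0$ as $\sigma \to +\infty$. Taking imaginary parts gives an explicit formula for $\arg L(\sigma_1+it,\tau)$ that varies continuously in $t$, which is the branch that the expression $\arg L(\sigma_1+it,\tau)\bigm|_{t=-T}^{T}$ refers to.

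Second, I would apply the trivial bound $|\Im z| \le |z|$ termwise to obtain
\[
\bigl|\arg L(\sigma_1+it,\tau)\bigr|
= \left| \Im \sum_{n=2}^\infty \frac{\Lambda(n)\chi(n)}{n^{\sigma_1+it}\log n} \right|
\le \sum_{n=2}^\infty \frac{\Lambda(n)}{n^{\sigma_1}\log n}
= \log \zeta(\sigma_1),
\]
valid for every real $t$ (the last equality is just the same Dirichlet series identity applied to $\zeta$).

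Finally, since our chosen branch is continuous on the vertical line, the change in argument is simply the difference of endpoint values, and the triangle inequality gives
\[
\left|\arg L(\sigma_1+it,\tau)\bigm|_{t=-T}^{T}\right|
= \bigl|\arg L(\sigma_1+iT,\tau) - \arg L(\sigma_1-iT,\tau)\bigr|
\le 2\log\zeta(\sigma_1),
\]
which is the desired bound. There is no real obstacle here; the only subtlety is verifying that the branch of $\arg$ implicitly used in the statement is the continuous one, which is forced by the fact that $L(s,\tau)$ is non-vanishing and analytic on the segment, making the argument change intrinsic. Note that this argument does not even use Lemma~\ref{lem:EPB}; the upper bound $|L(s,\tau)| \le \zeta(\sigma_1)$ from that lemma would give a cruder estimate via $|\arg z| \le \pi$ and a winding-number count, but the Dirichlet series approach yields the cleaner constant $2\log\zeta(\sigma_1)$ directly.
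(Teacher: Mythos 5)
Your proof is correct and follows essentially the same route as the paper's: bound $|\arg L(\sigma_1+it,\tau)| \le \log\zeta(\sigma_1)$ pointwise via the Dirichlet series for $\log L$ (trivially $|\Im z|\le|z|$ termwise), then observe that a quantity confined to $[-\log\zeta(\sigma_1),\log\zeta(\sigma_1)]$ can change by at most $2\log\zeta(\sigma_1)$ along the segment. Your closing remark about Lemma~\ref{lem:EPB} is a fair point: the paper cites that lemma, but what its proof actually uses is the termwise Euler-product comparison applied directly to the series for $\log L$ (the same comparison that proves the lemma), since the stated conclusion $|L|\le\zeta(\sigma_1)$ by itself controls only $\Re\log L=\log|L|$ and not $|\arg L|$; your write-up simply makes this ingredient explicit.
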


\begin{proof}
  For $t$ between $-T$ and $T$, Lemma~\ref{lem:EPB} implies that
  \[| \arg L(\sigma_1+t i,\tau)| \leq |\log L(\sigma_1+t i,\tau)| \leq    \log \zeta(\sigma_1).\]
  The proposition thus follows from the fact that $\arg L$ is trapped between $-\log\zeta(\sigma_1)$ and $\log\zeta(\sigma_1)$, whereby its net change is at most $2\log\zeta(\sigma_1)$.
\end{proof}

We have thus arrived at the inequality
\begin{equation}\label{eq:One arg term left}
  \left| N(T,\chi) - \left( \frac{T}{\pi} \log\frac{qT}{2\pi e} -\frac{\chi(-1)}{4}+ g(a_\chi,T)\right) \right| \le \frac2\pi \log \zeta(\sigma_1)+ \frac 2\pi \max_{\tau \in \{\chi,\bar\chi\}}\left| \arg L(\sigma+iT,\tau)\Big|_{\sigma=\sigma_1}^{1/2} \right|.
\end{equation}
We will use Stirling's approximation to estimate $g(a_\chi,T)$ in the next section, and the remainder of the work is spent on bounding the change of  $\arg L(\sigma+iT,\tau)$ on the segment $\sigma\in[1/2,\sigma_1]$. Up to this point, we have followed McCurley's approach to the problem verbatim.

%-----------------------------------------------------------------------------
\section{The Gamma Function} \label{sec-gamma}
%-----------------------------------------------------------------------------

We require bounds for the Gamma function in two contexts. The first of these is  in equation
\eqref{eq:von Mangold} where the real part of the argument  is either $1/4$ or $3/4$, while the second is in the situation where we have a  fixed imaginary
part $T/2$ and varying real part. Both usages are nicely handled by a
suitable shifted version of Stirling's approximation.

\begin{lemma}[Stirling's approximation]\label{lem:Stirling}
  Let $x$ and $y$ be positive real numbers. Then $\Im \ln\Gamma(x+iy)$ is
  within 
  $$
  \frac{(4+3\pi)/1440}{((x+2)^2+y^2)^{3/2}}
  $$
   of the expression
     \begin{multline*}
    y \log \frac{y}{e} + \frac{\pi}{2} \left(x-\frac{1}{2}\right)
    -\left(x+\frac{3}{2}\right) \arctan\frac{x+2}{y}
    -\frac{y/12}{(x+2)^2+y^2} \\
    +\frac{y}{2} \log \left(1+\frac{(x+2)^2}{y^2}\right)
    +\arctan\frac{x}{y} +\arctan \frac{x+1}{y}.
  \end{multline*}
\end{lemma}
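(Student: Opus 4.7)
The plan is to apply Stirling's formula to $\ln\Gamma(z+2)$ with $z = x+iy$ and recover $\ln\Gamma(z)$ via the recursion $\Gamma(z+2) = z(z+1)\Gamma(z)$. Shifting the argument by $2$ serves two purposes: it guarantees $\Re(z+2) > 2$, which gives clean absolute convergence for the relevant integral representations, and it causes $(x+2)^2+y^2$ to appear naturally in the error term. The starting point is Binet's second formula: for $\Re w > 0$,
\[
  \ln\Gamma(w) = \bigl(w - \tfrac12\bigr)\log w - w + \tfrac12\log(2\pi) + 2\int_0^\infty \frac{\arctan(t/w)}{e^{2\pi t}-1}\, dt.
\]
Using $\int_0^\infty t/(e^{2\pi t}-1)\, dt = \tfrac{1}{24}$ together with $\arctan(t/w) = t/w + O(t^3/|w|^3)$ peels off the classical $\tfrac{1}{12w}$ correction and leaves a remainder $R(w)$. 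To control $|\Im R(w)|$ with the precise constant required, I would extract the next Stirling term $-\tfrac{1}{360 w^3}$ (using $\int_0^\infty t^3/(e^{2\pi t}-1)\, dt = \tfrac{1}{240}$) and bound the residual tail via $\arctan(u) - u + u^3/3 = \int_0^u s^4/(1+s^2)\, ds$, splitting the $t$-integral according to whether $t \leq |w|$ or $t > |w|$. The elementary estimate $|\Im(w^{-3})| \leq |w|^{-3}$ contributes $\tfrac{1}{360|w|^3} = \tfrac{4}{1440|w|^3}$, and the tail contributes the remaining $\tfrac{3\pi}{1440|w|^3}$, combining to the displayed bound.

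Once the error bound is in hand, the rest is algebraic bookkeeping. Writing $w = z+2 = (x+2)+iy = r e^{i\theta}$ with $r = \sqrt{(x+2)^2+y^2}$ and $\theta = \arctan(y/(x+2)) = \pi/2 - \arctan((x+2)/y)$ (valid since $x,y > 0$), and splitting $\log r = \log y + \tfrac12\log(1+(x+2)^2/y^2)$, a direct computation yields
\[
  \Im\bigl[(w-\tfrac12)\log w - w + \tfrac{1}{12w}\bigr] = y\log\tfrac{y}{e} + \tfrac{\pi}{2}\bigl(x+\tfrac32\bigr) - \bigl(x+\tfrac32\bigr)\arctan\tfrac{x+2}{y} + \tfrac{y}{2}\log\bigl(1+\tfrac{(x+2)^2}{y^2}\bigr) - \tfrac{y/12}{(x+2)^2+y^2},
\]
matching the target formula except for a spurious constant of $\pi$. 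Finally, $\ln\Gamma(z) = \ln\Gamma(z+2) - \log(z+1) - \log z$ contributes
\[
  -\arctan(y/x) - \arctan(y/(x+1)) = -\pi + \arctan(x/y) + \arctan((x+1)/y)
\]
to the imaginary part (using $\arctan(y/x) = \pi/2 - \arctan(x/y)$ for $x,y > 0$), supplying the two missing $\arctan$ terms and simultaneously cancelling the extra $\pi$: indeed $\tfrac{\pi}{2}(x+\tfrac32) - \pi = \tfrac{\pi}{2}(x-\tfrac12)$, the leading constant in the target expression.

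The main obstacle is the sharpness of the error constant $(4+3\pi)/1440$. Standard Olver-type remainder bounds on the Stirling series are looser and do not suffice, so one has to work directly with the Binet integral and decompose the remainder into a $-1/(360 w^3)$ piece (bounded by $1/(360|w|^3)$ in absolute imaginary part) plus a higher-order tail (bounded by $\pi/(480|w|^3)$ after a careful split of the $\arctan$ integral). The remaining manipulations — expanding $\theta$ via complementary angles, assembling the $\log$ pieces, and cancelling the excess $\pi$ against the $\log z$ and $\log(z+1)$ contributions — are then routine.
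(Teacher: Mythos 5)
Your algebraic skeleton is exactly the paper's: shift by $2$ via $\Gamma(z+2) = z(z+1)\Gamma(z)$, apply Stirling to $w = z+2$ in the form $\ln\Gamma(w) = (w-\tfrac12)\log w - w + \tfrac12\log 2\pi + \tfrac{1}{12w} + R_2(w)$ with $|R_2(w)| \le \tfrac{4+3\pi}{1440|w|^3}$ for $\Re w > 0$, and convert $\Arg$'s to $\arctan$'s via $\Arg(x+iy) = \tfrac{\pi}{2} - \arctan(x/y)$. Your bookkeeping (the $\log r = \log y + \tfrac12\log(1+(x+2)^2/y^2)$ split, the cancellation of the spurious $\pi$ against $\tfrac{\pi}{2}(x+\tfrac32)$) is verified and correct.

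The one real point of departure is the treatment of the remainder constant. The paper simply cites Brent (arXiv:1609.03682) for $|R_2(w)| \le \tfrac{4+3\pi}{1440|w|^3}$ and then uses $|\Im R_2| \le |R_2|$. You instead propose to \emph{derive} this from Binet's second formula. Your structural identification is right — the $\tfrac{4}{1440} = \tfrac{1}{360}$ piece is the modulus of the next Stirling term $-\tfrac{1}{360w^3}$ (coming from $\int_0^\infty t^3/(e^{2\pi t}-1)\,dt = \tfrac1{240}$), and the claimed residual is $\tfrac{3\pi}{1440} = \tfrac{\pi}{480}$ — but the hard step is precisely the estimate $|\tilde R(w)| \le \tfrac{\pi}{480|w|^3}$ uniformly for $\Re w > 0$, and your sketch does not establish it. The difficulty is that for complex $w$ one must bound $\int_0^\infty \phi(t/w)/(e^{2\pi t}-1)\,dt$ with $\phi(u) = \int_0^u s^4/(1+s^2)\,ds$ integrated along a ray of argument $-\arg w$; as $\arg w \to \pi/2$ the factor $|1+s^2|$ in the denominator can become small along that ray, and a naive split at $t = |w|$ does not manifestly produce a constant independent of $\arg w$. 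This is exactly the content of Brent's theorem and is not routine — the standard Olver-type bound only gives $\tfrac{\sec^4(\arg w/2)}{360|w|^3}$, which at $\arg w \to \pi/2$ degrades to $\tfrac{16}{1440|w|^3}$, strictly worse than $\tfrac{4+3\pi}{1440|w|^3}$, so you cannot fall back on the textbook estimate either. If your goal is a self-contained proof, this tail bound has to be worked out; as written it is a genuine gap. The simplest repair, and what the paper does, is to cite Brent for the remainder bound and keep your (correct) algebra for the rest.
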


\begin{proof}
  By \cite{Hare}*{Proposition 2.1}, we have the identity 
  \begin{equation*}
    \ln\Gamma(z)=\ln\Gamma(z+1)-\log z=\ln\Gamma(z+2)-\log z - \log (z+1).
  \end{equation*}
  Thus,
  \begin{equation}\label{eq:gammashift}
    \Im \ln\Gamma(z)=\Im \ln\Gamma(z+2)-\Arg(z)-\Arg(z+1).
  \end{equation}
  We will use the version of Stirling's series and corresponding error bounds given
  in~\cite{Brent}: for $\Re(z) > 0$, there is a complex function $R_2$
  with $|R_2(z)| \leq \frac{4+3\pi}{1440 |z|^3}$ and
  \[\ln\Gamma(z) =\left(z-\frac12 \right) \log z-z+\frac12 \log 2\pi +
    \frac{1}{12z} + R_2(z).\] For $x$ and $y$ positive real numbers, we have
  \(\Arg(x+i y)=\frac{\pi}{2}-\arctan(x/y).\) Equation ~\eqref{eq:gammashift}
  now becomes
  \begin{align*}
    \Im\ln\Gamma(x+iy) &=
                         y \log \frac{y}{e} + \frac{\pi}{2} \left(x-\frac{1}{2}\right) 
                         -\left(x+\frac{3}{2}\right) \arctan\frac{x+2}{y} \\
                       &\qquad -\frac{y/12}{(x+2)^2+y^2}+\frac{y}{2} \log \left(1+\frac{(x+2)^2}{y^2}\right) \\
                       &\qquad +\arctan\frac{x}{y}  +\arctan \frac{x+1}{y}
                         + \Im R_2(x+2+iy),
  \end{align*}
  and the lemma follows from $|\Im R_2(z+2)|\leq |R_2(z+2)|$.
\end{proof}

\begin{prop}\label{gamma prop}
  For $a\in\{0,1\}$, $T \ge 5/7$ and $g(a,T)$ defined as in (\ref{GAT}), we have
  \[|  g(a,T) |\le \frac{2-a}{50T}.\]
\end{prop}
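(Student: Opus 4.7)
The plan is to apply Lemma~\ref{lem:Stirling} with $x = \frac14+\frac a2$ and $y = \frac T2$. With these substitutions, two terms on the right-hand side of Stirling's formula, namely $y\log(y/e)$ and $\frac{\pi}{2}(x-\frac12)$, once multiplied by $\frac{2}{\pi}$ in the definition of $g(a,T)$, match exactly the ``$-\frac{T}{\pi}\log\frac{T}{2e}$'' and ``$-\frac{2a-1}{4}$'' appearing in \eqref{GAT}, and so cancel. What remains is an entirely explicit formula for $g(a,T)$ in terms of two arctangents, a logarithm, and a small rational function, up to an additive Stirling error $E(a,T)$ with $|E(a,T)| \le \frac{2(4+3\pi)/(1440\pi)}{((x+2)^2+y^2)^{3/2}} = O(T^{-3})$.

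The next step is to show that the remaining explicit piece is of size $\frac{1}{24\pi T}+O(T^{-3})$. I would expand the two arctangents, the rational function $\frac{y/12}{(x+2)^2+y^2}$, and $\frac{y}{2}\log(1+(x+2)^2/y^2)$ in Taylor series in $1/y$. A direct computation shows that the coefficient of $1/y$ in the bracketed sum collapses to $-\frac{x^2}{2}+\frac{x}{2}-\frac{1}{12}$, which evaluates to $\frac{1}{96}$ at \emph{both} $x=\frac14$ and $x=\frac34$. After multiplying by $\frac2\pi$ and substituting $y = T/2$, this yields the leading asymptotic $g(a,T) \sim \frac{1}{24\pi T}$; and since $\frac{1}{24\pi} < \frac{1}{50}$, the claimed bound already holds comfortably in the $T\to\infty$ limit for either value of $a$.

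To turn this asymptotic statement into a rigorous bound on $[5/7,\infty)$, I would keep the Taylor expansions above to finite order with explicit remainder estimates (for instance, the alternating-series bound $\bigl|\arctan u - u + \tfrac{u^3}{3}\bigr| \le \tfrac{|u|^5}{5}$ whenever $|u|\le 1$, and the integral-remainder form when $|u|>1$), combine these with the Stirling error bound on $E(a,T)$, and reduce to verifying an elementary inequality of the form $T|g(a,T)| \le (2-a)/50$. The slightly looser constant when $a=0$ than when $a=1$ is accounted for by the fact that $(x+2)^2$ (controlling both the Stirling error denominator and the magnitude of the subleading terms) is smaller at $x=\frac14$ than at $x=\frac34$.

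The main obstacle is the endpoint region $T \approx 5/7$, where $y = T/2 \approx 0.36$ is too small compared to $x+2$ for a naive Taylor expansion to be tight. The cleanest remedy is to establish monotonicity of $Tg(a,T)$ on $[5/7,\infty)$ by sign analysis of its derivative, which can be computed either from the identity $\frac{d}{dz}\ln\Gamma(z) = \psi(z)$ together with the standard asymptotic of $\psi$, or directly by differentiating the explicit formula produced by Lemma~\ref{lem:Stirling}. This reduces the proof to checking two endpoints: the limiting value $\frac{1}{24\pi}$ as $T\to\infty$, and a single numerical evaluation at $T=5/7$ controlled rigorously by the explicit expression and error term from Lemma~\ref{lem:Stirling}.
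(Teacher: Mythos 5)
Your opening moves match the paper's exactly: apply Lemma~\ref{lem:Stirling} with $x=\tfrac14+\tfrac a2$, $y=T/2$, and observe that the $y\log(y/e)$ and $\tfrac{\pi}{2}(x-\tfrac12)$ terms cancel the $-\tfrac{T}{\pi}\log\tfrac{T}{2e}$ and $-\tfrac{2a-1}{4}$ in the definition of $g$. Your computation of the $1/y$-coefficient $-\tfrac{x^2}{2}+\tfrac{x}{2}-\tfrac{1}{12}$, its common value $\tfrac{1}{96}$ at $x=\tfrac14$ and $x=\tfrac34$, and the resulting asymptotic $g(a,T)\sim\tfrac{1}{24\pi T}$ with $\tfrac1{24\pi}<\tfrac1{50}$ are all correct. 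Where you part company with the paper is the final verification: the paper simply hands the explicit remaining expression (their display~\eqref{eq:gaT estimate}) to rigorous interval arithmetic on subintervals of $[5/7,\infty)$, whereas you propose a pencil-and-paper route through truncated Taylor series with explicit remainders plus a monotonicity argument.

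The gap is in that last step. You correctly note that a Taylor expansion in $1/y$ is hopeless near $T=5/7$, since $y\approx 0.36$ while $x+2\ge 2.25$, so you fall back on asserting that $T\,g(a,T)$ is monotone on $[5/7,\infty)$ and only the two endpoints need checking. But you never establish that monotonicity, and it is not at all routine: $\tfrac{d}{dT}\bigl(Tg(a,T)\bigr)=g(a,T)+\tfrac{T}{\pi}\bigl[\Re\psi(\tfrac14+\tfrac a2+i\tfrac T2)-\log\tfrac{T}{2}\bigr]$, whose sign on all of $[5/7,\infty)$ requires either another Stirling-type estimate with explicit remainder \emph{and} a separate treatment of the small-$T$ regime, or precisely the kind of rigorous numerical enclosure you were trying to avoid. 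Without that, the step from the $T\to\infty$ asymptotic to the uniform bound has not been closed; the part of the proposition that actually has content is the small-$T$ range, and that is exactly where your argument is still an announcement rather than a proof. To repair this you would need to (i) prove the sign of the derivative directly, or (ii) produce explicit, elementary two-sided enclosures of the $\arctan$ and $\log$ terms valid for $y$ as small as $5/14$ (e.g.\ via rational Pad\'e-type bounds, as the paper uses elsewhere), or (iii) accept the paper's interval-analysis resolution.
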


\begin{proof}
  We need only apply Lemma~\ref{lem:Stirling} with
  $x=\frac a2+\frac 14$ and $y=T/2$, finding that $g(a,T)$ is within
  $\frac{(8+6\pi)/45}{(81+40a+4T^2)^{3/2}}$ of
  \begin{multline}\label{eq:gaT estimate}
    \frac{16+12\pi -60 T \sqrt{40 a+4 T^2+81}}{45 \pi  \left(81+40 a+4 T^2\right)^{3/2}}+\frac{T}{2\pi} \log \left(1+\frac{40 a+81}{4 T^2}\right)  \\
    +\frac{2}{\pi} \left(\arctan\frac{2 a+1}{2 T}+\arctan\frac{2 a+5}{2 T}
      -\left(\frac{a}{2}+\frac{7}{4}\right) \arctan\frac{2 a+9}{2
        T}\right).
  \end{multline}
  Proving the four inequalities (upper and lower, $a=0$ and $a=1$) is a typical problem for interval analysis.
\end{proof}

A number of times in this work we will assert that some inequality is true ``by interval analysis". Full details are available in Mathematica notebooks on the website
\begin{center}
\texttt{\href{http://www.nt.math.ubc.ca/BeMaObRe2/}{\url{http://www.nt.math.ubc.ca/BeMaObRe2/}}}
\end{center}
but we wish to indicate the idea behind this under-utilized technique here. One extends the domain of some primitive real functions (like addition, multiplication, arctangents, logarithms, etc.) to include intervals, and so that
  \[f(X_1,\dots,X_n) = \{ f(x_1,\dots,x_n) \colon x_i \in X_i\}.\]
The fundamental theorem of interval analysis says that if $h$ is defined by a composition of primitive functions and $x_i \in X_i$, then
  \[h(x_1,\dots,x_n) \in h(X_1,\dots,X_n).\]
For instance,~\eqref{eq:gaT estimate}, multiplied by $T$ and with $a=0$ and $T=[1,\frac{129}{128}]$ becomes
  \begin{multline*}
  \bigg[ \frac{12 \pi +16-\frac{5805 \sqrt{38713}}{2048}}{3825 \sqrt{85} \pi }+\frac{\log \left(\frac{38713}{1849}\right)}{2 \pi }+\frac{2 \arctan\frac{25026}{8321}-\frac{7}{2} \arctan\frac{9}{2}}{\pi }  ,\\
  \frac{129}{128} \left(\frac{262144 \left(12 \pi +16-60 \sqrt{85}\right)}{47036295 \sqrt{38713} \pi }+\frac{129 \log \frac{85}{4}}{256 \pi }+2-\frac{2\arctan 12 +\frac{7}{2} \arctan \frac{192}{43}}{\pi }\right)\bigg],
  \end{multline*}
a subset of $[0.022,0.035]$. Also, $T \cdot \frac{(8+6\pi)/45}{(81+40a+4T^2)^{3/2}}$ becomes
  \[
  \bigg[ \frac{262144 (8+6 \pi )}{47036295 \sqrt{38713}},\frac{43 (8+6 \pi )}{163200 \sqrt{85}} \bigg] \subseteq [ 0.0007,0.0008].\]
This computation then constitutes a proof that $0.0213 \leq Tg(0,T)\leq 0.0358$ for $1\leq T \leq \frac{129}{128}$. It should be noted that this proof works without floating point arithmetic, except at moments when one needs to decide which of two expressions represents a smaller number.

One can then proceed to a proof for all $T\geq 5/7$ by breaking the interval $[5/7,\infty)$ into sufficiently small intervals. By the definition of uniform continuity, if the domain is broken into sufficiently small pieces, then interval arithmetic will yield a sufficiently tight bound on the range of the function. There is a theoretical and a practical difficulty with this paradigm for generating proofs of inequalities. The theoretical problem is that we need not only the function to be uniformly continuous, but for every sub-computation involved to be uniformly continuous. This may require cleverly rewriting the expression or by introducing more primitive functions, each such introduction requiring some (usually easy) calculus proof. 

The practical difficulty that arises is that ``sufficiently small pieces'' can quickly become too numerous to be useful. This can be partially addressed by rewriting the expression, but also by introducing a simple expression between the target function and the planned bound. For example,
  \[\frac{T}{2\pi} \log \left(1+\frac{81}{4 T^2}\right)  \leq \frac{T}{2\pi} \times \frac{81}{4 T^2} = \frac{81}{8\pi T},\]
and for $T=[100,200]$ this improves the naive interval arithmetic upper bound of $\frac{100}\pi \log \left(\frac{40081}{40000}\right) \approx 0.064$ to $\frac{81}{800\pi}\approx 0.032$. That  is, a theoretically tighter bound in real arithmetic may be theoretically worse in interval arithmetic. The best expression to use may even depend on the specific interval under consideration.

In the course of our interval analysis bounds in this paper, we use Alirezaei's uncommonly sharp bounds for $\arctan x $~\cite{Alirezaei} and Tops\o e's Pad\'e-inspired bounds for $\log(1+x)$~\cite{Topsoe}.

\begin{definition}\label{def:E}
  For $a\in\{0,1\}$, $d\geq 0$ and $T\geq 5/7$, we define
  \[
    {\mathcal E}(a,d,T) \coloneqq  \left| \Im\ln\Gamma(\tfrac{\sigma+a+iT}{2})
      \Big|_{\sigma=1/2}^{1/2+d} + \Im
      \ln\Gamma(\tfrac{\sigma+a+iT}{2}) \Big|_{\sigma=1/2}^{1/2-d}
    \right|.
  \]
  We set $E(a,d,T)$ to be the expression given in Figure~\ref{fig:def of E}, so that 
  Lemma~\ref{lem:Stirling} applied to the definition of ${\mathcal
    E}(a,d,T)$ gives
  \[ {\mathcal E}(a,d,T) \leq E(a,d,T) \] for $0\leq d < \frac 92$ and $T>0$. While $E$ contains many terms, they are each easy to
  work with computationally. Figure~\ref{fig:Epic} shows $E$ for typical arguments.
\end{definition}

	\begin{figure}
		\begin{align*}
		E(a,d,T) \coloneqq &
		\frac{2 T/3}{(2 a+2 d+17)^2+4 T^2}+\frac{2 T/3}{(2 a-2 d+17)^2+4 T^2}\\
		&-\frac{4 T/3}{(2 a+17)^2+4 T^2} +\frac{T}{2}  \log \left(1+\frac{(2 a+17)^2}{4 T^2}\right) \\	
		&-\frac{T}{4}  \log \left(1+\frac{(2 a+2 d+17)^2}{4 T^2}\right)
		-\frac{T}{4}  \log \left(1+\frac{(2 a-2 d+17)^2}{4 T^2}\right)\\
		&+\frac{(8+6 \pi)/45 }{\left((2 a+2 d+17)^2+4 T^2\right)^{3/2}}	\\
		&+\frac{(8+6 \pi)/45 }{\left((2 a-2 d+17)^2+4 T^2\right)^{3/2}}
		+\frac{2 (8+6 \pi )/45}{\left((2 a+17)^2+4 T^2\right)^{3/2}}\\	
		&+2 \arctan \frac{2 a+1}{2 T}-\arctan \frac{2 a+2 d+1}{2 T}-\arctan \frac{2 a-2 d+1}{2 T}\\
		&+2 \arctan \frac{2 a+5}{2 T}-\arctan \frac{2 a+2 d+5}{2 T}-\arctan \frac{2 a-2 d+5}{2 T}\\
		&+2 \arctan \frac{2 a+9}{2 T}-\arctan \frac{2 a+2 d+9}{2 T}-\arctan \frac{2 a-2 d+9}{2 T}\\
		&+2 \arctan \frac{2 a+13}{2 T}-\arctan \frac{2 a+2 d+13}{2 T}-\arctan \frac{2 a-2 d+13}{2 T}\\
		&+\frac{2 a+2 d+15}{4}  \arctan \frac{2 a+2 d+17}{2 T}\\
		&+\frac{2 a-2 d+15}{4}  \arctan \frac{2 a-2 d+17}{2 T}
		-\frac{2 a+15}{2}  \arctan \frac{2 a+17}{2 T}
		\end{align*}
		\caption{Definition of $E(a,d,T)$, for $a\in \{0,1\}$,    $0\leq d < 9/2$, $T\ge 5/7$.}\label{fig:def of E}
	\end{figure}
	\begin{figure}
		\includegraphics[width=2in]{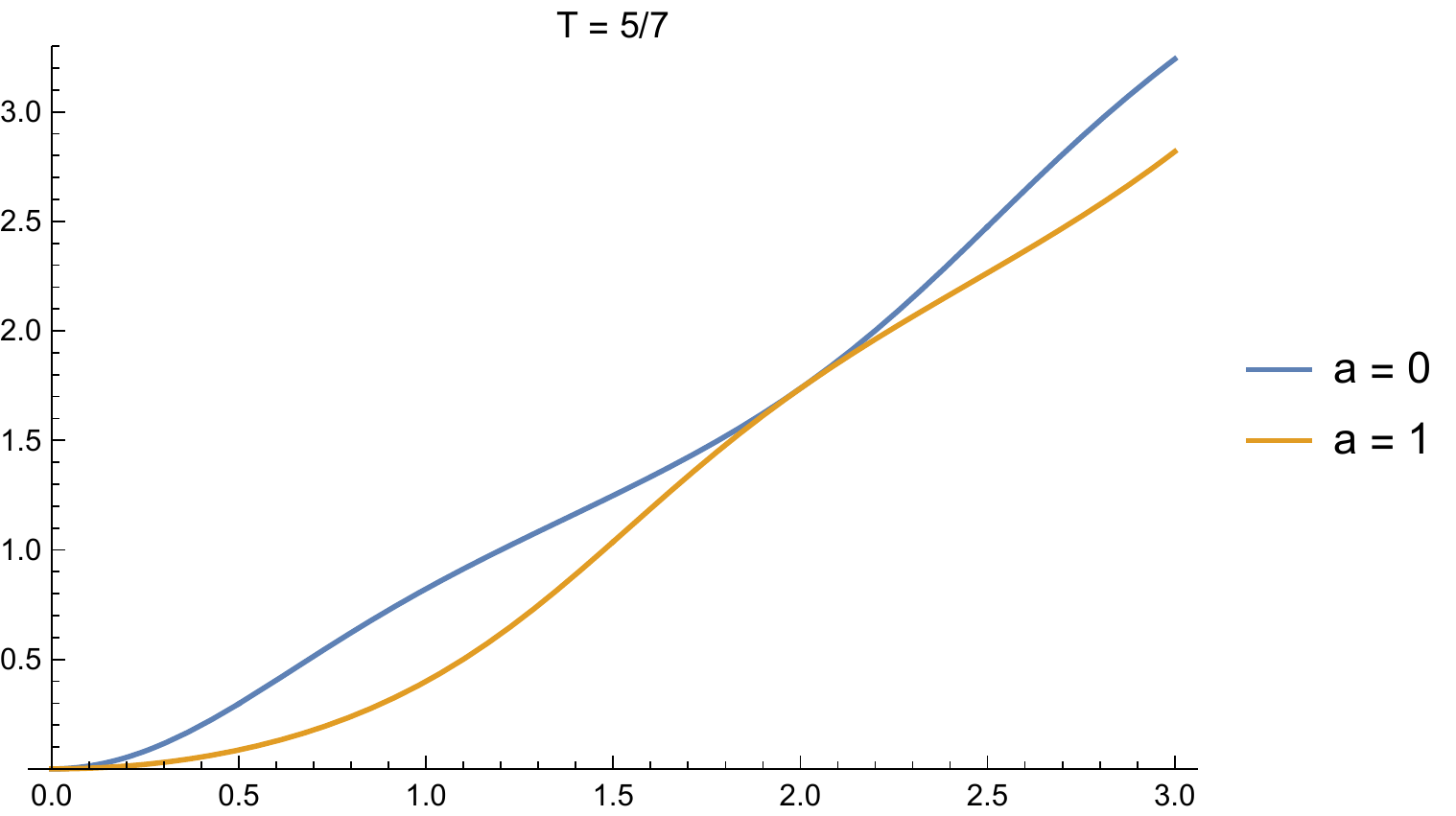}
		\includegraphics[width=2in]{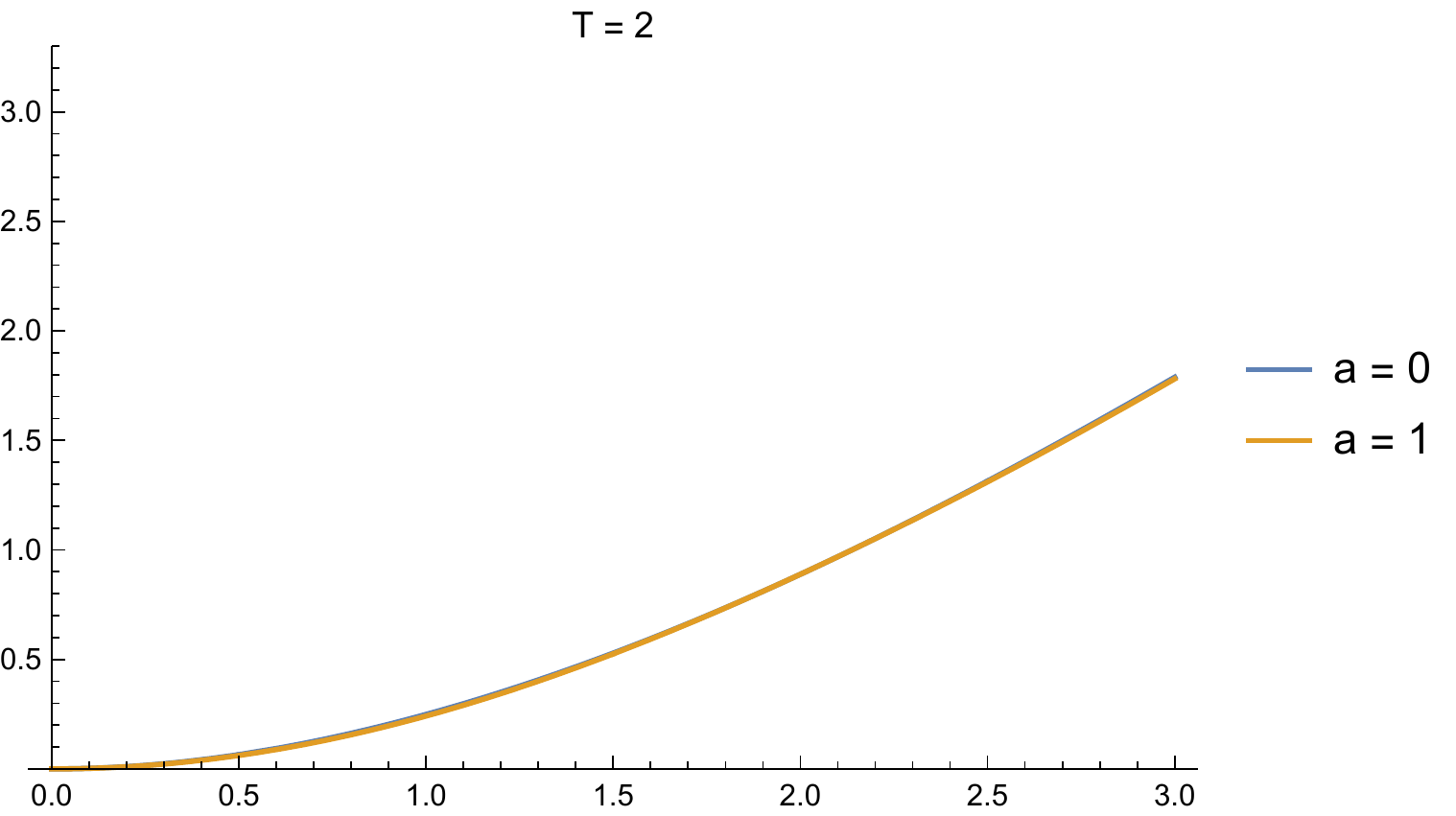}
		\caption{Graphs of ${\mathcal E}(a,d,T)$ for $T=5/7$ and
			$T=2$.}\label{fig:Epic}
	\end{figure}
	
\begin{lemma}\label{lem:E bound}
	Suppose that $0 \le \delta_1 \leq d<\frac 92$, with  $a\in\{0,1\}$ and $T\geq \frac 57$. Then
	\[0< E(a,\delta_1,T) \leq E(a,d,T).\]
	For $a\in\{0,1\}$, $\frac14 \leq d \leq \frac 58$ and $T\ge \frac 57$, 
	\[\frac{E(a,d,T)}{\pi} \leq \frac{(640+216a)d-112-39a}{1536(3T+3a-1)}+\frac{1}{2^{10}}.\]
\end{lemma}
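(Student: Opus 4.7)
At $d=0$ every ``symmetric-combination'' term in the definition of $E$ collapses (since $2a+2d+k$ and $2a-2d+k$ both equal $2a+k$), leaving only the three Stirling-remainder summands, which combine to
\[E(a,0,T)=\frac{4(8+6\pi)/45}{((2a+17)^2+4T^2)^{3/2}}>0.\]
For monotonicity in $d$, I would show $\partial_d E\ge 0$ on $[0,9/2)$ for $T\ge 5/7$ and $a\in\{0,1\}$ by grouping the terms of $E$ according to the five underlying functions
\[\phi_A(y)=\tfrac{2T/3}{y^2+4T^2},\qquad \phi_B(y)=\tfrac{T}{4}\log\bigl(1+\tfrac{y^2}{4T^2}\bigr),\qquad \phi_C(y)=\tfrac{(8+6\pi)/45}{(y^2+4T^2)^{3/2}},\]
\[\phi_D(y)=\arctan\tfrac{y}{2T},\qquad \phi_E(y)=\tfrac{y-2}{4}\arctan\tfrac{y}{2T}.\]
A Taylor expansion in $1/T$ shows that the $\phi_B$ contribution $-\tfrac{d^2}{2T}$ combines with the $\phi_E$ contribution $+\tfrac{d^2}{T}$ to give $E(a,d,T)=\tfrac{d^2}{2T}+O(T^{-3})$, with $\phi_A,\phi_C,\phi_D$ contributing only at order $T^{-3}$. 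Differentiating, $\partial_d E=\tfrac{d}{T}+O(T^{-3})>0$ once $T$ exceeds an explicit threshold $T_0$. On the complementary compact cell $(T,d,a)\in[5/7,T_0]\times[0,9/2)\times\{0,1\}$, I would verify $\partial_d E\ge 0$ by interval analysis, leaning on the Tops\o e and Alirezaei bounds (referenced in the previous section) to keep the subdivision tractable.

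\textbf{Linear upper bound (Part 2).} The right-hand side of the claimed bound is asymptotically $\tfrac{(640+216a)d-112-39a}{4608T}+2^{-10}$, while the leading asymptotic is $E(a,d,T)/\pi\sim d^2/(2\pi T)$. For $d\in[1/4,5/8]$ the coefficients are compatible with substantial slack, so the inequality holds for all $T\ge T_1$ (with $T_1$ explicit) once one controls the next-order terms in $1/T$ via the Stirling remainder of Lemma~\ref{lem:Stirling}. On the complementary compact region $d\in[1/4,5/8]$, $T\in[5/7,T_1]$, $a\in\{0,1\}$, the inequality is closed by interval analysis; the constant $2^{-10}$ provides cushion both for the subdivision and for the asymptotic matching at the seam $T=T_1$.

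\textbf{Main obstacle.} The hard part is certifying monotonicity uniformly over the unbounded range $T\ge 5/7$: the second derivatives $\phi_A'',\phi_B'',\phi_C'',\phi_E''$ each change sign depending on how $T$ compares with $y$, so no single pointwise convexity argument works throughout. The cleanest route is to split into a large-$T$ regime where the asymptotic $d^2/(2T)$ dominates and a bounded-$T$ regime handled by interval arithmetic; the technical work is quantitative control over the $O(T^{-3})$ remainder so that $T_0$ can be chosen small enough to make the interval computation finish in reasonable time.
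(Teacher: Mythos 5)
Your computation of $E(a,0,T)=\dfrac{4(8+6\pi)/45}{((2a+17)^2+4T^2)^{3/2}}>0$ is correct, and your Taylor analysis in $1/T$ — the $-d^2/(2T)$ from the logarithmic block cancelling against $+d^2/T$ from the weighted arctangents, with the remaining blocks entering only at $O(T^{-3})$ — matches the actual leading behavior. Your overall strategy (asymptotics for large $T$ plus interval arithmetic on a compact region) is viable. The paper's proof is also an interval-arithmetic argument, but it is organized differently: rather than expanding in $1/T$, the authors form the normalized quantity $T\cdot E(a,d,T)$ (so the expression stays bounded away from $0$ and $\infty$ as $T\to\infty$), and then for each $(a,T)$ they use a \emph{degree-3 Taylor model in $d$ centered at $d=0$}, bounding the fourth $d$-derivative by the Moore--Skelboe algorithm. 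The motivation for a Taylor model at $d=0$ is that $E$ is even in $d$, so $\partial_d E(a,0,T)=0$ and naive interval arithmetic near $d=0$ is inconclusive; the Taylor model resolves the degeneracy.

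This same degeneracy is the one loose thread in your sketch. You write $\partial_d E=\tfrac{d}{T}+O(T^{-3})$, but if the remainder were genuinely of size $T^{-3}$ uniformly in $d$, it would dominate $d/T$ near $d=0$ and the sign argument would fail there. What saves you is parity: every group of terms in $E$ is even in $d$, so $\partial_d E$ vanishes identically at $d=0$, and the remainder is in fact $O(dT^{-3})$, not $O(T^{-3})$. You should say this explicitly — you need $\partial_d E = \tfrac{d}{T}\bigl(1+O(T^{-2})\bigr)$ — otherwise the large-$T$ regime is not closed. The paper sidesteps this issue entirely by working with a centered Taylor polynomial in $d$. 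With that fix, your approach and the paper's are two reasonable ways to organize what is, in both cases, a proof that must ultimately be delegated to a rigorous interval-arithmetic computation.
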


\begin{proof}
  This is proved using interval analysis. For fixed $a$ and $T$, a degree 3 Taylor model with center $0$ is used to show that the derivative of $T\cdot  E(a,d,T)$ with respect to $d$ is positive for small $d$  (using some algebra and the Moore-Skelboe algorithm to bound the 4th  derivative of $E$ with respect to $d$), and the Moore-Skelboe algorithm for larger $d$. As $E(a,0,T)>0$, this shows that $E$ is positive. Consult the website for details.
\end{proof}

% -----------------------------------------------------------------------------
\section{Bounds on $L(s,\tau)$, and Bounds on $f_m(s)$} \label{sec-bounds}
%-----------------------------------------------------------------------------

\subsection{Introduction of the auxiliary function $f_m$.}
We will construct a function that has many zeros if $\arg L$ changes substantially on the interval on $[\frac12+iT,\sigma_1+iT]$.  To wit, let $m$ be a large
integer, and define, for $s$ a complex number, 
\[f_m(s) \coloneqq  \frac1{2} \big( L(s+iT,\tau)^m +
  L(s-iT,\bar\tau)^m\big).\] 
  Notice the use of $s \pm iT$, rather than
$\sigma\pm iT$; this is done so that $f_m$ is holomorphic.  Note further that, for real
arguments, $f_m$ simplifies nicely:
\begin{align*}
  f_m(\sigma) &=  \frac1{2}\big( L(\sigma+Ti,\tau)^m + L(\sigma-Ti,\bar\tau)^m\big) \\
              & =\frac1{2}\big( L(\sigma+Ti,\tau)^m + \overline{ L(\sigma+Ti,\tau)^m}\big) \\
              &=\Re L(\sigma+Ti,\tau)^m.
\end{align*}
The tactic we will employ follows McCurley \cite{McCurley}. If $\arg L$ changes, then $\arg L^m$ changes $m$
times more, and this causes $L(\sigma+Ti,\tau)^m$ to be purely
imaginary many times, whereby $f_m$ will have many real zeros. We use
Jensen's formula to bound the number of zeros in terms of an integral
of $\log |f_m|$, and then bound the integral using a variety of estimates, trivial and
non-trivial. As $m\to\infty$, this tactic captures the total
variation of $\arg L$, which is sometimes as small as the net change.

\begin{definition}\label{def:n}
  We define $n_m$ to be that integer (depending on $m$) for which
  \begin{equation}\label{line:define n}
    n_m \le \frac 1 \pi \left| \arg L(\sigma+iT,\tau)^m\Big|_{\sigma=\sigma_1}^{1/2}\right| < n_m+1.
  \end{equation}
  Since  we will take $m\to \infty$, the reader will be well-served to
  think of $n_m$ as being very large. We will find upper and lower
  bounds for $n_m/m$.
\end{definition}
  
\begin{lemma}\label{lem:n/m meaning}
  The function $f_m$ has as at least $n_m$ zeros on the real segment
  $[\tfrac12,\sigma_1]$, and
  \[ \frac {n_m}m \leq \frac 1\pi \left| \arg
      L(\sigma+iT,\tau)\Big|_{\sigma=\sigma_1}^{1/2} \right| <
    \frac{n_m+1}m.\]
\end{lemma}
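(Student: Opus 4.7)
First I would dispatch the second inequality, which is essentially immediate from the definition of $n_m$. Because $T$ was chosen not to be the height of a zero of $L(s,\tau)$, the function $L(\sigma+iT,\tau)$ does not vanish on the horizontal segment $\sigma\in[1/2,\sigma_1]$. One can therefore fix a continuous branch of $\sigma\mapsto\arg L(\sigma+iT,\tau)$ on this segment, and with that choice $\arg L(\sigma+iT,\tau)^m$ differs from $m\arg L(\sigma+iT,\tau)$ only by an additive constant in $2\pi\ZZ$ that cancels when one takes the net change between the two endpoints. Consequently
\[
\left|\arg L(\sigma+iT,\tau)^m\Big|_{\sigma=\sigma_1}^{1/2}\right| = m\left|\arg L(\sigma+iT,\tau)\Big|_{\sigma=\sigma_1}^{1/2}\right|,
\]
and dividing the defining inequality~\eqref{line:define n} through by $m\pi$ yields the desired sandwich for $n_m/m$.

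For the first assertion I would appeal to the intermediate value theorem. By the real-line identity $f_m(\sigma)=\Re L(\sigma+iT,\tau)^m$ established just above the lemma, $f_m(\sigma)=0$ precisely when $L(\sigma+iT,\tau)^m$ is purely imaginary, equivalently when the continuous function $h(\sigma)\coloneqq \arg L(\sigma+iT,\tau)^m$ lies in $\tfrac{\pi}{2}+\pi\ZZ$. By the definition of $n_m$, one has $|h(1/2)-h(\sigma_1)|\ge \pi n_m$, so by IVT the image $h([1/2,\sigma_1])$ contains the closed interval spanned by $h(1/2)$ and $h(\sigma_1)$, which has length at least $\pi n_m$. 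Since the members of $\tfrac{\pi}{2}+\pi\ZZ$ are spaced exactly $\pi$ apart, any closed real interval of length at least $\pi n_m$ contains at least $n_m$ of them; each such target value is attained at some $\sigma\in[1/2,\sigma_1]$, producing a zero of $f_m$. Because the target values themselves are distinct, the associated $\sigma$'s are distinct, and so $f_m$ has at least $n_m$ zeros on $[1/2,\sigma_1]$.

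The main bookkeeping point requiring care is the closed-versus-open distinction in the counting step: an \emph{open} interval of length exactly $\pi n_m$ can miss as many as one member of $\tfrac{\pi}{2}+\pi\ZZ$, which would give only $n_m-1$ zeros and would be fatal. The fact that $[1/2,\sigma_1]$ is closed and $h$ is continuous ensures that $h([1/2,\sigma_1])$ really is a closed interval, so the count of at least $n_m$ is guaranteed. Beyond this minor issue, there is no substantive analytic obstacle; the lemma follows directly from IVT and the pre-existing identity $f_m(\sigma)=\Re L(\sigma+iT,\tau)^m$ on the real axis.
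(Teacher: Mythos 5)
Your proof is correct and follows essentially the same route as the paper's: the second inequality comes from $\arg L^m = m\arg L$ along a continuous branch together with the defining inequalities for $n_m$, and the zero count comes from noting that $f_m(\sigma)=0$ exactly when $\arg L(\sigma+iT,\tau)^m\in\tfrac{\pi}{2}+\pi\ZZ$ and that the argument sweeps through at least $n_m$ such values. The paper states the counting step more tersely, simply asserting that $\tfrac12+\tfrac1\pi\arg L(\sigma+iT,\tau)^m$ is an integer for at least $n_m$ values of $\sigma$, whereas you spell out the intermediate-value argument and the closed-interval bookkeeping explicitly; both are sound.
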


%Here and throughout we use $\approx$ and $\lessapprox$ to indicate
%that an equality (or inequality) becomes true as $m\to \infty$. If you
%like nonstandard analysis, you could take $m$ to be a hyperfinite
%integer, and $\approx$ takes on its usual nonstandard meaning of ``infinitesimely
%close''.

\begin{proof}
  By Definition~\ref{def:n}, we know that the expression
  $\frac12+ \tfrac1\pi \arg L(\sigma+iT,\tau)^m$ is an integer for at least
  $n_m$ different values of $\sigma$ in the interval
  $[1/2,\sigma_1]$. In other words,
  $$
  f_m(\sigma) = \Re L(\sigma+iT,\tau)^m=0
  $$
  for at least $n_m$
  different values of $\sigma$.

  As $m$ is an integer, we have that $\arg L(s,\tau)^m = m \arg
  L(s,\tau)$. Thus, dividing the inequalities in
  line~\eqref{line:define n} by $m$ leads to the desired conclusion.
  \end{proof}

If we had defined $f_m$ with subtraction instead of addition, thereby
picking out the imaginary part instead of the real part of
$L(s,\tau)$, the analogue of the proof of Lemma~\ref{lem:center term} would not be valid.

\begin{lemma}\label{lem:center term}
  For any real $c>1$, there is an infinite sequence of integers $m$
  with $f_m(c) \neq 0$, and moreover, along that sequence
  \[\lim_{m} \bigg( {-}\frac 1m \log |f_m(c)| \bigg) \leq \log
    \frac{\zeta(c)}{\zeta(2c)}.\]
\end{lemma}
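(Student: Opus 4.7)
The plan is to use the simplification $f_m(\sigma) = \Re L(\sigma+iT,\tau)^m$ established in the preamble. Since $c > 1$, the Euler product shows $L(c+iT,\tau)\neq 0$, so I write $L(c+iT,\tau) = re^{i\theta}$ with $r > 0$, whereupon
\[
f_m(c) = r^m \cos(m\theta).
\]
Lemma~\ref{lem:EPB} yields $r \geq \zeta(2c)/\zeta(c)$, so $-\log r \leq \log(\zeta(c)/\zeta(2c))$. The claim therefore reduces to producing an infinite set of positive integers $m$ on which $\cos(m\theta)\neq 0$ and $\frac{1}{m}\log|\cos(m\theta)| \to 0$, since along such a sequence
\[
-\frac{1}{m}\log|f_m(c)| = -\log r - \frac{1}{m}\log|\cos(m\theta)| \longrightarrow -\log r \leq \log\frac{\zeta(c)}{\zeta(2c)}.
\]

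Constructing this sequence is the one step to take care over, and I would handle it by splitting on whether $\theta/\pi$ is rational. If $\theta/\pi = p/q$ in lowest terms, taking $m$ in the arithmetic progression $2q\ZZ^+$ gives $\cos(m\theta) = 1$ identically, so the cosine term contributes nothing. If $\theta/\pi$ is irrational, Weyl's equidistribution theorem implies that $\{m\theta/\pi \bmod 1\}_{m\ge 1}$ is uniformly distributed on $[0,1)$; consequently the set $\{m : |\cos(m\theta)| \geq \tfrac12\}$ has positive density (in fact $\tfrac23$) and so is infinite, and along it $|\log|\cos(m\theta)||$ is uniformly bounded, so dividing by $m$ produces the required vanishing.

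The main thing to be careful about is the possibility that $\cos(m\theta)$ vanishes for every positive integer $m$; this would force $\theta \equiv \pi/2 \pmod{\pi}$, but then $\cos(2\theta)= -1 \neq 0$, so the construction above always produces at least one good sequence. This is precisely the point of the remark immediately following the lemma: the imaginary-part analogue $\Im L^m = r^m \sin(m\theta)$ vanishes identically in $m$ exactly when $\theta \in \pi\ZZ$—i.e.\ whenever $L(c+iT,\tau)$ is real—and no subsequence trick can rescue that variant, so defining $f_m$ via the real part (the sum rather than the difference) is essential.
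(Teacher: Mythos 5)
Your proof is correct and follows essentially the same route as the paper's: both write $L(c+iT,\tau)$ in polar form, observe that $|f_m(c)|$ equals the modulus times $|\cos(m\cdot\text{angle})|$, lower-bound the modulus via Lemma~\ref{lem:EPB}, and extract a subsequence of $m$ along which the cosine factor stays bounded away from zero. The paper simply asserts the existence of such a subsequence, while you supply the (correct, if slightly heavier than necessary) rational/irrational dichotomy with Weyl equidistribution; your closing observation about why the imaginary-part variant fails matches the paper's remark exactly.
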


\begin{proof}
  Define $K$ and $\psi$ by $L(c+T i,\tau) = Ke^{\psi i}$. Since
  $L(s,\tau)\neq 0$ for $\sigma> 1$ and $c>1$, we know that $K>0$, and
  also $L(c-Ti,\bar\tau)=\overline{L(c+iT,\tau)}=Ke^{-\psi i}$. We
  have
  \[
    \frac{f_m(c)}{L(c+Ti)^m} = \frac 1{2} \left(1 +
      \frac{L(c-Ti,\bar\tau)^m}{L(c+Ti,\tau)^m}\right) = \frac 1{2}
    \left(1 + e^{-2m\psi i} \right).
  \]
  Whatever the value of $\psi$, there is a sequence of values of~$m$ with the property that
  $-2m\psi \to 0 \mod {2\pi}$, whence
  $\frac{f_m(c)}{L(c+Ti)^m} \to 1$.

  For $\sigma>1$, we have the Euler product bound from
  Lemma~\ref{lem:EPB}
  \[\frac{\zeta(2\sigma)}{\zeta(\sigma)} \leq |L(\sigma+it,\tau)|.\]
  This translates, for $m$ a sequence of integers with the above
  property,  into the bound
  \[
    1 =\lim_m \frac{f_m(c)}{L(c+Ti)^m} \leq \lim_m
    \left|\frac{f_m(c)}{\big(\zeta(2c)/\zeta(c) \big)^m} \right|, \]
  which becomes
  \[ 0 \leq \lim_m \log |f_m(c)| - m \log
    \frac{\zeta(2c)}{\zeta(c)},\] 
completing the proof.
\end{proof}

% -----------------------------------------------------------------------------
\section{Jensen's formula} \label{sec-jensen}
%-----------------------------------------------------------------------------

We apply Jensen's formula to the sequence of functions
$f_m(s)$ and the open disk $D(c,r)$ with center~$c$ and radius~$r$. Here, $m$ ranges
through the sequence of positive integers defined in Lemma~\ref{lem:center term}. As
$\tau$ and $\bar\tau$ are nonprincipal, each $f_m$ is entire, and in
particular holomorphic on $D(c,r)$. Let $\Zero_m(X)$ be the multiset
of zeros of $f_m$ in the set $X\subseteq \CC$.  Let
\begin{equation}\label{def:S}
S_m(c,r) \coloneqq \frac 1m \sum_{z\in \Zero_{m}(D(c,r))} \log \frac r{|z-c|}.
\end{equation}
In our setting and notation, Jensen's
formula is as follows.

\begin{theorem}[Jensen's formula] Let $c\in\CC$, and let $r>0$ be
  real. If $f_m(c)\neq 0$, then
  $$S_m(c,r) = -\frac 1m \log|f_m(c)| + \frac{1}{2\pi} \int_{-\pi}^{\pi} \frac1m \log| f_m(c+r e^{i\theta})|\,d\theta.$$
\end{theorem}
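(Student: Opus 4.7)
The plan is to deduce the stated identity as a direct rescaling of the classical Jensen formula applied to the entire function $f_m$ on the closed disk $\overline{D(c,r)}$. Recall the classical statement: if $g$ is holomorphic on a neighborhood of $\overline{D(c,r)}$, with $g(c)\neq 0$ and with no zeros on the boundary circle, then
\begin{equation*}
\log|g(c)| + \sum_{z \in \Zero(g)\cap D(c,r)} \log\frac{r}{|z-c|} = \frac{1}{2\pi}\int_{-\pi}^{\pi} \log|g(c+re^{i\theta})|\,d\theta,
\end{equation*}
with zeros counted by multiplicity.

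First I would verify the hypotheses for $g=f_m$. Since $\tau$ and $\bar\tau$ are nonprincipal, $L(s,\tau)$ and $L(s,\bar\tau)$ are entire, so $f_m(s) = \tfrac12\bigl(L(s+iT,\tau)^m + L(s-iT,\bar\tau)^m\bigr)$ is entire and in particular is holomorphic on any closed disk. By hypothesis $f_m(c)\neq 0$.

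Next I would apply the classical Jensen formula to $f_m$, then divide through by $m$ and move $\frac 1m\log|f_m(c)|$ to the right-hand side; the sum on the left becomes precisely $S_m(c,r)$ as given in \eqref{def:S}, and the integral on the right becomes $\frac{1}{2\pi}\int_{-\pi}^{\pi}\frac 1m\log|f_m(c+re^{i\theta})|\,d\theta$. This yields the stated identity.

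The only subtlety is the possibility that $f_m$ has zeros on the boundary circle $|s-c|=r$, in which case one of the summands in $S_m(c,r)$ would vanish but the integrand would diverge (integrably). The standard way to handle this is a limiting argument: apply Jensen's formula for radii $r'<r$ slightly smaller than $r$ (where the boundary is zero-free, since the zeros of $f_m$ are isolated), and let $r'\to r^-$, using dominated convergence on the integral and the fact that zeros with $|z-c|=r$ contribute $0$ to $S_m(c,r)$. This is not expected to be a real obstacle, and indeed this convention (that zeros on the boundary do not contribute) is implicit in the notation $\Zero_m(D(c,r))$, which refers to the open disk.
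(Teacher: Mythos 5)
Your proposal is correct and is exactly the natural derivation: apply the classical Jensen formula to the entire function $f_m$ on $\overline{D(c,r)}$, divide by $m$, and rearrange, with the usual limiting argument handling any zeros on the boundary circle. The paper in fact states this theorem without proof (it is the classical result, merely rewritten in the paper's notation with the $\tfrac 1m$ normalization), so there is no competing argument in the paper to compare against; your writeup supplies precisely the justification the paper leaves implicit.
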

\noindent We apply this to derive an upper bound upon $S_m(c,r)$.

\begin{prop}\label{prop:Jensen}
  Let $c,r$ and $\sigma_1$ be real numbers with
  \[c-r < \frac12 <1 < c < \sigma_1 < c+r,\] and
  $F_{c,r}:[-\pi,\pi]\to\RR$ an even function with
  $F_{c,r}(\theta) \geq \frac1m \log |f_m(c+re^{i\theta})| $.  Then
  \[
    \lim_{m\to\infty} S_m(c,r)
    \leq \log \frac{\zeta(c)}{\zeta(2c)} + \frac1\pi \int_0^\pi F_{c,r}(\theta)\,d\theta.
  \]
\end{prop}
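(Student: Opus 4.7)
The plan is to apply Jensen's formula to $f_m$ on the disk $D(c,r)$ for the infinite sequence of integers $m$ produced by Lemma~\ref{lem:center term}, then bound each of the two terms on the right-hand side separately. The left ``correction'' term is handled directly by Lemma~\ref{lem:center term}, and the integral term is handled by the pointwise hypothesis on $F_{c,r}$ together with the fact that $F_{c,r}$ is even.

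In more detail: since $\tau$ and $\bar\tau$ are nonprincipal, $L(s,\tau)$ and $L(s,\bar\tau)$ are entire, so $f_m$ is entire and in particular holomorphic on the closed disk $D(c,r)$. For the sequence of $m$ selected by Lemma~\ref{lem:center term}, we have $f_m(c)\neq 0$, so Jensen's formula applies and yields
\[
S_m(c,r) \;=\; -\frac1m\log|f_m(c)| \;+\; \frac{1}{2\pi}\int_{-\pi}^{\pi} \frac{1}{m}\log|f_m(c+re^{i\theta})|\,d\theta.
\]
Applying the hypothesis $F_{c,r}(\theta)\ge \tfrac1m\log|f_m(c+re^{i\theta})|$ term by term, and then using that $F_{c,r}$ is even, the integral is bounded by
\[
\frac{1}{2\pi}\int_{-\pi}^{\pi} F_{c,r}(\theta)\,d\theta \;=\; \frac{1}{\pi}\int_{0}^{\pi} F_{c,r}(\theta)\,d\theta.
\]
Passing to the limit along the sequence of $m$ from Lemma~\ref{lem:center term} and invoking that lemma to control $-\tfrac1m\log|f_m(c)|$, the stated bound on $\lim_{m\to\infty} S_m(c,r)$ follows by adding the two estimates.

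There is essentially no obstacle here beyond bookkeeping: the substantive work has already been done in Lemma~\ref{lem:center term} (which gives the $\log\zeta(c)/\zeta(2c)$ term) and in the construction/hypothesis on $F_{c,r}$ (which dominates the boundary integrand pointwise). The only mild subtleties are (i) that the limit in the conclusion is understood along the subsequence for which Lemma~\ref{lem:center term} provides $f_m(c)\neq 0$ and the stated $\limsup$ behavior, and (ii) exploiting evenness of $F_{c,r}$ to fold the integral over $[-\pi,\pi]$ onto $[0,\pi]$, which is what produces the factor $\tfrac1\pi$ rather than $\tfrac{1}{2\pi}$ on the right-hand side. The geometric assumptions $c-r<\tfrac12<1<c<\sigma_1<c+r$ are not used in this particular argument; they serve only to guarantee that the disk $D(c,r)$ is large enough to enclose the real zeros counted by $S_m(c,r)$ via Lemma~\ref{lem:n/m meaning}, which will matter when this proposition is combined with those earlier results in Section~\ref{sec-assembling}.
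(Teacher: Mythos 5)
Your proof is correct and is essentially the paper's argument: the paper states Jensen's formula just before the proposition and immediately says ``We apply this to derive an upper bound upon $S_m(c,r)$,'' with $m$ ranging over the sequence from Lemma~\ref{lem:center term}; the two terms are then bounded exactly as you do, with Lemma~\ref{lem:center term} controlling $-\tfrac1m\log|f_m(c)|$ in the limit and the pointwise domination by the even function $F_{c,r}$ giving $\tfrac1\pi\int_0^\pi F_{c,r}$. Your observation that the geometric hypotheses $c-r<\tfrac12<1<c<\sigma_1<c+r$ are not actually used in this step, but only later when $S_m(c,r)$ is bounded from below via the real zeros, is also accurate.
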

  
We will give a lower bound on the sum that involves $\frac{n_m}{m}$ in
Section~\ref{sec:Backlund}, and an upper bound on the integral, via an explicit $F_{c,r}$, using
classical and new bounds on $L$-functions in Section~\ref{Jennie}. What will then remain is the work of choosing good values for
$c$, $r$ and $\sigma_1$, which we do in Section~\ref{sec-assembling}.

\subsection{Backlund's trick and the Jensen sum}\label{sec:Backlund}

\begin{lemma}\label{lem:using E}
  Let $ d$ and $T$ be positive real numbers, and ${\mathcal E}(a,d,T)$ be
  as in Definition~\ref{def:E}. Then
  \[\left|\arg L(\sigma+iT,\tau)^m \Big|_{\sigma=1/2}^{1/2+ d}\right|
    < \left| \arg L(\sigma+iT,\tau)^m \Big|_{\sigma=1/2}^{1/2-
        d}\right| + m{\mathcal E}(a_\tau, d,T).\]
\end{lemma}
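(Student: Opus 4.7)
The plan is to use the functional equation as the main lever. Recall
$$\Lambda(s,\tau) = \epsilon(\tau)\,\Lambda(1-s,\bar\tau),$$
which after dividing out the archimedean factors becomes
$$L(s,\tau) = \epsilon(\tau)\,(q/\pi)^{(1-2s)/2}\,\frac{\Gamma((1-s+a_\tau)/2)}{\Gamma((s+a_\tau)/2)}\,L(1-s,\bar\tau).$$
I would pick a continuous branch of $\arg L(\sigma+iT,\tau)$ along the real segment $\sigma\in[1/2-d,1/2+d]$ (valid since we assume $T$ is not the ordinate of a zero) and read off the identity
$$\arg L(s,\tau) = \arg\epsilon(\tau) - T\log(q/\pi) + \Im\ln\Gamma\bigl(\tfrac{1-s+a_\tau}{2}\bigr) - \Im\ln\Gamma\bigl(\tfrac{s+a_\tau}{2}\bigr) + \arg L(1-s,\bar\tau).$$

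Next I would specialize $s=\sigma+iT$ and track the \emph{change} of each term as $\sigma$ traverses from $1/2$ down to $1/2-d$. The first two terms on the right are constants (in particular $\Im((1-2s)/2)\log(q/\pi) = -T\log(q/\pi)$ is independent of $\sigma$), so they contribute $0$. For the last term, I would use $L(1-s,\bar\tau) = L((1-\sigma)-iT,\bar\tau) = \overline{L((1-\sigma)+iT,\tau)}$, so the change on the left path $\sigma:1/2\to 1/2-d$ equals $-1$ times the change on the right path $\sigma':=1-\sigma:1/2\to 1/2+d$. For the Gamma factor $\Im\ln\Gamma((1-s+a_\tau)/2)$, the substitution $\sigma''=1-\sigma$ and the identity $\Im\ln\Gamma(\bar z)=-\Im\ln\Gamma(z)$ convert it into $-\Im\ln\Gamma((\sigma''+iT+a_\tau)/2)$ on the right segment. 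Putting this together gives
$$\arg L(\sigma+iT,\tau)\Big|_{\sigma=1/2}^{1/2-d} + \arg L(\sigma+iT,\tau)\Big|_{\sigma=1/2}^{1/2+d} = -\biggl(\Im\ln\Gamma\bigl(\tfrac{\sigma+a_\tau+iT}{2}\bigr)\Big|_{\sigma=1/2}^{1/2+d} + \Im\ln\Gamma\bigl(\tfrac{\sigma+a_\tau+iT}{2}\bigr)\Big|_{\sigma=1/2}^{1/2-d}\biggr).$$

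Taking absolute values, the right-hand side is exactly $\mathcal{E}(a_\tau,d,T)$ by Definition~\ref{def:E}. Multiplying by $m$ (recall $\arg L^m = m\arg L$) and applying the triangle inequality
$$\bigl|\arg L^m\big|_{1/2}^{1/2+d}\bigr| = \Bigl|\bigl(\arg L^m\big|_{1/2}^{1/2+d} + \arg L^m\big|_{1/2}^{1/2-d}\bigr) - \arg L^m\big|_{1/2}^{1/2-d}\Bigr| \le \bigl|\arg L^m\big|_{1/2}^{1/2-d}\bigr| + m\mathcal{E}(a_\tau,d,T)$$
delivers the claimed bound.

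The only real subtlety (and the place I would be most careful) is the bookkeeping of branches of $\arg L$ and $\Im\ln\Gamma$ as continuous functions along the path: one needs the identity derived from the functional equation to hold as an equality of continuous real-valued functions, not merely modulo $2\pi$. This is fine since we work on a simply connected region avoiding zeros and poles of $L$ and $\Gamma$. The strict inequality in the statement (versus the $\le$ produced by the triangle inequality) is then automatic unless the two terms $\arg L^m\big|_{1/2}^{1/2+d} + \arg L^m\big|_{1/2}^{1/2-d}$ and $-\arg L^m\big|_{1/2}^{1/2-d}$ happen to share a sign; in the generic case, and in all uses, we effectively treat this as $\le$, which is what propagates through the remainder of the argument.
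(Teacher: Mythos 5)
Your proof is correct and takes essentially the same approach as the paper: both proofs exploit the symmetry forced by the functional equation $\Lambda(s,\tau)=\epsilon(\tau)\Lambda(1-s,\bar\tau)$ to show that the two incremental arguments of $L$ on $[1/2-d,1/2]$ and $[1/2,1/2+d]$ sum (up to sign) to the corresponding sum of increments of $\Im\ln\Gamma$, i.e.\ to $\pm\mathcal{E}(a_\tau,d,T)$, and then finish with the triangle inequality. The paper phrases this more compactly via $\arg\Lambda(\sigma+iT,\tau)\big|_{1/2}^{1/2+d}+\arg\Lambda(\sigma+iT,\tau)\big|_{1/2}^{1/2-d}=0$ and then strips off the constant and Gamma pieces, whereas you unfold the functional equation and track each factor individually; the content is the same, and your observation that the argument actually yields $\le$ rather than the stated $<$ applies equally to the paper's version and is harmless downstream.
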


\begin{proof}
  By the functional equation~\eqref{Functional Equation},
  \[ \arg\Lambda(\sigma+iT,\tau) \Big|_{\sigma=1/2}^{1/2+ d} +
    \arg\Lambda(\sigma+iT,\tau) \Big|_{\sigma=1/2}^{1/2- d} = 0.\] 
    Since
  \begin{align*}
    \arg\Lambda(\sigma+iT,\tau)
    &=\arg \left(\frac q \pi \right)^{(s+a_\tau)/2} +\arg \Gamma(\tfrac{s+a_\tau}{2})+ \arg L(s,\tau) \\
    &=\frac t2 \log \frac{q}{\pi} + \Im \ln\Gamma(\tfrac{s+a_\tau}{2})+ \arg L(s,\tau),
  \end{align*}
  we see that the terms
  \[\arg L(\sigma+iT,\tau) \Big|_{\sigma=1/2}^{1/2+ d}
    +\arg L(\sigma+iT,\tau) \Big|_{\sigma=1/2}^{1/2- d} \] and
  \[\Im \ln\Gamma(\tfrac{\sigma+a_\tau+iT}{2})
    \Big|_{\sigma=1/2}^{1/2+ d} + \Im
    \ln\Gamma(\tfrac{\sigma+a_\tau+iT}{2}) \Big|_{\sigma=1/2}^{1/2-
      d}\] add to 0, and so have the same absolute value. This last
  displayed equation has the same absolute value as
  ${\mathcal E}(a_\tau, d,T)$. As
  $\arg L(\sigma+i T,\tau)^m = m \arg L(\sigma+iT)$, we have
  established this lemma.
\end{proof}

We will appeal to the following proposition with rather weak constraints on $c$ and $r$; if $r$ is
much larger than $c$, then we can in fact do slightly better. The source of the error
in~\cite{Trudgian} is in not tracking the constraints on~$c$ and~$r$ and how
they impact the applicability of ``Backlund's trick''.

\begin{prop}[Backlund's trick]\label{prop:Backlund}
  Let $c$ and $r$ be real numbers , and set
  $$
  \sigma_1\coloneqq c+\frac{(c-1/2)^2}{r} \; \mbox{ and } \; \delta\coloneqq 2c-\sigma_1-\frac12.
  $$
  Further, let $E_\delta \coloneqq  E(a_\tau ,\delta ,T)$. If $1<c<r$ and  $0 \leq \delta < \frac 92$, then
  \begin{equation*}
    \left| \arg L(\sigma+iT,\tau) \Big|_{\sigma=\sigma_1}^{1/2}\right|
    \leq \frac{\pi\, S_m(c,r)}{2\log r/(c-1/2) } + \frac{E_\delta}2 +\frac{\pi}{m}.
  \end{equation*}
\end{prop}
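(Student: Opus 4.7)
The strategy is to combine Jensen's formula on $D(c,r)$ with the approximate functional symmetry of $\arg L$ encoded in Lemma~\ref{lem:using E}, reducing first to a count of real zeros of $f_m$ via Lemma~\ref{lem:n/m meaning}. Concretely, it suffices to show $n_m \le mS_m(c,r)/(2\log(r/(c-1/2))) + mE_\delta/(2\pi)$, because Lemma~\ref{lem:n/m meaning} bounds the left-hand side of the stated inequality by $\pi(n_m+1)/m$ and the extra $+1$ is absorbed into the additive $+\pi/m$ in the conclusion.

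The crucial geometric point is that the definitions $\sigma_1 = c+(c-1/2)^2/r$ and $\delta = 2c-\sigma_1-1/2$ are engineered so that $1/2+\delta = 2c-\sigma_1$, making the subinterval $[1/2+\delta,\sigma_1]$ symmetric about $c$ with half-length $(c-1/2)^2/r$. Any real zero $\sigma_0$ of $f_m$ inside this subinterval satisfies $|\sigma_0-c| \le (c-1/2)^2/r$, so its Jensen contribution is at least $\log(r^2/(c-1/2)^2) = 2\log(r/(c-1/2))$; this doubled weight is precisely what produces the factor $2$ in the main term. I would split $[1/2,\sigma_1]$ at $1/2+\delta$ and treat the symmetric piece via this doubled Jensen weight, handling the asymmetric piece $[1/2,1/2+\delta]$ via Lemma~\ref{lem:using E}, which transports its argument change to the reflected interval $[1/2-\delta,1/2]$ at a cost of $m\mathcal{E}(a_\tau,\delta,T) \le mE_\delta$ (using $E \ge \mathcal{E}$ from Definition~\ref{def:E}). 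The reflected interval still lies inside $D(c,r)$: the required containment $1/2-\delta \ge c-r$ rearranges to $(r-(c-1/2))^2 \ge 0$, which is automatic under the hypothesis $c < r$, and its zeros likewise contribute to the Jensen sum.

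The main obstacle will be the fine sign and endpoint accounting. Each interval splitting introduces an $O(1)$ discrepancy between the number of sign changes of $f_m$ and $m|\arg L|/\pi$ on that subinterval, and these slacks must all be consolidated into the single additive $+\pi/m$ in the conclusion. More subtly, extracting the factor $1/2$ in front of $E_\delta$ (rather than a naive $E_\delta$) is a genuine consequence of the doubled Jensen weight on the symmetric piece, and isolating it cleanly seems to require keeping track of signed argument changes via the exact identity $\arg\Lambda|_{\sigma=1/2}^{1/2+\delta} + \arg\Lambda|_{\sigma=1/2}^{1/2-\delta} = 0$ that underlies Lemma~\ref{lem:using E}, rather than immediately invoking its absolute-value form.
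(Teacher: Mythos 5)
Your strategy matches the paper's proof: reduce via Lemma~\ref{lem:n/m meaning} to bounding $n_m/m$, exploit the symmetric positioning of $[\frac12+\delta,\sigma_1]$ about $c$ to get doubled Jensen weight on that piece, and use Lemma~\ref{lem:using E} to pair the zeros in $[\frac12,\frac12+\delta]$ with reflected zeros in $[\frac12-\delta,\frac12]$, whose containment in $D(c,r)$ you correctly verify via $(r-(c-\tfrac12))^2\ge 0$. Your closing worry about needing signed rather than absolute argument changes is unfounded: the paper invokes Lemma~\ref{lem:using E} exactly in its absolute-value form, uses it to bound the number $x'$ of \emph{unpaired} zeros by $mE_\delta/\pi$, and the factor $1/2$ in front of $E_\delta$ then falls out automatically from $S_m \ge \frac{2n_m-x'}{m}\log\frac{r}{c-1/2}$ upon dividing by $2$.
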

\begin{proof}
  The conditions on $c$ and $r$ imply the inequalities 
  $$
  c-r < \frac12-\delta \leq \frac12 \leq
    \frac12+\delta=2c-\sigma_1\le c \le \sigma_1 < c+r.
  $$

  For $z\in D(c,r)$, we see that $\log \frac r{|z-c|} >0$, so that
  \[S_m(c,r) \coloneqq \frac1m \sum_{z\in \Zero_{m}(D(c,r))} \log \frac r{|z-c|} \geq \frac 1m
    \sum_{z\in \Zero_{m}((c-r,\sigma_1])} \log \frac r{|z-c|}.\] We
  will further only consider particular zeros in the real interval
  $(c-r,\sigma_1]$, noting that omitting zeros from the  computation
  weakens rather than invalidates the claimed bound.

  For real $\sigma$, such as those in the interval $(c-r,c+r)$, we have that 
  $$
  f_m(\sigma)=\Re L(\sigma+iT,\tau)^m, 
  $$
  and so for
  $\sigma \in \Zero_{m}((c-r,c+r))$, we have
  \[0=f_m(\sigma ) = \Re L(\sigma + iT, \tau)^m,\] whence
  $\arg f_m(\sigma) = \frac{\pi}{2} + j\pi$ for some integer $j$. By
  the definition of $n_m$, we are then guaranteed at least $n_m$
  values of $\sigma$ in the interval $[1/2,\sigma_1]$ with
  $f_m(\sigma)=0$.

  For $1\leq k \leq n_m$, let $\delta_k$ be the smallest nonnegative
  real number with
  \[f_m(1/2+\delta_k)=0 \qquad\text{and}\qquad k \leq \frac 1\pi
    \left| \arg L(\sigma+iT,\tau)^m \Big|_{\sigma=1/2}^{1/2+\delta_k}
    \right|.\] We set $z_k\coloneqq \frac12+\delta_k$. Define $x_1$ to be the
  number of $z_k$'s that lie in the interval
  $[1/2,1/2+\delta)=[1/2,2c-\sigma_1)$, and let $x_2=n_m-x_1$ be the
  number of $z_k$'s in $[2c-\sigma_1,\sigma_1]$. We have
  \[0\leq \delta_1 <\delta_2<\cdots < \delta_{x_1} <\delta \leq
    \delta_{x_1+1} <\cdots < \delta_{n_m}\leq \sigma_1-1/2.\] Using
  Lemma~\ref{lem:using E},
  \begin{align*}
    k &\leq \frac{1}{\pi} \left| \arg L(\sigma+iT,\tau)^m \Big|_{\sigma=1/2}^{1/2+\delta_k}\right| \\
      &< \frac{1}{\pi} \left| \arg L(\sigma+iT,\tau)^m \Big|_{\sigma=1/2}^{1/2-\delta_k}\right|+ m \, {\mathcal E}(a_\tau,\delta_k,T) \\
      &\leq \frac{1}{\pi} \left| \arg L(\sigma+iT,\tau)^m \Big|_{\sigma=1/2}^{1/2-\delta_k}\right|+ m \,
        E(a_\tau,\delta_k,T).
  \end{align*}
  For each $j\geq 1$, if $k$ is minimal with
  \[\frac 1\pi \left| \arg L(\sigma+iT,\tau)^m
      \Big|_{\sigma=1/2}^{1/2-\delta_k}\right| > k-m \,
    E(a_\tau,\delta_k,T) \geq j,\] then $f_m$ has at least $j$ zeros
  in $[\frac12-\delta_k,\frac12)$. We define $\delta_{-k}$ so that
  $\frac12-\delta_{-k}$ is the largest of the ``at least $j$''
  zeros. We say that the zero $z_k=\frac12+\delta_k$ has a pair,
  namely $z_{-k}=\frac12-\delta_{-k}$. By construction,
  $\delta_{-k}\leq \delta_k$.

  If $z_k\in [\frac12,\frac12+\delta]$ is unpaired, then it
  contributes (using $\frac12+\delta\leq c$)
  \[\frac1m \log\frac{r}{|c-z_k|} = \frac1m
    \log\frac{r}{c-(\frac12+\delta_k)} \geq \frac1m
    \log\frac{r}{c-1/2}\] to $S_m(c,r)$. If $z_k\in [\frac12,\frac12+\delta]$
  is paired, then it (together with its paired zero, which is at least
  $\frac12-\delta$ and so in $(c-r,c+r)$) contributes
  \begin{align*}\frac1m \log\frac{r}{|c-z_k|} + \frac1m
    \log\frac{r}{|c-z_{-k}|}
    &= \frac 1m \log\frac{r^2}{|c-(1/2+\delta_k)| \cdot |c-(1/2-\delta_{-k})|} \\
    &\geq \frac 1m \log\frac{r^2}{|(c-1/2)^2-\delta_k^2|} \geq \frac 1m
    \log\frac{r^2}{(c-1/2)^2}
\end{align*}
to $S_m(c,r)$.  If
  $z_k\in [\frac12+\delta,\sigma_1]$, then it contributes
  \begin{align*}
    \frac1m\log\frac{r}{|c-z_k|}
    &\geq \min\left\{ \frac1m \log\frac{r}{c-(\frac12+\delta)}, \frac1m \log\frac{r}{\sigma_1-c}\right\} \\
    &=\frac1m \log\frac{r}{\max\{c-\frac12-\delta,\sigma_1-c\} }\\
    &=\frac1m\log\frac {r}{\sigma_1-c} = \frac1m \log \frac{r^2}{(c-1/2)^2}
  \end{align*}
  to $S_m(c,r)$, revealing the wisdom in setting $\delta=2c-\sigma_1-\frac12$
  and $\sigma_1=c+\frac{(c-1/2)^2}{r}$.

  Suppose there are $x$ zeros in $[\frac12,\frac12+\delta]$, and $x'$
  of them are unpaired, and there are $n_m-x$ zeros in
  $(\frac12+\delta,\sigma_1]$. We then have
  \begin{align*}
    S_m(c,r) &\geq \frac{x'}m \log \left( \frac{r}{c-1/2} \right)+ \frac {x-x'}{m} \log \left( \frac{r^2}{(c-1/2)^2} \right)\\
    	&\qquad +\frac{n_m-x}m  \log \left(\frac {r^2}{(c-1/2)^2} \right)\\
      &=\frac{x'+2(x-x')+2(n_m-x)}{m} \, \log \left(\frac r{c-1/2} \right)\\
      &=\frac{2n_m-x'}{m}  \, \log \left(\frac r{c-1/2} \right). 
  \end{align*}
  If all of the zeros were unpaired, then $2n_m-x'=n_m$, and this
  argument would reduce to McCurley's. Fortunately, by construction
  $x'\leq mE_\delta/\pi$, and so
  \[S_m(c,r) \geq \frac{2n_m - mE_\delta/\pi}{m} \, \log \left( \frac r{c-1/2} \right),\]
  whence
  \[\frac{n_m}{m} \leq \frac{S_m(c,r)}{2\log \left( r/(c-1/2) \right)} +
    \frac{E_\delta}{2\pi}.\] Lemma~\ref{lem:n/m meaning} completes this
  proof.
\end{proof}

No effort was made to use the pairs of zeros in
$(\frac12+\delta,\sigma_1]$. This is because the pairs of such zeros
may lie outside $(c-r,c+r)$ and so may not contribute to $S$. With a
stronger assumption about $r$, we can guarantee that the pair should
get counted and obtain a slightly stronger but more involved
bound. 
In practice, the paired zero is very close
to the edge of $D(c,r)$, and so the improvement is very slight except
for tiny $q$ and $T$, which we may handle by direct computation
anyway.

\begin{prop}[Backlund's trick, inelegant version]\label{prop:Backlund2}
  Let $c$ and $r$ be real numbers, and set $\sigma_1 \coloneqq  \frac 12 + \sqrt{2}(c-\frac 12)$ and
  $\delta\coloneqq 2c-\sigma_1-\frac12$. Further, let
  $$
  S_m(c,r)\coloneqq \frac 1m \sum_{z\in \Zero_{m}(D(c,r))} \log \frac r{|z-c|},
  $$
 $E_\delta\coloneqq E(a_\tau,\delta,T)$, and
  $E_{\sigma_1}\coloneqq  E(a_\tau,\sigma_1-\tfrac12,T)$. 
 If $r>(1+\sqrt2)(c-\frac12)$,
 $c>1$ and $\frac14 \leq \delta < \sigma_1 < \frac92$, then
 \begin{equation*}
    \left| \arg L(\sigma+iT,\tau) \Big|_{\sigma=\sigma_1}^{1/2}\right|
    \leq \frac{\pi \, S_m(c,r)}{2\log r/(c-1/2) } + \frac {E_\delta}2 +\frac{\pi}{m} +\frac{E_{\sigma_1}-E_\delta}{2} \left(1-\frac{\log(1+\sqrt
        2)}{\log r/(c-1/2)}\right).
  \end{equation*}
\end{prop}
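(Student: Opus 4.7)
The plan is to mirror the proof of Proposition~\ref{prop:Backlund}, but with a sharper bookkeeping of zero contributions to $S_m(c,r)$ that exploits the stronger assumption $r > (1+\sqrt{2})(c-\tfrac12)$.

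First I would rewrite the key geometric constants under the new choice $\sigma_1 = \tfrac12 + \sqrt{2}(c-\tfrac12)$. A direct calculation yields $\sigma_1 - c = (\sqrt{2}-1)(c-\tfrac12)$ and $\delta = 2c - \sigma_1 - \tfrac12 = (2-\sqrt{2})(c-\tfrac12)$. The crucial observation is then that any zero $z_k = \tfrac12 + \delta_k$ in $[\tfrac12,\sigma_1]$ has $\delta_k \le \sqrt{2}(c-\tfrac12)$, so its potential mate $z_{-k} = \tfrac12 - \delta_{-k}$ (with $\delta_{-k} \le \delta_k$) satisfies
\[
    \tfrac12 - \delta_{-k} \ge \tfrac12 - \sqrt{2}(c-\tfrac12) = c - (1+\sqrt{2})(c-\tfrac12) > c-r,
\]
so the paired zero still lies in $D(c,r)$ and therefore contributes to $S_m(c,r)$ this time as well. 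This is where the hypothesis $r > (1+\sqrt{2})(c-\tfrac12)$ gets used.

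Next I would follow the proof of Proposition~\ref{prop:Backlund} to choose $z_k$'s, their pairs $z_{-k}$, and split the zeros into $x_1$ zeros in $[\tfrac12,\tfrac12+\delta]$ of which $x_1'$ are unpaired, and $x_2 = n_m - x_1$ zeros in $(\tfrac12+\delta,\sigma_1]$ of which $x_2'$ are unpaired. Writing $L \coloneqq \log\frac{r}{c-1/2}$ and computing the four types of contributions to $S_m(c,r)$: unpaired zeros in $[\tfrac12,\tfrac12+\delta]$ contribute $\ge L/m$; paired ones there contribute $\ge 2L/m$, as in Proposition~\ref{prop:Backlund}; unpaired zeros in $(\tfrac12+\delta,\sigma_1]$ contribute at least $\tfrac1m \log\frac{r}{(\sqrt{2}-1)(c-1/2)} = \tfrac1m\bigl(L + \log(1+\sqrt{2})\bigr)$. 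The main new calculation is for paired zeros with $z_k \in (\tfrac12+\delta,\sigma_1]$: here $|c-z_k| \le (\sqrt{2}-1)(c-\tfrac12)$ and $|c-z_{-k}| \le (1+\sqrt{2})(c-\tfrac12)$, and the algebraic identity $(\sqrt{2}-1)(1+\sqrt{2}) = 1$ gives $|c-z_k|\,|c-z_{-k}| \le (c-\tfrac12)^2$, so each such pair contributes $\ge 2L/m$, exactly as before. Summing the four contributions produces
\[
    m\,S_m(c,r) \ge (2n_m - x_1' - x_2')\,L + x_2' \log(1+\sqrt{2}).
\]

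Finally I would invoke Lemma~\ref{lem:using E} exactly as in the previous proposition to obtain the two linear constraints $x_1' \le m E_\delta/\pi$ and, applied on the whole interval, $x_1'+x_2' \le m E_{\sigma_1}/\pi$. Rearranging the displayed inequality gives $\tfrac{2n_m}{m} \le \tfrac{S_m(c,r)}{L} + \tfrac{x_1'}{m} + \tfrac{x_2'}{m}\bigl(1 - \tfrac{\log(1+\sqrt{2})}{L}\bigr)$; since $r > (1+\sqrt{2})(c-\tfrac12)$ makes the coefficient of $x_2'$ positive but less than $1$, a one-line linear-programming argument shows the right-hand side is maximized at $x_1' = m E_\delta/\pi$, $x_2' = m(E_{\sigma_1}-E_\delta)/\pi$. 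Substituting, dividing by $2$, and applying Lemma~\ref{lem:n/m meaning} (which picks up the extra $\pi/m$) yields exactly the claimed bound.

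The main obstacle, and the place where the proof of Proposition~\ref{prop:Backlund} did not go through, is guaranteeing that paired zeros of $z_k \in (\tfrac12+\delta,\sigma_1]$ still lie in $D(c,r)$ and still contribute at least $2L/m$. The former needs the quantitative condition $r > (1+\sqrt{2})(c-\tfrac12)$; the latter is made possible by the precise choice $\sigma_1 = \tfrac12 + \sqrt{2}(c-\tfrac12)$, which balances $\sigma_1 - c$ and $(c-\tfrac12)+\delta_k$ so their product is bounded by $(c-\tfrac12)^2$. Everything else is careful bookkeeping parallel to the proof of Proposition~\ref{prop:Backlund}.
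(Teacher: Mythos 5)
Your proof is correct and follows precisely the approach the paper intends (the paper merely asserts that the argument is ``essentially identical to that of the preceding proposition'' and gives one display). You have correctly supplied the details the paper omits: the verification that the hypothesis $r>(1+\sqrt2)(c-\tfrac12)$ guarantees mates of zeros in $(\tfrac12+\delta,\sigma_1]$ lie in $D(c,r)$, the product bound via $(\sqrt2-1)(1+\sqrt2)=1$ forced by the choice $\sigma_1=\tfrac12+\sqrt2(c-\tfrac12)$, the pairing inequalities $x_1'\le mE_\delta/\pi$ and $x_1'+x_2'\le mE_{\sigma_1}/\pi$ coming from Lemma~\ref{lem:using E} together with the monotonicity in Lemma~\ref{lem:E bound}, and the short linear-programming step maximizing $x_1'+x_2'\bigl(1-\tfrac{\log(1+\sqrt2)}{\log(r/(c-1/2))}\bigr)$ subject to those two constraints.
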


\begin{proof}
    The proof is essentially identical to that of the preceding proposition. We arrive at the inequality
  \begin{align*}
  S_m(c,r)&\ge \frac {x_1'}m \log \left( \frac{r}{c-1/2} \right) + \frac {x_1-x_1'}m \log \left( \frac{r^2}{(c-1/2)^2} \right) + \frac {x_2'}m \log \left( \frac{r}{\sigma_1-c} \right) \\
&= \frac{2n_m}{m}\log \left( \frac{r}{c-1/2}  \right) - \frac{x_1'}{m}\log \left( \frac{r}{c-1/2} \right),
\end{align*}
from which this proposition follows, again upon invoking Lemma~\ref{lem:n/m meaning}.
\end{proof}

%-----------------------------------------------------------------
\subsection{The Jensen integral} \label{Jennie}
%-----------------------------------------------------------------

To use Proposition~\ref{prop:Jensen}, we require an explicit function $F_{c,r}(\theta)$ that will bound the quantity $\frac1m \log |f_m(c+re^{i\theta})|$ and that is even as a function of~$\theta$.

We begin by quoting some useful bounds on $L(s,\tau)$. The first bound~\eqref{eq:Lright} is straightforward from the Euler products
for $L(s,\tau)$ and $\zeta(s)$. The second bound~\eqref{eq:Lmiddle} is
Rademacher's convexity bound~\cite{Rademacher}. The
third bound~\eqref{eq:Lleft} follows from the second with $\eta=-\sigma$,
although it is actually a primary ingredient in Rademacher's proof
of equation~\eqref{eq:Lmiddle}.
\begin{lemma}
  Let $\tau$ be a primitive character with modulus $q>1$.  Fix a
  parameter $\eta\in (0,\frac12]$ and let $s=\sigma+it$.  If $\sigma \ge 1+\eta$, then
  \begin{equation}\label{eq:Lright}
    |L(s,\tau)|
    \leq \zeta(\sigma).
  \end{equation}
  If $-\eta \leq \sigma \leq 1+\eta$, then
  \begin{equation}\label{eq:Lmiddle}
    |L(s,\tau)|
    \leq \zeta(1+\eta) \left( \frac{q}{2\pi}\cdot |s+1| \right)^{(1+\eta-\sigma)/2}.
  \end{equation}
  If $-\frac12 \leq \sigma \leq -\eta$, then
  \begin{equation}\label{eq:Lleft}
    |L(s,\tau)|
    \leq \zeta(1-\sigma) \left( \frac{q}{2\pi}\cdot |s+1| \right)^{\frac12-\sigma}.
  \end{equation}
\end{lemma}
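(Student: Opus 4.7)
The three bounds correspond to three regions of the complex plane, and I would treat them in order of increasing subtlety.

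First, \eqref{eq:Lright} is a direct consequence of comparing Euler products in the region of absolute convergence. For $\sigma > 1$, both $L(s,\tau) = \prod_p(1-\tau(p)p^{-s})^{-1}$ and $\zeta(\sigma) = \prod_p(1-p^{-\sigma})^{-1}$ converge absolutely, and the elementary estimate $|1-\tau(p)p^{-s}| \ge 1-p^{-\sigma}$ (since $|\tau(p)|\le 1$) gives $|L(s,\tau)| \le \zeta(\sigma)$ upon taking the product over primes. The hypothesis $\sigma \ge 1+\eta$ serves only to keep us inside the region of absolute convergence.

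Second, for \eqref{eq:Lmiddle}, the plan is the classical Phragmén--Lindelöf convexity argument on the strip $-\eta \le \sigma \le 1+\eta$. On the right edge $\sigma = 1+\eta$, bound \eqref{eq:Lright} already gives $|L(s,\tau)| \le \zeta(1+\eta)$. On the left edge $\sigma = -\eta$, the functional equation $L(s,\tau) = \epsilon(\tau)(q/\pi)^{1/2-s}\,\Gamma\!\bigl(\tfrac{1-s+a_\tau}{2}\bigr)/\Gamma\!\bigl(\tfrac{s+a_\tau}{2}\bigr)\,L(1-s,\bar\tau)$ moves the $L$-factor to $\Re(1-s) = 1+\eta$, where it is again bounded by $\zeta(1+\eta)$; a sharp, explicit Stirling estimate of the gamma ratio, of the shape $\bigl|\Gamma(\tfrac{1-s+a}{2})/\Gamma(\tfrac{s+a}{2})\bigr| \le \bigl(|s+1|/2\bigr)^{1/2-\sigma}$, then combines with the $(q/\pi)^{1/2-s}$ prefactor to yield $|L(s,\tau)| \le \zeta(1+\eta)\bigl(q|s+1|/(2\pi)\bigr)^{(1+2\eta)/2}$ on this line. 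Hadamard's three-lines theorem, applied after the standard normalization by a suitable power of $q(s+1)/(2\pi)$ (defined via the principal branch of $\log(s+1)$, which is holomorphic on the strip since $\Re(s+1) \ge 1-\eta > 0$ throughout), then interpolates these two boundary bounds into the convex combination with exponent $(1+\eta-\sigma)/2$, producing \eqref{eq:Lmiddle}.

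Third, \eqref{eq:Lleft} is obtained by specialising \eqref{eq:Lmiddle} with the auxiliary parameter $\eta' \coloneqq -\sigma$. The hypothesis $-\tfrac12 \le \sigma \le -\eta$ forces $\eta' \in [\eta,\tfrac12] \subseteq (0,\tfrac12]$, so $\eta'$ is an admissible choice, and $\sigma$ lies exactly on the left edge $\sigma = -\eta'$ of the corresponding middle strip. Plugging into \eqref{eq:Lmiddle} with $\eta'$ in place of $\eta$ converts the exponent $(1+\eta'-\sigma)/2$ into $\tfrac12-\sigma$ and the coefficient $\zeta(1+\eta')$ into $\zeta(1-\sigma)$, matching \eqref{eq:Lleft} exactly.

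The main obstacle is concentrated in \eqref{eq:Lmiddle}: one must choose the normalizing factor for Phragmén--Lindelöf carefully enough, and execute the Stirling bound on the gamma ratio sharply enough, that the leading constant on the left edge comes out as \emph{exactly} $\zeta(1+\eta)$ (rather than some larger multiple) with \emph{exactly} the exponent $(1+2\eta)/2$ on $(q|s+1|/(2\pi))$. Any looseness here would survive the interpolation and either inflate the constant in \eqref{eq:Lmiddle} or, worse, spoil the clean specialisation used to derive \eqref{eq:Lleft}. Every other step is either Euler-product bookkeeping or routine substitution.
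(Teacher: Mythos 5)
Your proposal is correct and takes the same route the paper indicates: the Euler-product comparison for \eqref{eq:Lright}, Rademacher's Phragm\'en--Lindel\"of convexity argument for \eqref{eq:Lmiddle}, and the specialization $\eta = -\sigma$ for \eqref{eq:Lleft}. The paper itself gives only a one-line attribution (``Rademacher's convexity bound'') and the same specialization remark rather than a worked proof, so your sketch is a faithful reconstruction of the cited argument, including the observation that the left-edge estimate is effectively \eqref{eq:Lleft} itself --- which the paper also flags by noting that \eqref{eq:Lleft} ``is actually a primary ingredient in Rademacher's proof'' of \eqref{eq:Lmiddle}.
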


We can leverage Rademacher's argument to also provide bounds to the left of $\sigma=-1/2$. For a real number $x$, let $[x]$ be the integer closest to $x$, choosing the one closer to 0 if there are two integers equally close to $x$. We note that $[-x]=-[x]$.

For $-\frac12 \leq \sigma<0$, the following result reduces to equation~\eqref{eq:Lleft}.
\begin{theorem}\label{thm:Lboundleft}
  Let $\tau$ be a primitive character with modulus $q>1$.  Suppose
  $s=\sigma+it$, with $\sigma< 0$. Then
  \begin{equation}\label{Rademacher+}
    |L(s,\tau)| \leq \zeta(1-\sigma)
    \left(\frac{q}{2\pi}\right)^{\frac12-\sigma}\cdot\big| s-[\sigma]
    + 1 \big|^{\frac12-\sigma+[\sigma]} \cdot \prod_{j=1}^{-[\sigma]}
    \left|s+j-1\right|.
  \end{equation}
\end{theorem}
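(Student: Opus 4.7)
The plan is to apply the functional equation~\eqref{Functional Equation}, bound $|L(1-s,\bar\tau)|$ by its Euler product, simplify the resulting gamma ratio via reflection and duplication, and then use the recursion $\Gamma(w+1)=w\Gamma(w)$ to shift $s$ back into the range where Rademacher's original argument applies. Since $\sigma<0$ gives $\Re(1-s)>1$, Lemma~\ref{lem:EPB} provides $|L(1-s,\bar\tau)|\le\zeta(1-\sigma)$, so the functional equation reduces the theorem to an estimate on the ratio $|\Gamma((1-s+a_\tau)/2)/\Gamma((s+a_\tau)/2)|$.

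To simplify that ratio, I would invoke the reflection formula $\Gamma(w)\Gamma(1-w)=\pi/\sin(\pi w)$ on the numerator, use the Legendre duplication identity $\Gamma(w)\Gamma(w+\tfrac12)=2^{1-2w}\sqrt{\pi}\,\Gamma(2w)$ to collapse $\Gamma((1+s-a_\tau)/2)\Gamma((s+a_\tau)/2)$ into $2^{1-s}\sqrt{\pi}\,\Gamma(s)$, and then reflect $\Gamma(s)$ using $\sin\pi s=2\sin(\pi s/2)\cos(\pi s/2)$. A short calculation produces
\[
\left|\frac{\Gamma((1-s+a_\tau)/2)}{\Gamma((s+a_\tau)/2)}\right| = \frac{2^\sigma}{\sqrt{\pi}}\,|\Gamma(1-s)|\,\Phi_{a_\tau}(s),
\]
where $\Phi_0(s):=|\sin(\pi s/2)|$ and $\Phi_1(s):=|\cos(\pi s/2)|$. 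After absorbing the stray powers of $2$ and $\pi$ into the conductor factor, the intermediate bound reads
\[
|L(s,\tau)|\le \zeta(1-\sigma)\Bigl(\frac{q}{2\pi}\Bigr)^{1/2-\sigma}\sqrt{\tfrac{2}{\pi}}\,|\Gamma(1-s)|\,\Phi_{a_\tau}(s).
\]

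Next, with $n:=-[\sigma]$, iterating $\Gamma(w)=(w-1)\Gamma(w-1)$ gives $\Gamma(1-s)=(-1)^n\prod_{j=1}^{n}(s+j-1)\,\Gamma(1-s-n)$, so in absolute value this introduces exactly the product $\prod_{j=1}^{-[\sigma]}|s+j-1|$ appearing in the theorem. Setting $s':=s+n$, the shifted real part $\sigma':=\sigma-[\sigma]$ lies in $[-\tfrac12,\tfrac12]$, and the addition formulas for $\sin$ and $\cos$ give $\Phi_{a_\tau}(s)=\Phi_{a'}(s')$ with $a':=(a_\tau+n)\bmod 2$. Rademacher's gamma inequality, which is the content of~\eqref{eq:Lleft} stripped of the $\zeta$ and conductor factors, asserts that
\[
\sqrt{\tfrac{2}{\pi}}\,|\Gamma(1-s')|\,\Phi_{a'}(s')\le |s'+1|^{1/2-\sigma'}=|s-[\sigma]+1|^{1/2-\sigma+[\sigma]};
\]
substituting this into the displayed bound yields exactly the stated result.

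The main obstacle is that~\eqref{eq:Lleft} directly supplies this Rademacher-style gamma inequality only for $\sigma'\in[-\tfrac12,0]$, whereas $\sigma'=\sigma-[\sigma]$ can in fact land anywhere in $[-\tfrac12,\tfrac12]$. To cover the extra range $\sigma'\in(0,\tfrac12]$, I would apply Phragm\'en--Lindel\"of on the strip $-\tfrac12\le\Re w\le\tfrac12$ to the holomorphic function whose modulus is the ratio of the two sides, checking the desired inequality on each boundary line (where Stirling shows the bound is essentially tight, even achieving equality at the corners) and verifying the admissible-growth condition via the standard estimate $|\Gamma(1-w)|\asymp |t|^{1/2-\Re w}e^{-\pi|t|/2}$ as $|t|\to\infty$.
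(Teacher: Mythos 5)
Your proof is correct and takes essentially the same route as the paper's: functional equation, Euler product bound on $|L(1-s,\bar\tau)|$, reflection plus duplication to reduce to a shifted gamma ratio with $\sigma'=\sigma-[\sigma]\in[-\tfrac12,\tfrac12]$, and finally Rademacher's gamma-ratio inequality. Your extraction of $\prod_{j=1}^{-[\sigma]}|s+j-1|$ by iterating $\Gamma(w)=(w-1)\Gamma(w-1)$ together with the phase bookkeeping $\Phi_{a_\tau}(s)=\Phi_{a'}(s')$ is algebraically equivalent to the paper's device of comparing the sine-duplication identity at $s$ and at $s+k$ with $b\equiv a+k\pmod 2$. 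You are in fact more careful than the paper on the closing step: where the paper writes that the residual gamma ratio is bounded ``trivially,'' you correctly note that \eqref{eq:Lleft} alone only delivers the gamma inequality for $\sigma'\in[-\tfrac12,0]$ and that extending it to $\sigma'\in(0,\tfrac12]$ requires a Phragm\'{e}n--Lindel\"{o}f argument on the strip --- which is precisely what Rademacher proves and what the paper's remark ``precisely the inequality that Rademacher uses'' implicitly invokes.
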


\begin{proof}
Let $a=a_\tau$ be the sign and $q$ the modulus (and conductor) of
  $\tau$.  From the functional equation~\eqref{Functional Equation}, it follows that 
  \[|L(s,\tau)|
    =\left(\frac{q}{\pi}\right)^{-\sigma+1/2}\cdot|L(1-s,\bar\tau)|\cdot
    \left| \frac{\Gamma(\frac a2+\frac{1-s}{2})}{\Gamma(\frac a2 +
        \frac s2)}\right|.\] As $\sigma<0$, we may apply 
  Lemma~\ref{lem:EPB} to conclude that
  \[|L(1-s,\bar\tau)| \leq \zeta(1-\sigma).\] We are therefore left with a
  ratio of gamma functions to bound.
To do this, we appeal to Euler's reflection formula
\[
\Gamma(1-z) \Gamma(z)=\frac{\pi}{\sin (\pi z)}
\]
 and Legendre's duplication formula
\[
\Gamma(z) \Gamma(z+\tfrac{1}{2})=2^{1-2 z} \sqrt{\pi} \cdot \Gamma(2 z) .
\]
It follows, as $a\in\{0,1\}$, that
\begin{align*}
\frac{\Gamma({\frac a2+\frac{1-s}2})}{\Gamma({\frac a2+\frac{s}2})} &= \frac{\Gamma(1-\frac{s+1-a}2)}{\Gamma({\frac a2+\frac{s}2})} \\
&= \frac\pi{\Gamma({\frac a2+\frac{s}2}) \Gamma(\frac{s+1-a}2) \sin(\frac\pi2(s+1-a))} \\
&= \frac{\sqrt\pi}{2^{1-s} \Gamma(s) \sin(\frac\pi2(s+1-a))}.
\end{align*}
Replacing $s$ by $s+k$ and $a$ by $b\in\{0,1\}$, we obtain
\begin{align*}
\frac{\Gamma(\frac b2+\frac{1-(s+k)}2)}{\Gamma(\frac b2+\frac{s+k}2)} &= \frac{\sqrt\pi}{2^{1-(s+k)} \Gamma(s+k) \sin(\frac\pi2(s+k+1-b))}.
\end{align*}
Comparing the last two equations reveals that
\begin{align*}
\frac{\Gamma({\frac a2+\frac{1-s}2})}{\Gamma({\frac a2+\frac{s}2})} &= \frac{\Gamma(\frac b2+\frac{1-(s+k)}2)}{\Gamma(\frac b2+\frac{s+k}2)} \frac{2^{1-(s+k)}}{2^{1-s}} \frac{\Gamma(s+k)}{\Gamma(s)} \frac{\sin(\frac\pi2(s+k+1-b))}{\sin(\frac\pi2(s+1-a))} \\
&= \frac{\Gamma(\frac b2+\frac{1-(s+k)}2)}{\Gamma(\frac b2+\frac{s+k}2)} 2^{-k} \bigg( \prod_{j=1}^{k} (s+j-1) \bigg)  \frac{\sin(\frac\pi2(s+k+1-b))}{\sin(\frac\pi2(s+1-a))} .
\end{align*}
If we now choose $b\equiv a+k\mod2$, then the last factor becomes simply $\pm1$ and hence the desired inequality follows upon choosing $k=-[\sigma]$, bounding ${\Gamma(\frac b2+\frac{1-(s+k)}2)}/{\Gamma(\frac b2+\frac{s+k}2)}$ trivially 
%\todo{Don't love this usage of the word ``trivially''.} 
and taking absolute values.  
\end{proof}

 It is worth observing that equation~\eqref{Rademacher+} is precisely  the inequality that Rademacher uses (together with
  equalities from the functional equation), so any improvement to this
  bound would also yield an improvement to Rademacher's bound in the
  critical strip, and vice versa.
  
\begin{lemma}\label{lem: f bound RHP}
  Fix a parameter $\eta\in(0,\frac12]$ and $T>0$. Write $s=\sigma +it$. If
  $\sigma \ge 1+\eta$, then
  \begin{equation}
    \frac 1m \log | f_m(s)| \leq \log \zeta(\sigma). \label{eq:fright}
  \end{equation}
  If $-\eta \leq \sigma \le 1+\eta$, then
  \begin{equation}
    \frac1m \log|f_m(s)| \leq \log\zeta(1+\eta)+\frac{1+\eta-\sigma}{2}\log\frac {q}{2\pi} +\frac{1+\eta-\sigma}{4}\log\big(
    (\sigma+1)^2+(|t|+T)^2\big). \label{eq:fmiddle}
  \end{equation}
  If $\sigma\leq -\eta$, then
  \begin{multline}
    \frac1m \log|f_m(s)| \leq \log\zeta(1-\sigma)+\frac{1-2\sigma}{2} \log\frac {q}{2\pi} +\frac{1-2\sigma+2[\sigma]}{4} \log\big( (1+\sigma-[\sigma])^2+(|t|+T)^2\big) \\
    +\frac12 \sum_{k=1}^{-[\sigma]}\log\big( (\sigma+k-1)^2+(|t|+T)^2
    \big). \label{eq:fleft}
  \end{multline}
\end{lemma}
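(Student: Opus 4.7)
The starting point is the elementary estimate
$$
|f_m(s)| \le \tfrac{1}{2}\bigl(|L(s+iT,\tau)|^m+|L(s-iT,\bar\tau)|^m\bigr)\le \max\bigl\{|L(s+iT,\tau)|,|L(s-iT,\bar\tau)|\bigr\}^m,
$$
so that $\tfrac1m\log|f_m(s)|$ is dominated by the larger of $\log|L(s+iT,\tau)|$ and $\log|L(s-iT,\bar\tau)|$. Writing $s=\sigma+it$, the two relevant arguments are $\sigma+i(t+T)$ and $\sigma+i(t-T)$, both with real part $\sigma$; and for every real number $z_0$,
$$
|s\pm iT+z_0|^2=(\sigma+z_0)^2+(t\pm T)^2\le(\sigma+z_0)^2+(|t|+T)^2.
$$
The plan is simply to substitute $s\pm iT$ into the appropriate $L$-function bound from \eqref{eq:Lright}--\eqref{eq:Lleft} or from Theorem~\ref{thm:Lboundleft}, apply this observation, and take $\max$ of the two sides.

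For case~(a), with $\sigma\ge1+\eta$, the bound \eqref{eq:Lright} gives $|L(s\pm iT,\cdot)|\le\zeta(\sigma)$ with no $t$-dependence, so \eqref{eq:fright} is immediate. For case~(b), with $-\eta\le\sigma\le1+\eta$, I substitute into Rademacher's bound \eqref{eq:Lmiddle}, obtaining
$$
\log|L(s\pm iT,\cdot)|\le\log\zeta(1+\eta)+\tfrac{1+\eta-\sigma}{2}\log\tfrac{q}{2\pi}+\tfrac{1+\eta-\sigma}{4}\log\bigl((\sigma+1)^2+(t\pm T)^2\bigr).
$$
Since the exponent $(1+\eta-\sigma)/4$ is non-negative, replacing $(t\pm T)^2$ by $(|t|+T)^2$ yields \eqref{eq:fmiddle}.

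For case~(c), with $\sigma\le-\eta<0$, Theorem~\ref{thm:Lboundleft} applies and gives a \emph{product} of factors in \eqref{Rademacher+}; taking logarithms converts it to a sum, which is the crucial structural feature. Using $\max_\pm\sum_k A_{\pm,k}\le\sum_k\max_\pm A_{\pm,k}$, I can bound each log factor separately by the $(|t|+T)$-version. For the factor $|s\pm iT-[\sigma]+1|^{1/2-\sigma+[\sigma]}$ this requires the exponent to be non-negative, which holds because the defining property $|\sigma-[\sigma]|\le\tfrac12$ of the nearest integer forces $\tfrac12-\sigma+[\sigma]\in[0,1]$; for the remaining factors $|s\pm iT+j-1|$ the exponent is $1$. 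Summing the resulting expressions and combining with the sign-independent factors $\log\zeta(1-\sigma)$ and $(\tfrac12-\sigma)\log(q/2\pi)$ yields \eqref{eq:fleft}.

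I expect no serious obstacle: each case is a one-line substitution plus the observation about $(|t|+T)^2$. The only bookkeeping subtlety is in case~(c), where one must verify that the exponents multiplying the sign-dependent logs are non-negative so the substitution goes in the correct direction; the nearest-integer convention for $[\sigma]$ is exactly what makes this work. As a consistency check, one may note that for $-\tfrac12\le\sigma<0$ the product in \eqref{Rademacher+} is empty and $[\sigma]=0$, so \eqref{eq:fleft} reduces to precisely what one would derive directly from \eqref{eq:Lleft}; this confirms that the three cases fit together seamlessly across the critical line and all the way to the far left of the plane.
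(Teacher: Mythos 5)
Your proof is correct and follows essentially the same route as the paper: bound $|f_m(s)|$ by the larger of $|L(s\pm iT,\cdot)|^m$ via the triangle inequality, substitute the relevant $L$-bound from \eqref{eq:Lright}--\eqref{eq:Lleft} or Theorem~\ref{thm:Lboundleft}, and absorb $(t\pm T)^2$ into $(|t|+T)^2$, using the nonnegativity of the exponent $\tfrac12-\sigma+[\sigma]$ guaranteed by the nearest-integer convention for case~(c). The paper phrases the final step as monotonicity in $\Im(s)$ of the bound in Theorem~\ref{thm:Lboundleft} rather than your termwise $\max$ over the log-sum, but these are the same observation.
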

  
  \begin{proof}
    For $\sigma\ge 1+\eta>1$, the real parts of $s+T i$ and $s-T i$
    are both at least $1+\eta$, and so Lemma~\ref{lem:EPB} gives
    \begin{align*}
      |f_m(s)| &\coloneqq  \left| \frac1{2}\left( L(s+iT,\tau)^m + L(s-iT,\bar\tau)^m\right) \right| \\
               & \leq \frac1{2} \left( \left| L(s+iT,\tau)\right|^m + \left| L(s-iT,\bar\tau)\right|^m \right) \\
               &\leq \frac 12 \left( \zeta(\sigma)^m + \zeta(\sigma)^m \right) =\zeta(\sigma)^m.
    \end{align*}
    Taking real logarithms yields~\eqref{eq:fright}.
  
    For the second claim, with $-\eta \leq \sigma \leq 1+\eta$, we
    have
    \begin{align*}
      |f_m(s)| &\coloneqq  \left| \frac1{2}\left( L(s+iT,\tau)^m + L(s-iT,\bar\tau)^m\right) \right| \\
               &\leq \frac12 \left( \left| L(s+iT,\tau)\right|^m + \left| L(s-iT,\bar\tau)\right|^m \right) \\
               & \leq \frac12 \left( \bigg(\zeta(1+\eta) \left(  \frac{q}{2\pi}|s+T i+1| \right)^{(1+\eta-\sigma)/2}\bigg)^m \right. \\
               &\qquad \qquad+ \left. \bigg(\zeta(1+\eta) \left(  \frac{q}{2\pi} |s-T i+1| \right)^{(1+\eta-\sigma)/2}\bigg)^m \right).
    \end{align*}
    Writing $s=\sigma+ti$, with $T>0$, we have\footnote{As we are
      ultimately concerned with $m\to\infty$, we could care only about
      whichever one of $|s\pm Ti+1|$ is larger, saving a factor of
      2. But also, as $m\to\infty$ and we  ultimately care about $\frac 1m \log|f_m(s)|$, a factor of 2 is
      irrelevant.}
    \[ |s \pm T i+1|^2 = (\sigma+1)^2+(t\pm T)^2 \leq
      (\sigma+1)^2+(|t|+T)^2. \] Thus,
    \[|f_m(s)| \leq \bigg(\zeta(1+\eta) \left( \frac{q}{2\pi} \sqrt{
          (\sigma+1)^2+(|t|+T)^2}
      \right)^{(1+\eta-\sigma)/2}\bigg)^m,\] and routine 
    manipulation of logarithms completes the lemma.

    Finally, for $-\frac12 \leq \sigma<-\eta$, we have $-[\sigma]=0$, and so line~\eqref{eq:fleft} follows
    from~\eqref{eq:fmiddle} upon setting $\eta=-\sigma$ and noting that the summation over $k$ is empty. Assume now that
    $\sigma< -\frac 12$, so that $[\sigma]\leq -1$. As the exponent
    $[\sigma]-\sigma+1/2$ is nonnegative, we see that the bound from
    Theorem~\ref{thm:Lboundleft} is monotone increasing for
    $\Im(s)>0$ (with $\Re(s)$ fixed). We apply Theorem~\ref{thm:Lboundleft} to $s$ with
    imaginary parts $t+T$ and $t-T$, and $|t\pm T| \leq |t| +
    T$. Thus,
    \begin{multline*}
      |f_m(s)| = \left| \frac12 \left( L(s+iT,\tau)^m + L(s-iT,\bar\tau)^m\right) \right| \\
               \leq \zeta(1-\sigma)^m \left(\frac{q}{2\pi}\right)^{m(\frac12-\sigma)} \left((\sigma-[\sigma]+1)^2+(|t|+T)^2 \right)^{m\cdot\frac12\cdot (\frac12-\sigma)} \\
               \cdot \left(\prod_{k=1}^{-[\sigma]} \big( (\sigma+k-1)^2+(|t|+T)^2 \big) \right)^{1/2}.
    \end{multline*}
    Taking real logarithms completes the proof.
  \end{proof}

\begin{definition}\label{def:L} We set 
  \[L_{j}(\theta) \coloneqq  \log\frac{(j+c+r\cos\theta)^2+(|r\sin\theta|+T)^2}{(T+2)^2}.\]
  We note that $L_j(\theta)$ depends on $c$, $r$ and $T$. If we suppose that  $0\leq \theta \leq \pi$ and $T\geq 5/7$, then, from the inequality $\log x \leq x-1$, we find that $L_j(\theta) \leq L_j^\star(\theta) / (T+2)$, where 
  \[L_j^\star(\theta) \coloneqq  2r\sin\theta-4 + \tfrac 7{19} \left( (j+c+r \cos\theta )^2 + (r \sin\theta-2)^2\right).\]
\end{definition}

\begin{definition}\label{def:F}
  Let $\sigma=c+r\cos\theta$ and $t=r\sin\theta$, where $-\pi \leq \theta \leq \pi$. Define
\[
F_{c,r}(\theta) \coloneqq \begin{cases}
\log \zeta(\sigma), &\text{if } \sigma \ge 1+\eta, \\
\displaystyle \log\zeta(1+\eta) + \frac{1+\eta-\sigma}{2}\ell + \frac{1+\eta-\sigma}{4}L_1(\theta), &\text{if } -\eta \leq \sigma \leq 1+\eta, \\
\displaystyle \log\zeta(1-\sigma) +\frac{1-2\sigma}{2} \ell
    + \frac{1-2\sigma+2[\sigma]}{4} L_{1-[\sigma]}(\theta)
    +\frac12\sum_{k=1}^{-[\sigma]} L_{k-1}(\theta), &\text{if } \sigma < -\eta.
\end{cases}
\]
%  $F_{c,r}(\theta)$  by
%  \[F_{c,r}(\theta)\coloneqq \log \zeta(\sigma) \; \mbox{ for } \; 1+\eta \leq \sigma, \] 
%   by
%  \[F_{c,r}(\theta) \coloneqq  \log\zeta(1+\eta) + \frac{1+\eta-\sigma}{2}\ell
%    +
%    \frac{1+\eta-\sigma}{4}L_1(\theta)\]
%  for $-\eta \leq \sigma \leq 1+\eta$,  and, for $\sigma<-\eta$, by
%  \begin{equation*}
%    F_{c,r}(\theta) \coloneqq  \log\zeta(1-\sigma) +\frac{1-2\sigma}{2} \ell
%    + \frac{1-2\sigma+2[\sigma]}{4} L_{1-[\sigma]}(\theta)
%    +\frac12\sum_{k=1}^{-[\sigma]} L_{k-1}(\theta).
%  \end{equation*}
  We note that $F_{c,r}(\theta)$ depends on $q,T$ and $\eta$
  implicitly. Usefully, $F_{c,r}$ is a continuous,  even function of $\theta$.
  % and is not oscillating.
  Figure~\ref{fig:jensenintegrand}~shows the function $F_{1.2,1.9}(\theta)$ for
  $T=1$ and $q=10^6$ and $\eta = 0.141$.
  \begin{figure}
    \includegraphics[width=10cm]{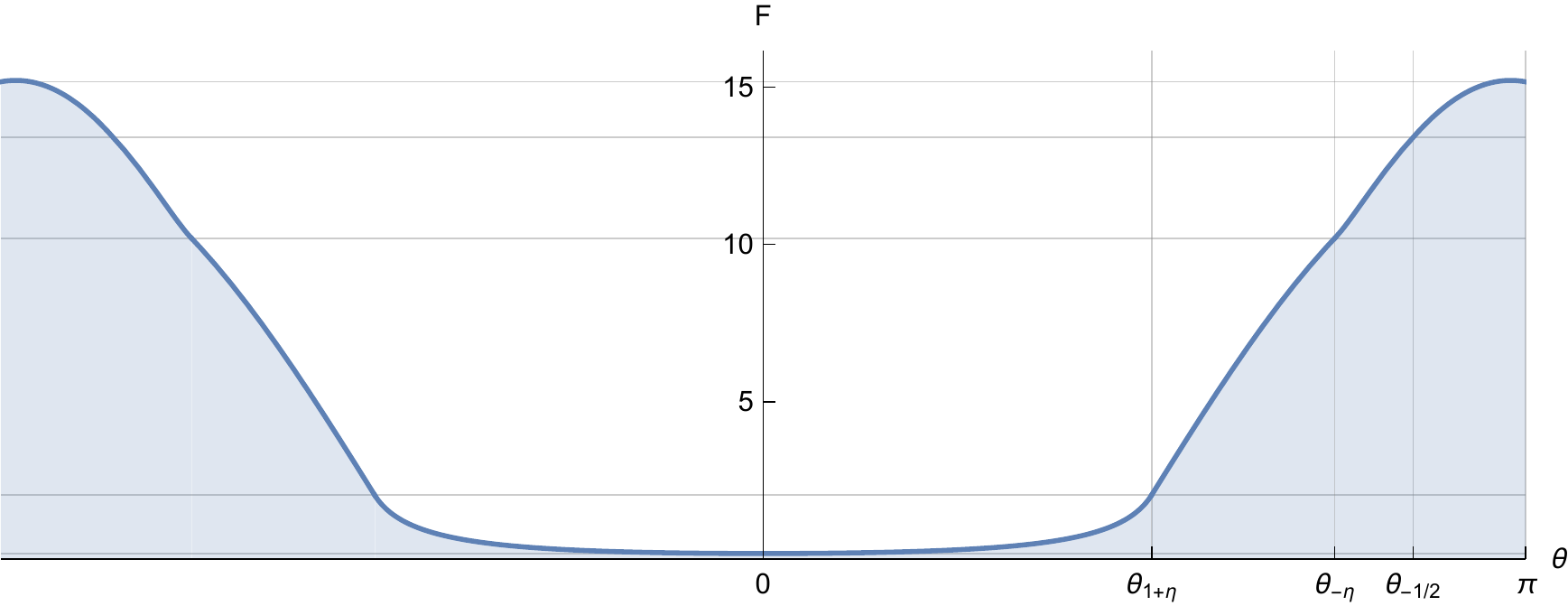}
    \caption{The function $F_{1.2,1.9}(\theta)$ for $T=1$,
      $q=10^6$, and $\eta=0.141$. With these parameters,
      \(\int_0^\pi F_{1.2,1.9}(\theta)\,d\theta \doteq
      16.37 \).}\label{fig:jensenintegrand}
  \end{figure}
\end{definition}

\begin{definition}\label{def:theta}
  If $c$ and $r$ are real numbers, with $r$ positive,  define $\theta_\sigma$ as
  \[\theta_\sigma \coloneqq  \begin{cases} 0, & \mbox{ if } c+r \leq \sigma; \\
      \arccos\frac{\sigma-c}{r}, & \mbox{ if } c-r \leq \sigma \leq c+r;\\ \pi, &
     \mbox{ if } \sigma\leq c-r.\end{cases}\] 
We remark that for $c-r\leq \sigma \leq c+r$, we have $c+r\cos\theta_\sigma = \sigma$.   
\end{definition}

\begin{definition}\label{def:kappas}
We also define
  \[ \kappa_1 \coloneqq  \left(\theta_{-\eta} - \theta_{1+\eta}\right)
    \frac{1+\eta-c}{2}-\left(\pi-\theta_{-\eta}\right)
    \left(c-\frac{1}{2}\right)+\frac{r \left(\sin
        \theta_{-\eta}+\sin\theta_{1+\eta}\right)}{2 }.\]
  For a positive integer $J_1$ (we will actually take $J_1=64$), set
    \[\kappa_2(J_1) \coloneqq \frac{\pi}{4J_1} \bigg(\log\zeta(c+r)+2\sum_{j=1}^{J_1-1}
      \log\zeta(c+r\cos\tfrac{\pi j}{2J_1}) \bigg).\]
  For a positive integer $J_2$ (we will actually take $J_2=24$), set
    \[\kappa_3(J_2) \coloneqq 
      \frac{\pi-\theta_{1-c}}{2J_2}\bigg(\log\zeta(1-c+r)+2\sum_{j=1}^{J_2-1}
      \log\zeta\big(1-c-r\cos(\tfrac{\pi j}{J_2}+(1-\tfrac j{J_2}) \theta_{1-c})\big)\bigg).\]    
Finally, define
	\begin{align*}
	\kappa_4 
	&\coloneqq  \frac 1{4} \int_{\theta_{1+\eta}}^{\theta_{-\eta}}(1+\eta-\sigma) L_1^\star(\theta)\,d\theta, \\ 
	\kappa_5
	&\coloneqq   \frac 1{4} \int_{\theta_{-\eta}}^{\theta_{-1/2}}(1-2\sigma)L_1^\star(\theta)\,d\theta, \; \mbox{ and}\\
	\kappa_{6,j}
	&\coloneqq  \frac 14 \int_{\theta_{-j+1/2}}^{\theta_{-j-1/2}} \left((1-2\sigma-2j)L_{j+1}^\star + 2\sum_{k=1}^j L_{k-1}^\star(\theta)\right)\,d\theta.
	\end{align*}
	Note that the integrands involved here are polynomials in $\sin\theta$ and $\cos\theta$, and so we can evaluate the integrals exactly. These evaluations are not enlightening to examine, but they are computationally important. For details, the reader may consult the files {\it ZerosOfLFunctions-Largeell.nb} and {\it ZerosOfLFunctions-Middleell.nb} at
\begin{center}
\texttt{\href{http://www.nt.math.ubc.ca/BeMaObRe2/}{\url{http://www.nt.math.ubc.ca/BeMaObRe2/}}}
\end{center}
\end{definition}

%\begin{lemma} \label{dec}
%  Let $j\geq-1$, $T>0$, $c>1+\eta>1$, $r>0$ and
%  $\frac \pi2 \leq \theta \leq \pi$. Then $L_j(\theta)$ is a decreasing function of $\theta$. Moreover, for $T\geq 5/7$
%  \[L_j(\theta) \leq \frac{1}{T+2}\bigg(2t-4+\tfrac{7}{19}\big((j+c+r\cos\theta)^2+(r\sin\theta-2)^2\big)\bigg).\]
%\end{lemma}
%
%\begin{proof}
%The derivative of $L_j(\theta)$ with respect to $\theta$ (for $\tfrac\pi2 \leq \theta\leq \pi$) is
%  \[\frac{2 r (T \cos\theta  -(c+j) \sin\theta)}{(j+c+r\cos\theta )^2+(r \sin\theta+T)^2}.\]
%As $T\cos\theta\leq 0$ and $-(c+j)\sin\theta\leq 0$, we see that $L_j(\theta)$ is a decreasing function of $\theta$.
%
%Writing $t=r \sin \theta$ and using that $t \geq 0$ and $\log x \leq x-1$ for $x>0$, 
%	\begin{align*}
%	L_j(\theta)
%	&\coloneqq  \log\frac{(j+c+r\cos\theta)^2+(|t|+T)^2}{(T+2)^2} \\
%	&\leq \frac{(j+c+r\cos\theta)^2+(t+T)^2}{(T+2)^2} - 1 \\
%	&=  \frac{2 t-4}{T+2}+\frac{(j+c+r\cos\theta)^2+(t-2)^2}{(T+2)^2}.
%	\end{align*}
%Now, as $\frac{1}{(T+2)^2} \leq \frac{7/19}{T+2}$ for $T\geq 5/7$, we have the claimed bound.
%\end{proof}

From the fact that $F_{c,r}$ is an even function, the Jensen integral is then bounded
as
\[\frac{1}{2\pi}\int_{-\pi}^{\pi} \frac 1m \log
  |f_m(c+re^{i\theta})|\,d\theta \leq \frac 1\pi \int_0^\pi
  F_{c,r}(\theta)\,d\theta. \] 
  We evaluate the main term of this integral (as $q$
or $T$ go to $\infty$)  with the fundamental theorem of
calculus, while the minor terms require labourious bounding.
\begin{prop}\label{prop:Integrals}
  Let $c,r$ and $\eta$ be positive real numbers satisfying
  \begin{equation} \label{zany}
 1+\eta \leq c < r-\eta,
  \end{equation}
 and suppose that $q\ge 3$ and $T\geq 5/7$. Then $\int_0^\pi F_{c,r}(\theta)\,d\theta$ is at most
  \begin{multline*}
    \kappa_1 \ell +(\theta_{-\eta} - \theta_{1+\eta})\log\zeta(1+\eta) 
                    + \int_0^{\theta_{1+\eta}}\log\zeta(c+r \cos \theta)\,d\theta \\
                    +\int_{\theta_{-\eta}}^{\pi}\log\zeta(1-c-r \cos \theta)\,d\theta 
		+\frac{\kappa_4+\kappa_5}{T+2} +\frac{1}{T+2} \sum_{j=1}^\infty \kappa_{6,j}.
  \end{multline*}
\end{prop}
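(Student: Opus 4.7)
The plan is to partition $[0,\pi]$ along the three regimes in Definition~\ref{def:F}, evaluate the regime-constant terms exactly, and bound the $L_j(\theta)$ pieces using Definition~\ref{def:L}. Under the hypothesis $1+\eta \leq c < r - \eta$ one has $c-r < -\eta < 1+\eta < c+r$, so the three pieces are $[0,\theta_{1+\eta}]$ (right regime, $\sigma \geq 1+\eta$), $[\theta_{1+\eta},\theta_{-\eta}]$ (middle), and $[\theta_{-\eta},\pi]$ (left, $\sigma \leq -\eta$). The left regime subdivides further at the points $\theta_{-j-1/2}$, where $[\sigma]=-j$ is constant on each sub-interval; since $c-r$ is finite, only finitely many of these sub-intervals are nonempty.

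On each piece I would collect three kinds of contributions. The \emph{$\log\zeta$-constant} pieces assemble directly into the three zeta expressions in the statement: $\int_0^{\theta_{1+\eta}}\log\zeta(c+r\cos\theta)\,d\theta$ from the right regime, $(\theta_{-\eta}-\theta_{1+\eta})\log\zeta(1+\eta)$ from the middle, and $\int_{\theta_{-\eta}}^\pi \log\zeta(1-c-r\cos\theta)\,d\theta$ from the left (using $1-\sigma = 1-c-r\cos\theta$). The \emph{$\ell$-linear} pieces are evaluated by elementary antidifferentiation. Using $\sigma = c+r\cos\theta$ and the endpoint identity $c + r\cos\theta_x = x$, the middle regime contributes
\[
\tfrac{1}{2}\int_{\theta_{1+\eta}}^{\theta_{-\eta}}(1+\eta-\sigma)\,d\theta = \tfrac{1+\eta-c}{2}(\theta_{-\eta}-\theta_{1+\eta}) + \tfrac{r}{2}(\sin\theta_{1+\eta}-\sin\theta_{-\eta}),
\]
while the left regime contributes $\tfrac{1}{2}\int_{\theta_{-\eta}}^{\pi}(1-2\sigma)\,d\theta = -\bigl(c-\tfrac12\bigr)(\pi-\theta_{-\eta}) + r\sin\theta_{-\eta}$. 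Adding these two gives exactly $\kappa_1$, so the $\ell$-linear contribution is $\kappa_1\ell$.

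The remaining pieces are the $L_j(\theta)$ integrals. Here I would invoke $L_j(\theta) \leq L_j^\star(\theta)/(T+2)$ from Definition~\ref{def:L}, valid uniformly for $j \geq 0$, $0 \leq \theta \leq \pi$, and $T \geq 5/7$. The middle regime's $\tfrac{1+\eta-\sigma}{4}L_1(\theta)$ term then contributes at most $\kappa_4/(T+2)$; the $j=0$ slice $[\theta_{-\eta},\theta_{-1/2}]$ of the left regime contributes at most $\kappa_5/(T+2)$; and each subsequent slice $[\theta_{-j+1/2},\theta_{-j-1/2}]$ for $j\geq 1$ contributes at most $\kappa_{6,j}/(T+2)$. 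Summing over $j$ (effectively a finite sum) and assembling with the $\log\zeta$ and $\kappa_1\ell$ terms yields the claimed upper bound. The main obstacle is not conceptual but combinatorial: one must carefully track the piecewise structure on the left regime — the varying $[\sigma]$, the shifted $L_{1-[\sigma]}$ term, and the finite sum $\sum_{k=1}^{-[\sigma]}L_{k-1}$ — to verify that, after the $L_j^\star$ bound is applied, the integrals reassemble into exactly the $\kappa_4$, $\kappa_5$, and $\kappa_{6,j}$ of Definition~\ref{def:kappas}.
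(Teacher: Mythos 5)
Your proof is correct and follows essentially the same route as the paper: decompose $[0,\pi]$ by the three regimes of $F_{c,r}$ (further subdividing the left regime at the $\theta_{-j-1/2}$), read off the constant contribution $(\theta_{-\eta}-\theta_{1+\eta})\log\zeta(1+\eta)$ and the $\ell$-linear contribution $\kappa_1\ell$ by elementary antidifferentiation, and bound each $L_j$ piece slice-by-slice via $L_j(\theta)\le L_j^\star(\theta)/(T+2)$ to obtain $\kappa_4$, $\kappa_5$, $\kappa_{6,j}$. The only step worth making explicit, as the paper does, is that the coefficients $1+\eta-\sigma$, $1-2\sigma$, and $1-2\sigma-2j$ are nonnegative on their respective slices, which is what allows you to replace $L_j$ by $L_j^\star/(T+2)$ without reversing the inequality.
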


The infinite sum is cosmetic: if $j\geq r-c+\frac12$, then $\theta_{-j-1/2}=\theta_{-j+1/2}=\pi$ and so $\kappa_{6,j}=0$.

\begin{proof}
  By the hypothesized inequalities, we have
  $\theta_{-\eta}<\theta_{-1/2}$.  Rearranging terms, 
  \begin{align}
    \int_0^\pi F_{c,r}(\theta)\,d\theta
    = &\int_{0}^{\theta_{1+\eta}} \log\zeta(\sigma)\,d\theta + \int_{\theta_{-\eta}}^{\pi} \log\zeta(1-\sigma)\,d\theta \notag \\
      &+\int_{\theta_{1+\eta}}^{\theta_{-\eta}} \log\zeta(1+\eta)\,d\theta \label{eq:constant integral}\\
      &+ \ell \left( \int_{\theta_{1+\eta}}^{\theta_{-\eta}} \frac{1+\eta-\sigma}{2}\,d\theta + \int_{\theta_{-\eta}}^{\pi}\frac{1-2\sigma}{2} \,d\theta\right) \label{eq:ell terms}\\
      &+\int_{\theta_{1+\eta}}^{\theta_{-\eta}} \frac{1+\eta-\sigma}{4}\,L_1(\theta)\,d\theta
        + \int_{\theta_{-\eta}}^{\theta_{-1/2}} \frac{1-2\sigma}{4}\, L_1(\theta)\,d\theta \label{eq:first L} \\
      &+\sum_{j=1}^\infty \int_{\theta_{-j+1/2}}^{\theta_{-j-1/2}} \bigg( \frac{1-2\sigma-2j}{4}\,L_{j+1}(\theta)+\frac12\sum_{k=1}^jL_{k-1}(\theta) \bigg) \,d\theta. \label{eq:other L}
  \end{align}
  The integrand on line~\eqref{eq:constant integral} is constant; the
  integral is $(\theta_{-\eta} - \theta_{1+\eta})\log\zeta(1+\eta)
  $. The integrals on line~\eqref{eq:ell terms} are exactly
  $\kappa_1$.
  
  For $\theta_{1+\eta} < \theta < \theta_{-\eta}$, we have $-\eta<\sigma < 1+\eta$ and so $1+\eta-\sigma>0$. For $\theta_{-\eta} < \theta < \theta_{-1/2}$, we have $-\frac12 < \sigma <\eta <0$, whence $1-2\sigma>0$. Thus, line~\eqref{eq:first L} is bounded (using $T\geq 5/7$) by
  \[\int_{\theta_{1+\eta}}^{\theta_{-\eta}} \frac{1+\eta-\sigma}{4}\,L_1^\star(\theta)\,d\theta
          + \int_{\theta_{-\eta}}^{\theta_{-1/2}} \frac{1-2\sigma}{4}\, L_1^\star(\theta)\,d\theta = \frac{\kappa_4+\kappa_5}{T+2}.\]
  Likewise, if $\theta_{-j+1/2} < \theta < \theta_{-j-1/2}$, then $-j-\frac12 < \sigma < -j+\frac12$, and so $$
  1-2\sigma-2j>0
  $$ 
  and we can appeal to the inequality $L_j(\theta) \leq L_j^\star(\theta)/(T+2)$. This bounds line~\eqref{eq:other L} by
  \[\frac{1}{T+2} \sum_{j=1}^\infty \kappa_{6,j},\] as claimed.
\end{proof}

Our goal in the remainder of this section is to provide upper bounds for the two integrals appearing in  the statement of Proposition \ref{prop:Integrals}. In both cases, these bounds will take the form of a small finite sum of reasonably manageable (that is, easily optimized) functions.

\begin{lemma} \label{lem1}
  Let  $c,r$ and $\eta$ be positive real numbers satisfying \eqref{zany}, $\sigma=c+r\cos\theta$ and $J_1$ be a positive integer.  If 
  $\theta_{1+\eta}\leq 2.1$, then
  \begin{equation*}
    \int_{0}^{\theta_{1+\eta}} \log\zeta(\sigma)\,d\theta
    \leq \frac{\log\zeta(1+\eta)+\log\zeta(c)}{2}(\theta_{1+\eta} -\frac \pi2 )+\frac{\pi}{4J_1} \log\zeta(c)+\kappa_2(J_1).
  \end{equation*}
\end{lemma}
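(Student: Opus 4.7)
The plan is to split the integration interval at $\pi/2$ and apply the trapezoidal rule on each piece, where the endpoint values of the trapezoidal sums produce exactly the terms in the claimed bound. First observe that the hypothesis \eqref{zany} gives $c \geq 1+\eta$, so $\frac{1+\eta - c}{r} \leq 0$ and consequently $\theta_{1+\eta} = \arccos\frac{1+\eta-c}{r} \geq \pi/2$. Writing
\[
\int_0^{\theta_{1+\eta}} \log\zeta(\sigma)\,d\theta
= \int_0^{\pi/2} \log\zeta(c+r\cos\theta)\,d\theta + \int_{\pi/2}^{\theta_{1+\eta}} \log\zeta(c+r\cos\theta)\,d\theta,
\]
I would apply the composite trapezoidal rule with $J_1$ equal subintervals (of width $\pi/2J_1$) to the first integral, and the single-panel trapezoidal rule to the second. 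The first rule evaluates exactly to $\kappa_2(J_1) + \frac{\pi}{4J_1}\log\zeta(c)$, and the second to $\tfrac{1}{2}(\theta_{1+\eta} - \pi/2)(\log\zeta(c)+\log\zeta(1+\eta))$, matching the claim. It therefore suffices to show that the trapezoidal rule overestimates on each piece, for which a convexity argument is standard.

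The key step is to establish that $g(\theta) := \log\zeta(c+r\cos\theta)$ is convex on the full interval $[0,\theta_{1+\eta}]$. Letting $h(\sigma) := \log\zeta(\sigma)$, from the Euler product on $\sigma > 1$ one has
\[
h'(\sigma) = -\sum_p \frac{\log p}{p^\sigma-1} < 0, \qquad h''(\sigma) = \sum_p \frac{(\log p)^2 p^\sigma}{(p^\sigma-1)^2} > 0,
\]
and the chain rule gives
\[
g''(\theta) = r^2 \sin^2\theta \cdot h''(\sigma) - r\cos\theta \cdot h'(\sigma).
\]
On $[0,\pi/2]$ this is trivially nonnegative, since $\cos\theta \geq 0$ and $h'(\sigma) \leq 0$ make both summands nonnegative. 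On $[\pi/2,\theta_{1+\eta}]$ the second summand becomes negative, so convexity must be verified quantitatively; here is where the hypothesis $\theta_{1+\eta}\leq 2.1$ enters, bounding $|\cos\theta|/\sin^2\theta$ away from infinity, while $\sigma=c+r\cos\theta$ ranges in the compact interval $[1+\eta,c]$ on which $h''(\sigma)/|h'(\sigma)|$ is bounded below by a positive constant. The inequality $g''(\theta)\geq 0$ reduces to
\[
\frac{h''(\sigma)}{|h'(\sigma)|} \geq \frac{|\cos\theta|}{r\sin^2\theta},
\]
which can be checked by the interval-analysis machinery set up earlier in the paper, using that the right-hand side vanishes at $\theta=\pi/2$ and using explicit bounds for $h',h''$ on $[1+\eta,c]$ (together with the constraint $r>c>1+\eta$).

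Granted convexity, the trapezoidal overestimate on a convex function gives
\[
\int_{\pi/2}^{\theta_{1+\eta}} g(\theta)\,d\theta \leq \frac{\theta_{1+\eta}-\pi/2}{2}\bigl(g(\pi/2)+g(\theta_{1+\eta})\bigr) = \frac{\theta_{1+\eta}-\pi/2}{2}\bigl(\log\zeta(c)+\log\zeta(1+\eta)\bigr),
\]
and the composite rule on $[0,\pi/2]$ with $J_1$ panels yields the $\kappa_2(J_1)+\frac{\pi}{4J_1}\log\zeta(c)$ contribution after one rearranges the endpoint/boundary weights. Adding the two estimates produces the asserted bound.

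The main obstacle is the convexity verification on $[\pi/2,\theta_{1+\eta}]$; everything else is formal. Interestingly, this is why the hypothesis $\theta_{1+\eta}\leq 2.1$ is imposed rather than simply $\theta_{1+\eta}\leq \pi$: as $\theta\to\pi$, the geometric term $|\cos\theta|/\sin^2\theta$ blows up and swamps the contribution from $h''/|h'|$, so the convexity-based trapezoidal estimate would fail without a uniform upper bound on $\theta_{1+\eta}$.
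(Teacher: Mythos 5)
Your high-level strategy — split the integral at $\pi/2$, apply the trapezoidal rule on each piece, and justify the overestimate via convexity of $\theta\mapsto\log\zeta(c+r\cos\theta)$ — is exactly the paper's. Your chain-rule computation and the observation that convexity is automatic on $[0,\pi/2]$ (both summands of $g''$ nonnegative there) are also correct. The gap is in how you handle $[\pi/2,\theta_{1+\eta}]$: you reduce $g''(\theta)\ge 0$ to the global inequality $\frac{h''(\sigma)}{|h'(\sigma)|}\ge\frac{|\cos\theta|}{r\sin^2\theta}$ and then defer the verification to ``interval-analysis machinery,'' without actually carrying it out. That verification is not routine here, because $c$, $r$, $\eta$ are free parameters constrained only by \eqref{zany} and $\theta_{1+\eta}\le 2.1$, so one would be performing interval arithmetic over an unbounded parameter family, not over a compact box as in the other interval-analysis steps of the paper.

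The paper avoids this entirely by arguing \emph{term-by-term in the Euler product} rather than summing first. Writing $\log\zeta(\sigma)=\sum_p-\log(1-p^{-\sigma})$ and differentiating twice, each prime contributes
\[
\frac{r\log p}{(p^{\sigma}-1)^2}\Bigl(p^{\sigma}\bigl(\cos\theta+r\log p\,\sin^2\theta\bigr)-\cos\theta\Bigr),
\]
and since $r>1$ and $\log p\ge\log 2$, one may replace $r\log p$ by $\log 2$ in the parenthesis. The elementary inequality $\cos\theta+\log 2\sin^2\theta>0$ on $[0,2.1]$, together with $p^{\sigma}>1$ (valid since $\sigma=c+r\cos\theta\ge 1+\eta>1$ on this range, which also justifies the Dirichlet series), then makes every single term of the sum strictly positive: indeed $p^{\sigma}(\cos\theta+\log 2\sin^2\theta)-\cos\theta>\log 2\sin^2\theta\ge 0$. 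No interval analysis, no bound on $h''/|h'|$, no dependence on the free parameters $c,r,\eta$ beyond $\sigma>1$. This is both simpler and sharper than what you propose, and it is also the real explanation of the hypothesis $\theta_{1+\eta}\le 2.1$: the constant $2.1$ is (a rational approximation slightly below) the solution of $\cos\theta+\log 2\sin^2\theta=0$, not merely a guard against $|\cos\theta|/\sin^2\theta$ blowing up as $\theta\to\pi$. You should replace your deferred interval-analysis step with this term-by-term argument.
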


\begin{proof}
  As the map $\theta \mapsto \log\zeta(c+r \cos \theta)$ is increasing
  for $0\leq \theta \leq \theta_{1+\eta}$, we could use right
  endpoints to overestimate the integral
  $\int_{0}^{\theta_{1+\eta}} \log\zeta(\sigma)\,d\theta$. We can get
  the needed accuracy using many fewer terms, however, by showing that
  the map is convex, whereby the trapezoid rule provides an
  overestimate.

  To see that the map is convex, observe that
  \begin{align}
    \frac{d^2}{d\theta^2}
    \log\zeta(c+r\cos\theta)
    &= \frac{d^2}{d\theta^2}\sum_p - \log(1-p^{-c-r\cos\theta}) \notag \\
    &= \sum_p \frac{r\log p}{(1-p^{c+r\cos\theta})^2} \left(p^{c+r\cos\theta}(\cos\theta+r \log (p) \sin^2\theta)-\cos\theta\right) \notag\\
    &\geq \sum_p \frac{r\log p}{(1-p^{c+r\cos\theta})^2} \left(p^{c+r\cos\theta}(\cos\theta+ \log (2) \sin^2\theta)-\cos\theta\right).\label{eq:logzeta pieces}
  \end{align}
  Here, the sums are over primes $p$. Since $0\leq \theta \leq  \theta_{1+\eta} \leq 2.1$, Definition \ref{def:theta} and (\ref{zany}) together imply that 
  $$
  1 < 1+\eta = c+r \cos \theta_{1+\eta} \leq c+r \cos \theta
  $$
  (which in particular justifies the use of the Dirichlet series for $\log \zeta$),
 while $0\leq \theta \leq 2.1$ yields the inequality
  $\cos\theta+\log (2) \sin^2\theta >0$. It thus follows that
  \[ p^{c+r\cos\theta}(\cos\theta+\log (2) \sin^2\theta ) >
    (\cos\theta+\log (2) \sin^2\theta ),\] whence
  \[p^{c+r\cos\theta}(\cos\theta+ \log (2) \sin^2\theta)-\cos\theta>
    \log (2) \sin^2\theta,\] and so \eqref{eq:logzeta pieces} is
  positive, term-by-term.

 Singling out the part between $\theta_{c}=\pi/2$ and
  $\theta_{1+\eta}\geq \theta_c$, we obtain the claimed bound.
\end{proof}

In a nearly identical fashion, we prove the next lemma.  The hypotheses on $c,r$ and $\eta$ guarantee that $c-r<1-c\leq -\eta$, whence $\pi > \theta_{1-c} \geq \theta_{-\eta}$. 
\begin{lemma} \label{lem2}
 Let $J_2$ be a positive integer.  If $r> 2c-1$ and $1+\eta\leq c$, then
$$
    \int_{\theta_{-\eta}}^{\pi} \log\zeta(1-\sigma)\,d\theta \leq
    \frac{\log\zeta(1+\eta)+\log\zeta(c)}{2}(\theta_{1-c}-\theta_{-\eta})
    +\frac{\pi-\theta_{1-c}}{2J_2}\log\zeta(c)+\kappa_3(J_2).
$$
\end{lemma}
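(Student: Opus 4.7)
The proof closely parallels that of Lemma \ref{lem1}. My plan is to (i) check that $[\theta_{-\eta},\pi]$ lies in the regime where $\log\zeta(1-c-r\cos\theta)$ is given by a convergent Dirichlet series and $\cos\theta \leq 0$; (ii) establish convexity of the integrand; (iii) apply the trapezoid rule, splitting the interval at $\theta_{1-c}$ (where $1-\sigma = c$), and identify the interior sum with $\kappa_3(J_2)$.

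For step (i), the hypotheses $r > 2c-1$ and $1+\eta \leq c$ place $\theta_{-\eta}$ strictly in $(\pi/2, \pi)$ and $\theta_{1-c}$ in $[\pi/2, \pi)$, with $\theta_{-\eta} \leq \theta_{1-c}$. They also guarantee $u \coloneqq 1-c-r\cos\theta \geq 1+\eta > 1$ throughout $[\theta_{-\eta},\pi]$, so the Euler sum for $\log\zeta(u)$ converges.

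For step (ii), differentiating twice via the chain rule and $\log\zeta(u) = -\sum_p \log(1-p^{-u})$ yields
\[
\frac{d^2}{d\theta^2}\log\zeta(1-c-r\cos\theta) = \sum_p \frac{r\log p}{(p^u-1)^2}\bigl[p^u\log p\cdot r\sin^2\theta + (1-p^u)\cos\theta\bigr].
\]
On $[\theta_{-\eta},\pi]$ we have $\cos\theta \leq 0$ and $1-p^u < 0$, so both summands in the bracket are non-negative and the second derivative is non-negative term-by-term. In contrast to Lemma \ref{lem1}, no auxiliary inequality such as $\cos\theta + \log(2)\sin^2\theta > 0$ is needed, because the sign structure here already cooperates.

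For step (iii), convexity makes the trapezoid rule an overestimate. A single trapezoid on $[\theta_{-\eta},\theta_{1-c}]$ with endpoint values $\log\zeta(1+\eta)$ and $\log\zeta(c)$ produces the first summand in the claimed bound. A composite trapezoid rule on $[\theta_{1-c},\pi]$ using the $J_2+1$ nodes
\[
\theta_{1-c} + j(\pi-\theta_{1-c})/J_2 = \pi j/J_2 + (1 - j/J_2)\theta_{1-c}, \qquad j=0,\ldots,J_2,
\]
contributes half-weights at the endpoints $\log\zeta(c)$ and $\log\zeta(1-c+r)$ and full weights at the interior nodes, delivering exactly $\tfrac{\pi-\theta_{1-c}}{2J_2}\log\zeta(c) + \kappa_3(J_2)$. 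Summing the two pieces yields the lemma. I do not anticipate a serious obstacle; the main points to verify are the exact identification of the trapezoid nodes with the arguments appearing inside $\kappa_3(J_2)$, and that the degenerate case $c = 1+\eta$ (zero-width first trapezoid) remains consistent with the bound.
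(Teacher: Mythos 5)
Your proof is correct and follows the same approach as the paper, which disposes of Lemma~\ref{lem2} with the single remark that it is proved ``in a nearly identical fashion'' to Lemma~\ref{lem1} (trapezoid rule applied to a convex integrand, with the interval split at the point where $1-\sigma=c$). Your fleshed-out verification is sound in every detail, including the observation --- which the paper leaves implicit --- that the convexity argument is actually simpler than in Lemma~\ref{lem1}, since on $[\theta_{-\eta},\pi]$ both $\cos\theta\le 0$ and $1-p^u<0$ hold, so no auxiliary inequality like $\cos\theta+\log(2)\sin^2\theta>0$ is needed.
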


% -----------------------------------------------------------------------------
\section{Assembling the bound} \label{sec-assembling}
%-----------------------------------------------------------------------------

We begin this section by describing  how to assemble the results in the proceeding sections to produce  an explicit bound for $\ell\geq 27.02$. In Section~\ref{subsec-middle ell}, we adjust this argument to treat values for $\ell$ with $5.98 \leq \ell  \leq 28$. Finally,  in Section~\ref{subsec-small ell} we outline the rigorous  explicit computations of zeros that  allows us  to handle small $\ell$ with $\ell\leq 6$.

\subsection{Large Values of $\ell$}\label{subsec-large ell}

Let us assume that $\ell\geq 27.02$ and set
	\begin{align*}
	\ell&\coloneqq \log \frac{q(T+2)}{2\pi} \subseteq[27.02,\infty),&\quad \eta&\coloneqq \frac{18}{10+9\ell} \subseteq(0,0.08),\\
	c &\coloneqq  1+\frac{391}{74 \ell+683} \subseteq (1,1.15),&\quad r & \coloneqq  \frac{149}{140}+\frac{769}{30 \ell+512} \subseteq (1.06,1.65),\\
	\sigma_1 &\coloneqq c+ \frac{(c-1/2)^2}{r} \subseteq (1.23,1.40), & \quad \delta &\coloneqq  2c-\sigma_1-\frac12 \subseteq (0.26,0.40).
	\end{align*}
These definitions guarantee the chain of inequalities
   \[-\frac12 < c-r<1-c<-\eta < 0 < 1 < 1+\eta < c < \sigma_1 < c+r,\]
which is simply (\ref{zany}) along with the extra condition that $c-r<1-c<-\eta$. 
We set $E_\delta\coloneqq E(a_\chi,\delta,T)$. The values for $c$ and $r$ were chosen after extensive numerical work, with $\frac{149}{140}=1.06429\cdots$ being a good approximation to our numerically determined ``ideal'' value  of $r$. Numerical work suggests that we should choose $c=1+O(\frac{\log\ell}{\ell})$, but the  improvement in the final values is slight, while the added complexity in  producing a bound would  be considerable.

The value of $\eta$ can be motivated, however, and some words on why we define $\eta$ in this way are appropriate. To apply
Lemma~\ref{lem: f bound RHP}, we require $\eta\leq 1/2$. In an
ideal world, we could choose $\eta$ optimally for each $\sigma$, so
as to make the right side of~\eqref{eq:fmiddle} as small as
possible. Experiments indicate that the numerical advantage in doing
so is slight, albeit noticeable, and not justifying the added
complexity. The derivative with respect to $\eta$ of~\eqref{eq:fmiddle} at
$\sigma=1/2$ is
	\[\frac{1}{2} \log\frac{q}{2 \pi }+\frac{1}{4} \log \left((\sigma
	+1)^2+(t+T)^2\right)+\frac{\zeta '(1+\eta)}{\zeta (1+\eta)}.\]
For $\eta$ between~$0$ and $1/2$, we know that
$\frac{\zeta'}{\zeta}(1+\eta)+\frac1\eta$ is nearly linear,
decreasing from $\gamma \doteq 0.577216$ to just below $1/2$; we choose $5/9$ as a convenient rational in the desired range. We handle small $\ell$ (for which $\eta$ is near
$1/2$) by direct computation of zeros, so we find it reasonable to
replace $\frac{\zeta'}{\zeta}(1+\eta)$ with
$\frac59-\frac1\eta$. The value of $t$ will cover a range, but $t=2$
seems roughly typical. The critical value of $\eta$ is then
estimated as the solution to
	\[\frac{1}{2} \log\frac{q}{2 \pi }+\frac{1}{4} \log
	\left((2+T)^2\right)+\frac59 - \frac1\eta=0,\]
which is $\eta = \frac{18}{10+9\ell}$. Setting $\eta$ in this way allows us to give the single bound in Theorem~\ref{thm:Main} instead of a table of bounds for various settings of $\eta$ (as in
\cite{Trudgian}) or a bound that depends continuously on $\eta$ (as in \cite{McCurley}).

With these choices of parameters, we now return to inequality (\ref{eq:One arg term left}). Appealing to Propositions \ref{prop:Jensen} and \ref{prop:Backlund}, and letting
$m\to\infty$, we find that
	\begin{multline} \label{with integral term}
	\bigg| N(T,\chi)  - \left( \frac{T}{\pi} \log\frac{qT}{2\pi e} -\frac{\chi(-1)}{4}\right) \bigg| 
	\leq
	|g(a,T)| + \frac 2\pi \log\zeta(\sigma_1) 
	+ \frac{E_\delta}\pi \\
	+ \frac{\log\zeta(c) - \log\zeta(2c)}{\log r/(c-1/2)}
	+ \frac{1/\pi}{\log r/(c-1/2) } \int_0^\pi F_{c,r}(\theta)\,d\theta.
	\end{multline}
Lemma~\ref{gamma prop} bounds $g(a,T)$ and Lemma~\ref{lem:E bound} bounds $E_\delta$. Combining those bounds, whose sum is monotone in $d$ and rational in $T$, we can prove that
  \[|g(a,T)| + \frac{E_\delta}\pi  \leq \frac{1}{14 (T-{1}/{5})}+\frac{1}{2^{10}}.\]
Using interval analysis,
  \[\frac 2\pi \log\zeta(\sigma_1) -\frac{\log\zeta(2c)}{\log r/(c-1/2)} 
  \leq \frac{178 \ell^2+17909 \ell+80807}{4 \left(128 \ell^2+9637 \ell+164296\right)}.
  \]

We now consider the integral term in equation~\eqref{with integral term}. We apply Proposition~\ref{prop:Integrals} to break the integral $\int F_{c,r}$ into pieces. With our settings for $c,r,\eta$ and bound on $\ell$, the hypotheses are satisfied and  $\theta_{j-1/2}=\pi$ for all $j\ge 0$, whence the ``infinite'' sum is 0. We use Lemmata~\ref{lem1} and \ref{lem2} to bound the pieces. The main term is bounded as
	\[
	\frac{\kappa_1 \ell /\pi}{\log r/(c-1/2)} 
	\leq
	\frac{238413}{2^{20}}\ell + \frac{798 \ell^2+135589 \ell+80396}{16 \left(32 \ell^2+3105 \ell+38735\right)}.
	\]
Collecting the various $\log\zeta(c)$ terms, we have a total of
	\[\left(\frac{\theta _{1-c}-\theta _{-\eta }+\theta _{1+\eta}}{\pi }-\frac{\theta _{1-c}}{\pi  J_2}+\frac{1}{2 J_1}+\frac{1}{J_2}+\frac{3}{2}  \right)\frac{\log \zeta (c)}{2 \log \left(\frac{r}{c-1/2} \right)},\]
where $\theta_\sigma$ is defined in Definition~\ref{def:theta}.
With $J_1=64, J_2=24$, these terms contribute at most
	\[\frac{-1135 \ell^2-214796 \ell+149201}{512 \ell^2+75117 \ell+496726}+\frac{\ell}{2^{20}}+\frac{1365 \log (\ell+1)}{2^{10}}\]
Collecting the various $\log\zeta(1+\eta)$ terms, we have a total of
	\begin{equation*}
	\left(\frac{\theta _{1-c}+\theta _{-\eta }-\theta _{1+\eta}}{\pi } - \frac 12\right)
	\frac{\log \zeta (1+\eta)}{2 \log \frac{r}{c-1/2}} 
	\leq
	\frac{-182 \ell^2-118430 \ell+79045}{512 \ell^2+91562 \ell+599789}+\frac{\ell}{2^{22}}+\frac{529 \log (\ell+1)}{2^{10}}.
	\end{equation*}
For the absolutely bounded terms, we obtain the inequalities
	\begin{align*}
	\frac{\kappa_2/\pi}{\log r/(c-1/2)} 
	&\leq \frac{635}{1024}-\frac{9 (113745 \ell+25384532)}{64 \left(512 \ell^2+150141 \ell+7149852\right)},\\
	\frac{\kappa_3/\pi}{\log r/(c-1/2)} 
	&\leq \frac{491}{1024}-\frac{3346893 \ell+33179656}{512 \left(512 \ell^2+21113 \ell+208616\right)}.
	\end{align*}
Since the $O(1/T)$ terms contribute
    \begin{align*}
    \frac{1/\pi}{\log \left( \frac{r}{c-1/2} \right)} \frac{\kappa_4}{T+2} 
    &\leq \frac{1}{T+2}\left(\frac{-50 \ell^2-1411 \ell+18281}{512 \ell^2+63962 \ell+800695}\right)\\
    \frac{1/\pi}{\log \left(\frac{r}{c-1/2} \right)}  \frac{\kappa_5}{T+2}
    &\leq  \frac{1}{T+2}\left( \frac{-42 \ell^2-15293 \ell-961048}{512 \ell^2+113665 \ell+3255348} \right),
    \end{align*}
we are led to conclude that
    \[|g(a,T)|+ \frac{E_\delta}{\pi}+ \frac{1/\pi}{\log \frac{r}{c-1/2} } \frac{\kappa_4}{T+2} +\frac{1/\pi}{\log \frac{r}{c-1/2} }  \frac{\kappa_5}{T+2} \leq \frac{75}{2^{10}}.\]
The remaining terms involve only $\ell$, and we find (rigorously, as with all the inequalities in this article) that they are at most
   \[0.22737 \ell  +  2  \log(1+\ell) -  0.5.\]
This establishes Theorem~\ref{thm:Main} for $\ell  \geq 27.02$.

\subsection{Middle Values of $\ell$}\label{subsec-middle ell}
For $5.98 \leq \ell \leq 28$,  we set
\[c\coloneqq  1+\frac{505}{111\ell+430} \; \mbox{ and } \;  r\coloneqq  \frac{149}{140}+\frac{747}{36 \ell+283},\]
and find that 
$$
-\frac32 \leq c-r \leq -\frac12, \; c\geq 1+\eta, \; \theta_{1+\eta}\leq 1.62 \; \mbox{ and } \; r\geq 2c-1. 
$$
A similar fully rigorous analysis yields
\[\bigg| N(T,\chi)  - \left( \frac{T}{\pi} \log\frac{qT}{2\pi e} -\frac{\chi(-1)}{4} \right) \bigg| \leq 0.22737\ell +2\log(1+\ell)-0.5.\]

\subsection{Small Values of $T$ and $\ell$}\label{subsec-small ell}
We first attempted to use Rubinstein's {\tt LCALC}, and then gp/PARI, to compute all zeros of all $L$-functions up to conductor $10000$ and  $\ell \coloneqq  \log\frac{q(T+2)}{2\pi} \leq 8$. However, both programs were found to miss pairs of zeros occasionally. Using {\tt  Arb} for interval arithmetic, for each primitive character (we actually only concern ourselves with one from each conjugate pair) with conductor $1<q<935$ and $\ell  \leq 6$, we rigorously bounded the expression in equation~\eqref{eq:von Mangold}. To do so we used the identity
\begin{align*}
 \arg L \left(\frac12 + iT,\chi \right) &= \arg L(3+iT,\chi) + \Im \int_{3+iT}^{1/2+iT} \frac{L'(s,\chi)}{L(s,\chi)}  \,ds 
\end{align*}
to rigorously bound $\arg L(\frac12 + iT,\chi)$; the term $\arg L(3+iT,\chi)$ is the principal value of the argument (it is easy to show from the Euler product that $|\arg L(3+iT,\chi)| \le \sum_p \arcsin p^{-3} < 0.176$).
We thereby computed $N(T,\chi)$ for some~$T$ greater than or equal to ${2\pi e^6}/{q}-2$ (for some characters, the integrand is highly oscillatory and it can be advantageous to let  $T$ be slightly larger).
Then,  again using  {\tt Arb} for the rigorous computation, we found the $L$-function zeros by locating sign changes in the appropriate Hardy $Z$-function. From this approach, we rigorously located (and stored)  every zero of every nontrivial primitive Dirichlet $L$-function with conductor at most $934$ and $\ell \leq 6$, accurate to within $10^{-12}$. By only considering one from each pair of complex characters and only the positive imaginary parts for real characters, we examined 80818 characters and found a total of 403272 zeros.

%In the process of cross verifying our computations, we encountered an error in the LMFDB entry \cite{LMFDB} for $L(\chi_{971}(423,\cdot),s)$. The error, which was reported, acknowledged, and quickly fixed, concerned an exceptionally low-lying zero of this character. The zero, approximately $\frac12 - 6.5\times 10^{-7}i$, appeared on the web version of the LMFDB without the ``$10^{-7}$.''

With this dataset, we have proved the following lemma. The code to generate the dataset (in C), to process the dataset into Mathematica format, and Mathematica code to pull the following lemma out of the data, are all available on the website. Additional commentary on each item is provided below.
\begin{lemma}\label{comp results}
	Let $1<q<935$, suppose that $\chi$ is a primitive character with conductor $q$ , and set $\ell\coloneqq  \log\frac{q(T+2)}{2\pi} $. Then
	\begin{enumerate}
		\item \label{comp:GRH}
			 All of the zeros of $L(s,\chi)$ with real part between $0$ and $1$ and imaginary part between $-{2 (e^6 \pi -q)}/{q}$ and ${2 (e^6 \pi -q)}/{q}$ have real part equal to~$1/2$.
		\item \label{comp:tiny}
			If $T\geq 0$ and $\ell \le 1.567$, then $N(T,\chi)=0$.
		\item \label{comp:Strong bound}
			If $T\geq 0 $ and $1.567 \le \ell \le 6$, then
		   	\begin{equation*}
				\left| N(T,\chi) - \left( \frac{T}{\pi} \log\frac{qT}{2\pi e} - \frac{\chi(-1)}{4} \right) \right| 
					\le \frac{\ell}{\log(2+\ell)}.
			\end{equation*}
		\item \label{comp:table values}
			Let $a\coloneqq (1-\chi(-1))/2$ and $T\in\{\frac 57, 1, 2\}$. Let $0\leq k \leq 4$, or $(T,k)=(2,5)$, or $(a,T,k)=(1,2,6)$. Let $q_a(T,k)$ be the integer corresponding to $a,T$ and $k$ in Table~\ref{table:q_a(T,k)}. Then:
				\[\text{If $q \leq q_a(T,k)$ and $a_\chi=a$, then $N(T,\chi)\leq k$.}\]
			Moreover, if $q_a(T,k)$ is one of the boldface entries of Table~\ref{table:q_a(T,k)}, then there is a character $\tau$ with conductor $q_a(T,k)+1$, $a_\chi = a_\tau$, and $N(T,\tau)>k$. 
	\end{enumerate}
\end{lemma}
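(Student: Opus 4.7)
The plan is to prove this lemma by direct verification against the dataset of rigorously computed zeros described above, in which every zero of every primitive Dirichlet $L$-function with conductor $1<q<935$ and $\ell\le 6$ has been located to accuracy $10^{-12}$ via the combined approach of bounding $\arg L(\tfrac12+iT,\chi)$ through interval arithmetic (using \texttt{Arb}) and locating sign changes of the Hardy $Z$-function. The four parts of the lemma are then essentially four queries against this dataset, each handled in turn.

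For part (a), the Hardy $Z$-function is real on the critical line and its sign changes correspond precisely to zeros on that line. Since the total zero count produced by these sign changes matches, interval by interval, the count $N(T,\chi)$ obtained independently from the argument-principle formula $\arg L(\tfrac12+iT,\chi)=\arg L(3+iT,\chi)+\Im\int_{3+iT}^{1/2+iT}\tfrac{L'}{L}(s,\chi)\,ds$ (with $|\arg L(3+iT,\chi)|<0.176$ from the Euler product), no zeros are missed, and every zero with $|\Im(s)|\le 2(e^6\pi-q)/q$ is verified to lie on the critical line. Part (b) follows by inspection: one checks that for every character $\chi$ in the dataset, the least positive ordinate of a zero exceeds the corresponding $T$ whenever $\log\frac{q(T+2)}{2\pi}\le 1.567$.

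For part (c) one must check the stated inequality for each pair $(T,\chi)$ with $1.567\le\ell\le 6$. Since $N(T,\chi)$ is a step function whose jumps occur at the imaginary ordinates of zeros, while $T\mapsto \frac{T}{\pi}\log\frac{qT}{2\pi e}-\chi(-1)/4$ is smooth and monotone for $T\ge 5/7$, the inequality in question can fail, if at all, only just before or just after a zero of $L(s,\chi)$. Hence for each character and each relevant zero ordinate $\gamma$, one evaluates both sides of the inequality with interval arithmetic at $T=\gamma^{\pm}$ and verifies that the inequality holds; the analogous check for the right-hand endpoint $\ell=6$ is performed at $T=2\pi e^{6}/q-2$. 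Part (d) is handled by sorting characters by conductor within each sign class $a_\chi\in\{0,1\}$ and, for each prescribed $(a,T,k)$, identifying the largest conductor $q$ at which $N(T,\chi)\le k$ still holds; the boldface entries additionally require exhibiting, in the dataset, a character of conductor $q_a(T,k)+1$ with $N(T,\chi)>k$.

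The main obstacle is rigor rather than difficulty: one must be certain that the dataset is complete and that the computed intervals are tight enough to distinguish between integer values of $N(T,\chi)$ at every candidate $T$. This is addressed by the two independent counts already mentioned (sign changes of $Z$ versus argument-principle integration) agreeing for every character, and by adaptive refinement of the integration contour whenever the interval bounds on $\arg L(\tfrac12+iT,\chi)$ fail to isolate a single integer. Once this is in place, the four parts of the lemma are merely readouts from the verified dataset, with all supporting notebooks and raw zero files posted at the website referenced above.
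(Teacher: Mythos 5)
Your overall approach --- verification of all four parts against the rigorously computed dataset of zeros --- matches the paper's, and parts (a), (b), and (d) are fine as described (the paper also notes the useful reduction in part (b) that the hypotheses force $q\le 15$, leaving only 40 primitive characters to examine).

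The one issue is your justification for part (c). You assert that the main term $T\mapsto\frac{T}{\pi}\log\frac{qT}{2\pi e}-\chi(-1)/4$ is monotone for $T\ge 5/7$, so that the inequality need only be checked just before and just after each zero ordinate. But its derivative is $\frac1\pi\log\frac{qT}{2\pi}$, which is negative for $T<2\pi/q$, so the main term decreases on $(0,2\pi/q)$ and has an interior minimum there; moreover, part (c) concerns all $T\ge 0$, not only $T\ge 5/7$, and for $q\le 8$ one even has $2\pi/q>5/7$. On a subinterval such as $[0,\gamma_1)$ where $N(T,\chi)=0$, the quantity inside the absolute value therefore attains an interior maximum at $T=2\pi/q$, and it is there, not at an endpoint, that the upper half of the inequality is tightest. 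To repair the argument you should also evaluate at $T=2\pi/q$ on whichever subinterval contains it; alternatively, do as the paper does and run a Moore--Skelboe-style branch-and-bound interval-arithmetic verification over the full $T$-range of each subinterval on which $N(T,\chi)$ is constant, which makes no monotonicity assumption at all.
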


\begin{proof}
Lemma~\ref{comp results}(a) is a partial verification of the generalized Riemann hypothesis. Although Platt~\cite{Platt} has made similar computations to much greater height with many more conductors, we independently confirm GRH to this level and make our rigorous zeros openly available at
\begin{center}
\texttt{\href{http://www.nt.math.ubc.ca/BeMaObRe2/}{\url{http://www.nt.math.ubc.ca/BeMaObRe2/}}}
\end{center}

The conditions in Lemma~\ref{comp results}(b) imply that $q\le 15$. The 40 primitive characters with $q\le 15$ are covered in our dataset, and for each, the lowest-height zero is excluded by $\log\frac{q(T+2)}{2\pi} \le 1.567$. 

Lemma~\ref{comp results}(c) requires many cases. For each of the 80818 relevant characters, the zeros are known to within $10^{-12}$. Between each pair of consecutive zeros $u,v$ (or before the first zero), we know the value of $N(T,\chi)$. This gives a range of $T$ over which the inequalities can be proved by our Moore--Skelboe-style interval arithmetic algorithm.
  
Lemma~\ref{comp results}(d), concerning the boldface and asterisked entries in Table~\ref{table:q_a(T,k)}, is also straightforward to pull from our dataset.
\end{proof}

The other entries in Table~\ref{table:q_a(T,k)} can be verified as follows.  For a given $a,T,k$, we find the values of $c$ and $r$ from Table~\ref{table:c and r2}. We then use equation~\eqref{eq:von Mangold}, evaluating the first two terms to many digits. The last term of equation~\eqref{eq:von Mangold} is bounded using Proposition~\ref{C2 prop} and Proposition~\ref{prop:Backlund2}. To use Proposition~\ref{prop:Backlund2}, we need to confirm that the restrictive inequalities hypothesized there are satisfied. Both~$E_\delta$ and~$E_{\sigma_1}$ can be explicitly computed, leaving only~$S$. In Proposition~\ref{prop:Jensen}, the quantity~$S$ is bounded in terms of the Jensen integral. The integrand in the Jensen integral is bounded in Lemma~\ref{lem: f bound RHP}, and the bound is restated in Definition~\ref{def:F}. The resulting integral is then rigorously bounded above using interval arithmetic, subdividing the region of integration until trivial bounds give the needed precision. Finally, as $N(T,\chi)$ must be an integer, we take a floor.

\begin{example}
For example, to verify the statement
    \[\text{if the conductor of $\chi$ is at most $25252$ and $a_\chi=0$, then $N(1,\chi)\leq 7$},\]
we take $T=1,q=25252,a=0$ and $c=\frac{2694}{2048}\approx 1.315$, $r=\frac{4651}{2048}\approx 2.271 $, with values of $c$ and $r$ being pulled from Table~\ref{table:c and r2}. Looking ahead to Proposition~\ref{prop:Backlund2}, we set 
$$
\sigma_1=\frac{1}{2}+\frac{835}{512 \sqrt{2}} \approx 1.653
$$
 and 
 $$
 \delta= \frac{835}{512}-\frac{835}{512 \sqrt{2}}\approx 0.478. 
 $$
 We find that
  \[
  \frac{T}{\pi} \log\frac{q}{\pi} + \frac{2}{\pi} \Im \ln\Gamma(\tfrac 14 + \tfrac {a}2+ i\, \tfrac T2) \leq 2.1013434, \; \; \frac 1\pi \cdot 2\log\zeta(\sigma_1) \leq 0.4883702
  \]
  and
  \[
  E_\delta \leq 0.1616976, \; E_{\sigma_1} -E_\delta \leq  0.5119502, \;  \log \tfrac{r}{c-1/2} =\log \left( \frac{4651}{1670} \right).
   \]
At this point, as $r>(1+\sqrt 2)(c-\frac 12)$, we can appeal to  Proposition~\ref{prop:Backlund2} to find that
  \begin{equation*}
  N(T,\chi) \leq 2.1013434+ 0.4883702 + \frac 2\pi \bigg(  \frac{\pi \, S}{2\log \frac{4651}{1670} } + \frac {0.1616976}2 +\frac{0.5119502}{2} \bigg(1-\frac{\log(1+\sqrt	2)}{\log \frac{4651}{1670}}\bigg) \bigg),
  \end{equation*}  
  whereby we may conclude that
  $$
   N(T,\chi) \leq  2.6639165+0.9763160 \cdot S.
    $$
From Proposition~\ref{prop:Jensen}, 
  \[ S \leq \log\frac{\zeta(c)}{\zeta(2c)} + \frac 1\pi \int_0^\pi F_{c,r}(\theta)\,d\theta,\]
with $F_{c,r}(\theta)$ made explicit in Definition~\ref{def:F}.
Easily computing $\log\frac{\zeta(c)}{\zeta(2c)} \leq 1.0682664$, and using interval arithmetic branch-and-bound, we find that 
  \[\int_0^\pi F_{c,r}(\theta)\,d\theta \leq 13.8132592.\]
Thus,
  \[N(T,\chi) \leq 2.663915+ 0.9763160 \left(1.0682664 +\frac{13.8132592}{\pi }\right) \leq 7.9997.\]
As $N(T,\chi)$ must be an integer, necessarily $N(T,\chi) \leq 7$.
\end{example}

For the entries in Table~\ref{table:q_a(T,k)} with asterisks, the method just described yields bounds that are inferior to the results of our brute-force computations recorded in Lemma~\ref{comp results}, and so the entries that appear are taken from those computations instead of the theoretical bound.

\begin{table}
$$
\begin{array}{c|cc|cc|cc}
  & \multicolumn{2}{|c|}{T= 5/7} & \multicolumn{2}{|c|}{T= 1} & \multicolumn{2}{|c}{T= 2} \\ \hline
k & a=0 & a=1 & a=0 & a=1 & a=0 & a=1 \\ \hline
0 & \bf{42} & \bf{16} & \bf{36} & \bf{12} & \bf{16} & \bf{10} \\
1 &  \bf{172} & \bf{66} & \bf{148} & \bf{42} & \bf{28} & \bf{18} \\
2 & 934^{\star} & 934^{\star} & 844^{\star} &  \bf{408} & \bf{120} & \bf{64} \\
3 & 934^{\star} & 934^{\star} & 844^{\star} & 844^{\star} &  \bf{330} & \bf{210} \\
4 & 934^{\star} & 934^{\star} & 844^{\star} & 844^{\star} & 634^{\star} &  \bf{630} \\
5 & 3289 & 1909 & 1616 & 905 & 634^{\star} & 634^{\star} \\
6 & 15991 & 9007 & 6256 & 3425 & 660 &  634^{\star} \\
7 & 82233 & 45137 & 25252 & 13554 & 1669 & 1050 \\
8 & 443412  & 238003  & 105597 & 55727 &  4289 & 2677  \\
9 & 2489523 & 1310445 &  455195 & 236710 & 11185 & 6932 \\ \hline
\end{array}
$$
\caption{If $\chi$ has sign $a$ and conductor $q\leq q_a(T,k)$, then $N(T,\chi) \leq k$. For example, if $\chi$ has sign $a=1$ and conductor less than  9007, then $N(5/7,\chi) \leq 6$.   The numbers in boldface are  best possible.}
\label{table:q_a(T,k)}
\end{table}

\begin{table}
$$
\begin{array}{c|cc|cc|cc}
  & \multicolumn{2}{|c|}{T= 5/7} & \multicolumn{2}{|c|}{T= 1} & \multicolumn{2}{|c}{T= 2} \\ \hline
k & a=0 & a=1 & a=0 & a=1 & a=0 & a=1 \\ \hline
5 & (2822,5006) & (2896,5176) & (2886,5212) & (2961,5388) &  &  \\
6 & (2719,4694) & (2770,4836) & (2778,4902) & (2831,5046) & (2956,5481) &  \\
7 & (2640,4447) & (2677,4566)  & (2694,4651) & (2734,4771) & (2861,5221) & (2906,5346) \\
8 & (2577,4246)  & (2606,4348)  & (2628,4444)  & (2660,4546) &  (2785,5001) & (2822,5107)  \\
9 & (2527,4081) & (2550,4168) &  (2575,4272) & (2600,4358) & (2723,4812) & (2753,4904) \\ \hline
\end{array}
$$
\caption{Values of pairs $(c^{\star},r^{\star})$ where  $c=c^{\star}/2^{11}$ and $r=r^{\star}/2^{11}$ that can be used to justify the entries in Table~\ref{table:q_a(T,k)}.}
	\label{table:c and r2}

\end{table}

% -----------------------------------------------------------------------------
\section*{Acknowledgements}

The first, second, and fourth authors were supported by NSERC Discovery Grants. Support for this project was provided to the third author by a PSC-CUNY Award, jointly funded by The Professional Staff Congress and The City University of New York.

\begin{bibdiv}
  \begin{biblist}
  	      \bib{Alirezaei}{article}{
  		author={Alirezaei, G.},
  		title={A Sharp Double Inequality for the Inverse Tangent Function},
  		eprint={arXiv:1307.4983},
  	}
  	
 \bib{EBPAP}{article}{
   author={Bennett, M. A.},
   author={Martin, G.},
   author={O'Bryant, K.},
   author={Rechnitzer, A.},
   title={Explicit bounds for primes in arithmetic progressions},
   journal={Illinois J. Math.},
   volume={62},
   date={2018},
   number={1-4},
   pages={427--532},
   issn={0019-2082},
   review={\MR{3922423}},
   doi={10.1215/ijm/1552442669},
 }
 \bib{Brent}{article}{
   author={Brent, R. P.},
   title={On asymptotic approximations to the log-Gamma and Riemann-Siegel theta functions},
   journal={J. Aust. Math, Soc.}
   date={December 21, 2018},
   eprint={arXiv:1609.03682v2},
   doi={10.1017/S1446788718000393}
   }
\bib{Hare}{article}{
   author={Hare, D. E. G.},
   title={Computing the principal branch of log-Gamma},
   journal={J. Algorithms},
   volume={25},
   date={1997},
   number={2},
   pages={221--236},
   issn={0196-6774},
   review={\MR{1478568}},
   doi={10.1006/jagm.1997.0881},
 }
% \bib{LMFDB}{webpage}{
%  author       = {The {LMFDB Collaboration}},
%  title        =  {The L-functions and Modular Forms Database,  {\em Home page of the L-function $L(s,\chi_{971}(423,\cdot))$.}},
%  address  = {http://www.lmfdb.org/L/Character/Dirichlet/971/423/},
 % date         = {accessed 1 March 2019},
%}
 \bib{McCurley}{article}{
   author={McCurley, K. S.},
   title={Explicit estimates for the error term in the prime number theorem
   for arithmetic progressions},
   journal={Math. Comp.},
   volume={42},
   date={1984},
   number={165},
   pages={265--285},
   issn={0025-5718},
   review={\MR{726004}},
   doi={10.2307/2007579},
 }
 \bib{PARI2}{article}{
    author={The PARI~Group}, 
    title={PARI/GP version {\tt 2.11.0}}, 
    journal={Univ. Bordeaux},
    date={2018},
    eprint={http://pari.math.u-bordeaux.fr/}
    }
    
    \bib{Platt}{article}{
    author={Platt, D. J.},
   title={Numerical computations concerning the GRH},
   journal={Math. Comp.},
   volume={85},
   date={2016},
   pages={3009--3027},
%   issn={0025-5874},
 %  review={\MR{0117200}},
   doi={10.1090/mcom/3077},
 }

 \bib{Rademacher}{article}{
   author={Rademacher, H.},
   title={On the Phragm\'{e}n-Lindel\"{o}f theorem and some applications},
   journal={Math. Z},
   volume={72},
   date={1959/1960},
   pages={192--204},
   issn={0025-5874},
   review={\MR{0117200}},
   doi={10.1007/BF01162949},
 }
 \bib{Selberg}{article}{
   author={Selberg, A.},
   title={Contributions to the theory of Dirichlet's $L$-functions},
   journal={Skr. Norske Vid. Akad. Oslo. I.},
   volume={1946},
   date={1946},
   number={3},
   pages={62},
   review={\MR{0022872}},
}
\bib{Topsoe}{article}{
	author={Tops\o e, F.},
	title={Some bounds for the logarithmic function},
	conference={
		title={Inequality theory and applications. Vol. 4},
	},
	book={
		publisher={Nova Sci. Publ., New York},
	},
	date={2007},
	pages={137--151},
	review={\MR{2349596}},
}

\bib{Trudgian0}{article}{
   author={Trudgian, T. S.},
   title={An improved upper bound for the argument of the Riemann zeta-function on the critical line II},
   journal={J. Number Theory},
   volume={134},
   date={2014},
  % number={293},
   pages={280--292},
  % issn={0025-5718},
   %review={\MR{3315515}},
  % doi={10.1090/S0025-5718-2014-02898-6},
 }
\bib{Trudgian}{article}{
   author={Trudgian, T. S.},
   title={An improved upper bound for the error in the zero-counting
   formulae for Dirichlet $L$-functions and Dedekind zeta-functions},
   journal={Math. Comp.},
   volume={84},
   date={2015},
   number={293},
   pages={1439--1450},
   issn={0025-5718},
   review={\MR{3315515}},
   doi={10.1090/S0025-5718-2014-02898-6},
 }
\end{biblist}
\end{bibdiv}

\end{document}

